\newtheorem{defi}{Definition}[section]
\newtheorem{thm}[defi]{Theorem}
\newtheorem{prop}[defi]{Proposition}
\newtheorem{lem}[defi]{Lemma}
\newtheorem{cor}[defi]{Corollary}
\newtheorem{eg}[defi]{Example}
\newtheorem{rem}[defi]{Remark}
\newtheorem{conj}[defi]{Conjecture}
\DeclareMathOperator{\Art}{(Art)}
\DeclareMathOperator{\Aut}{Aut}
\DeclareMathOperator{\CC}{\mathbb{C}}
\DeclareMathOperator{\codim}{codim}
\DeclareMathOperator{\coh}{coh}
\DeclareMathOperator{\D}{D^b}
\DeclareMathOperator{\Def}{Def}
\DeclareMathOperator{\depth}{depth}
\DeclareMathOperator{\Der}{Der}
\DeclareMathOperator{\EE}{\mathcal{E}}
\DeclareMathOperator{\End}{End}
\DeclareMathOperator{\EEnd}{\mathcal{E}\textit{nd}}
\DeclareMathOperator{\exc}{exc}
\DeclareMathOperator{\Ext}{Ext}
\DeclareMathOperator{\GL}{GL}
\DeclareMathOperator{\Gm}{\mathbb{G}_{m}}
\DeclareMathOperator{\Gr}{Gr}
\DeclareMathOperator{\HH}{HH}
\DeclareMathOperator{\Hilb}{Hilb}
\DeclareMathOperator{\Hom}{Hom}
\newcommand{\id}{\mathrm{id}}
\DeclareMathOperator{\Ker}{Ker}
\DeclareMathOperator{\modu}{mod}
\DeclareMathOperator{\Perf}{Perf}
\DeclareMathOperator{\PP}{\mathbb{P}}
\DeclareMathOperator{\Qcoh}{Qcoh}
\DeclareMathOperator{\RGamma}{R\Gamma}
\DeclareMathOperator{\RHom}{RHom}
\newcommand{\sh}{\mathcal{F}}
\DeclareMathOperator{\Sh}{Sh}
\DeclareMathOperator{\Sing}{Sing}
\DeclareMathOperator{\Spec}{Spec}
\DeclareMathOperator{\Supp}{Supp}
\DeclareMathOperator{\Sym}{Sym}
\newcommand{\stsh}{\mathcal{O}}
\title[deformation of tilting-type derived equivalences]{deformation of tilting-type derived equivalences for crepant resolutions}
\author{wahei hara}
\address{Department of Mathematics, School of Science and Engineering, Waseda University, 3-4-1 Ohkubo, Shinjuku, Tokyo 169-8555, Japan}
\email{waheyhey@ruri.waseda.jp}
\subjclass[2010]{13D10, 14A22, 14B07, 14F05, 16S38}
\keywords{Derived category; Tilting bundle; Crepant resolution; Non-commutative crepant resolution; Deformation}
\date{}
\begin{document}

\begin{abstract}
We say that an exact equivalence  between the derived categories of two algebraic varieties is \textit{tilting-type} if it is constructed by using tilting bundles.
The aim of this article is to understand the behavior of tilting-type equivalences for crepant resolutions under deformations.
As an application of the method that we establish in this article, we study the derived equivalence for stratified Mukai flops and stratified Atiyah flops
in terms of tilting bundles.
\end{abstract}

\maketitle

\tableofcontents

\section{Introduction}

\subsection{Background}
Let $X = \Spec R$ be a normal Gorenstein affine variety and assume that $X$ admits a crepant resolution.
Although $X$ may have some different crepant resolutions, the following ``uniqueness" is expected.

\begin{conj}[Bondal-Orlov]
Let $\phi : Y \to X$ and $\phi' : Y' \to X$ be two crepant resolutions of $X$.
Then $Y$ and $Y'$ are derived equivalent to each other, i.e. there exists an exact equivalence
\[ \Phi : \D(Y) \xrightarrow{\sim} \D(Y'). \]
\end{conj}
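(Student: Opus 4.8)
The plan is to reduce the conjecture to a statement about tilting bundles and their endomorphism algebras, and then to transport the resulting equivalence from a tractable fibre to the general situation by a deformation argument---precisely the \emph{tilting-type} technique developed in this article. So first I would seek a tilting bundle $T$ on $Y$ and a tilting bundle $T'$ on $Y'$. Because $\phi$ and $\phi'$ are crepant and $X=\Spec R$ is affine and Gorenstein, one expects $\Lambda:=\End_Y(T)$ and $\Lambda':=\End_{Y'}(T')$ to be non-commutative crepant resolutions of $R$ in the sense of Van den Bergh, with
\[
\RHom_Y(T,-)\colon \D(Y)\xrightarrow{\ \sim\ }\D(\Lambda),\qquad
\RHom_{Y'}(T',-)\colon \D(Y')\xrightarrow{\ \sim\ }\D(\Lambda').
\]
The crux of the non-deformation part is then to identify $\Lambda$ and $\Lambda'$ up to (derived) Morita equivalence: when $Y$ and $Y'$ are related by a flop one realises both $T$ and $T'$ inside a single NCCR and passes between them by a mutation of modules, or equivalently by a Fourier--Mukai kernel supported on the fibre product $Y\times_X Y'$, which produces $\Phi\colon\D(Y)\xrightarrow{\sim}\D(Y')$ directly.

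For the deformation step I would spread the whole configuration out over a base: choose a (formal or one-parameter) deformation $\mathcal{X}\to\Spec k[[t]]$ of $X$ together with crepant resolutions $\mathcal{Y},\mathcal{Y}'$ restricting to $Y,Y'$ on the closed fibre, and show---using that the tilting property is stable under deformation once the relevant $\Ext^{>0}$-vanishing has been checked on the closed fibre, together with completeness/formal GAGA---that $T$ and $T'$ lift to tilting bundles on $\mathcal{Y},\mathcal{Y}'$. Then the isomorphism (or derived equivalence) between the endomorphism algebras obtained on the closed fibre propagates over the base, and hence specialises to every other fibre, yielding the desired equivalence for deformed resolutions that are not accessible by a direct construction. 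This is the mechanism that lets one reduce a hard global comparison to a more manageable one near a convenient stratum, and it is exactly what one would apply to stratified Mukai and Atiyah flops.

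The principal obstacle is the very first step: the existence of a tilting bundle on an arbitrary crepant resolution is itself open, so in full generality this argument only proves the conjecture \emph{conditionally}, reducing it to the existence of tilting bundles plus the identification of the two NCCRs. Even granting existence, matching $\Lambda$ with $\Lambda'$ across a general birational modification (rather than a single Atiyah-type flop) forces a stratum-by-stratum analysis of how the tilting bundle changes, and the deformation argument must be arranged so that the needed $\Ext$-vanishing holds on \emph{every} fibre, not merely the generic one; securing this uniform control---so that tilting, and not just its derived consequences, survives the limit---is the technical heart of the approach.
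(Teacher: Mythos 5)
The statement you are trying to prove is stated in the paper as a \emph{conjecture} (due to Bondal--Orlov), and the paper gives no proof of it; it remains open in general. The paper's actual results are partial: lifting of tilting-type equivalences along infinitesimal, complete-local, and $\Gm$-equivariant deformations under codimension hypotheses on the singular or exceptional locus, applied to stratified Mukai/Atiyah flops. So there is no ``paper's own proof'' to compare against, and your proposal cannot be judged correct as a proof of the conjecture.

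Your write-up, to its credit, identifies exactly why it is not a proof: the very first step --- the existence of a tilting bundle on an arbitrary crepant resolution $Y\to X$ --- is itself an open problem, and even granting it, the identification of $\End_Y(T)$ with $\End_{Y'}(T')$ (or their derived Morita equivalence) for a general pair of crepant resolutions is not established. Everything after that is conditional. Moreover, the deformation mechanism you describe runs in the direction the paper actually uses it --- transporting an equivalence already constructed on a special fibre to nearby fibres --- which presupposes that the conjecture has been settled somewhere in the family; it cannot bootstrap the existence of the initial equivalence. What you have written is an accurate summary of the strategy this paper develops and of the known reductions (tilting bundles, NCCRs, Van den Bergh's threefold results, Kaledin's symplectic case), but as a proof of the conjecture it has an irreparable gap at the outset, which you yourself name. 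If the exercise was to prove the statement, the honest answer is that no proof is currently available, and the paper does not claim one.
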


For given two crepant resolutions $Y$ and $Y'$ of $X$, there are various methods to construct a derived equivalence between $\D(Y)$ and $\D(Y')$
(eg. Fourier-Mukai transform \cite{Bri02, Cau12, Kaw02, Kaw05},
variation of geometric invariant theory (=VGIT) \cite{HL16},
or mutation of semi-orthogonal decomposition \cite{Ued16}).
In this article, we deal with the one using tilting bundles.
A vector bundle $E$ on a scheme $Z$ is called \textit{tilting bundle}
if $\Ext_Z^i(E,E) = 0$ for $i \neq 0$ and if $E$ is a generator of the category $\mathrm{D}^-(\Qcoh(Z))$.
It is well-known that, by using tilting bundles, we can construct equivalences of categories:

\begin{lem}[See also Proposition \ref{tilting equiv}] \label{lem intro}
If there are tilting bundles $T$ and $T'$ on $Y$ and $Y'$, respectively, with an $R$-algebra isomorphism
\[ \End_{Y}(T) \simeq \End_{Y'}(T'), \]
then we have equivalences of derived categories
\[ \D(Y) \simeq \D(\End_{Y}(T)) \simeq  \D(\End_{Y'}(T')) \simeq \D(Y'). \]
\end{lem}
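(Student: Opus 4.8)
The plan is to deduce the statement from the standard tilting equivalence of Proposition \ref{tilting equiv}, so that essentially all that remains is to splice three equivalences together. First I would record that a crepant resolution $\phi : Y \to X = \Spec R$ is projective over the Noetherian affine scheme $X$, so that $Y$ is a Noetherian scheme of finite Krull dimension on which $T$ is a tilting bundle, and $\End_Y(T)$ is a module-finite $R$-algebra; the same applies to $Y'$, $T'$ and $\End_{Y'}(T')$. Granting the hypotheses of Proposition \ref{tilting equiv} --- in particular that $T$ is a generator of $\mathrm{D}^-(\Qcoh(Y))$ with $\Ext^{i}_Y(T,T) = 0$ for $i \neq 0$ --- the functor $\RHom_Y(T, -) : \D(Y) \to \D(\End_Y(T))$ is an exact equivalence with quasi-inverse $(-) \otimes^{\mathbf{L}}_{\End_Y(T)} T$, and likewise $\RHom_{Y'}(T', -) : \D(Y') \to \D(\End_{Y'}(T'))$ is an exact equivalence.

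Next I would turn the given $R$-algebra isomorphism $f : \End_Y(T) \xrightarrow{\sim} \End_{Y'}(T')$ into an equivalence of derived module categories: restriction of scalars along $f$ is an exact equivalence $\modu \End_{Y'}(T') \xrightarrow{\sim} \modu \End_Y(T)$, with inverse restriction of scalars along $f^{-1}$, and being exact it induces an exact equivalence $\D(\End_{Y'}(T')) \xrightarrow{\sim} \D(\End_Y(T))$. Composing this with the two tilting equivalences above (inverting the latter two) produces the chain
\[ \D(Y) \simeq \D(\End_Y(T)) \simeq \D(\End_{Y'}(T')) \simeq \D(Y'), \]
as desired; since every functor involved is $R$-linear, the composite is in fact an $R$-linear exact equivalence.

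The only genuinely substantive point is the input from Proposition \ref{tilting equiv}, namely that a tilting bundle really does induce a derived equivalence; the subtleties there are the usual ones --- matching up $\mathrm{D}^-(\Qcoh(Y))$, the unbounded derived category, and the bounded categories $\D(Y)$ and $\D(\End_Y(T))$ --- for which one uses that $Y$ is smooth, so that $\End_Y(T)$ has finite global dimension. Within the lemma itself I anticipate no obstacle beyond citing that proposition and checking the (routine) $R$-linearity of the functors in the composite.
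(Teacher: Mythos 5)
Your proposal is correct and follows the same route the paper takes: the paper treats this lemma as an immediate consequence of Proposition \ref{tilting equiv} applied to $(Y,T)$ and $(Y',T')$, glued along the given $R$-algebra isomorphism via restriction of scalars, exactly as you describe. The only substantive input is the cited tilting equivalence (including its restriction to bounded derived categories), and you have identified that correctly.
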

In this article, we call a derived equivalence constructed in this way \textit{tilting-type}.
We say that a tilting-type equivalence $\D(Y) \xrightarrow{\sim} \D(Y')$ is \textit{strict} if the tilting bundles $T$ and $T'$ coincide with each other on the largest common
open subset $U$ of $X$, $Y$ and $Y'$.
We also say that a tilting-type equivalence is \textit{good} if tilting bundles contain trivial line bundles as direct summands.
Note that these terminologies are ad hoc.
Tilting-type equivalences that are good and strict constitute an important class of derived equivalences.
For example, it is known that if $X$ has only threefold terminal singularities, then there is a derived equivalence between $Y$ and $Y'$ that can be written as a composition of good and strict tilting-type equivalences.
Indeed, since two crepant resolutions are connected by iterating flops, this fact follows from the result of Van den Bergh \cite{VdB04a}.
In addition, tilting-type equivalences for crepant resolutions have a strong relationship with the theory of non-commutative crepant resolutions, which was first introduced by Van den Bergh \cite{VdB04b}.

On the other hand, taking \textit{a deformation of an algebraic variety} is one of standard methods to construct a new variety from the original one,
and is studied in many branches of algebraic geometry.
Taking deformations is also an important operation in Mirror Symmetry.
According to Homological Mirror Symmetry, the derived categories of algebraic varieties are quite significant objects in the study of Mirror Symmetry.
The aim of this article is to understand the behavior of (good or strict) tilting-type equivalences under deformations.

\subsection{Results}
Let $X_0$ be a normal Gorenstein affine variety, and let $\phi_0 : Y_0 \to X_0$ and $\phi'_0 : Y'_0 \to X_0$ be two crepant resolutions of $X_0$.
In this article, we deal with three types of deformations:
infinitesimal deformation, deformation over a complete local ring, and deformation with a $\Gm$-action.

\vspace{3mm}

\noindent
\textbf{$\bullet$ Infinitesimal deformation.}
First we study infinitesimal deformations of small resolutions.
Assume that
\[ \codim_{Y_0}(\exc(\phi_0)) \geq 3 ~ \text{and} ~  \codim_{Y'_0}(\exc(\phi'_0)) \geq 3. \]
Then we show that there are canonical isomorphisms of deformation functors
\[ \Def Y_0 \simeq  \Def X_0 \simeq \Def Y'_0 \]
(Proposition {prop 3-1}).
Let $A$ be a local Artinian algebra with residue field $\CC$,
%and choose an element
%\[ \xi \in (\Def X_0)(A) =  (\Def Y_0)(A) = (\Def Y'_0)(A). \]
%Let $Y$ (resp. $Y'$) be the infinitesimal deformation of $Y_0$ (resp. $Y'_0$) over $A$ corresponding to $\xi$.
and let $Y$ and $Y'$ be deformations of $Y_0$ and $Y'_0$ over $A$ which correspond to each other under the above isomorphisms.
Then we show the following.

\begin{thm}[= Theorem \ref{inf def main thm}] \label{intro thm inf}
Under the notations and assumptions above, any strict tilting-type equivalence between $\D(Y_0)$ and $\D(Y'_0)$ lifts to a strict tilting-type equivalence between $\D(Y)$ and $\D(Y')$.
\end{thm}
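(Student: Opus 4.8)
The plan is to lift the two tilting bundles that define the given equivalence and then to pin down their restrictions to the common open locus. Under the isomorphisms $\Def Y_0\simeq\Def X_0\simeq\Def Y'_0$, let $X$ be the deformation of $X_0$ over $A$ corresponding to $Y$ (equivalently to $Y'$), so $X=\Spec R$ for a flat $A$-algebra $R$ with $R\otimes_A\mathbb{C}=R_0$. Write $U_0$ for the largest common open subscheme of $X_0$, $Y_0$ and $Y'_0$; the hypotheses ensure that the complements of $U_0$ in $Y_0$ and in $Y'_0$ (and in $X_0$) have codimension $\geq 3$, and under the deformation functor isomorphisms the three induced deformations of $U_0$ agree; denote the common one by $U$, an open subscheme of $X$, of $Y$ and of $Y'$. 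The given strict tilting-type equivalence is recorded by tilting bundles $T_0$ on $Y_0$ and $T'_0$ on $Y'_0$ together with an isomorphism $\alpha_0\colon T_0|_{U_0}\xrightarrow{\sim}T'_0|_{U_0}$, the attendant $R_0$-algebra isomorphism $\End_{Y_0}(T_0)\simeq\End_{Y'_0}(T'_0)$ being the one obtained from $\alpha_0$ by pushing $\EEnd$ forward along $U_0\hookrightarrow Y_0$ and $U_0\hookrightarrow Y'_0$ (legitimate since $\codim\geq 2$ and $Y_0,Y'_0$ are normal). I would: (i) lift $T_0,T'_0$ to tilting bundles $T$ on $Y$ and $T'$ on $Y'$; (ii) prove $T|_U\simeq T'|_U$, compatibly with $\alpha_0$; (iii) push $\EEnd$ forward from $U$ to obtain an $R$-algebra isomorphism $\End_Y(T)\simeq\End_{Y'}(T')$ and conclude with Proposition \ref{tilting equiv}.

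For (i): since $T_0$ is a tilting bundle on the smooth variety $Y_0$ we have $\Ext^i_{Y_0}(T_0,T_0)=0$ for $i\neq 0$. Writing $A$ as an iterated small extension of $\mathbb{C}$ and lifting one step at a time, the obstruction to lifting a vector bundle along a small extension with kernel $I$ lies in $\Ext^2_{Y_0}(T_0,T_0)\otimes_{\mathbb{C}}I=0$, and two lifts differ by an element of $\Ext^1_{Y_0}(T_0,T_0)\otimes_{\mathbb{C}}I=0$; hence $T_0$ lifts to a vector bundle $T$ on $Y$, unique up to isomorphism. That $T$ is again tilting I would check by base change: $T$ is perfect and $Y\to\Spec R$ is proper with $R$ flat over $A$, so $\RHom_Y(T,T)\otimes_A^{\mathbf{L}}\mathbb{C}\simeq\RHom_{Y_0}(T_0,T_0)$, which is concentrated in degree $0$; derived Nakayama over the Artinian ring $A$ then forces $\RHom_Y(T,T)$ to be concentrated in degree $0$, with $\End_Y(T)$ free over $A$ and $\End_Y(T)\otimes_A\mathbb{C}\simeq\End_{Y_0}(T_0)$. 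Likewise, if $\RHom_Y(T,C)=0$ for some $C\in\mathrm{D}^-(\Qcoh Y)$, the same base change gives $\RHom_{Y_0}(T_0,C\otimes_A^{\mathbf{L}}\mathbb{C})=0$, hence $C\otimes_A^{\mathbf{L}}\mathbb{C}=0$ because $T_0$ is a generator, and then $C=0$ by Nakayama; so $T$ generates. The same reasoning yields a tilting bundle $T'$ on $Y'$ lifting $T'_0$.

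Step (ii) is the crux, and the one place where the hypothesis $\codim\geq 3$ is genuinely needed (the weaker bound $\geq 2$ is already absorbed into the deformation functors). Put $E_0:=T_0|_{U_0}$; then $T|_U$ and $T'|_U$ are both lifts of $E_0$ over the common deformation $U$, and such lifts are classified up to isomorphism by the usual deformation theory, with obstruction in $\Ext^2_{U_0}(E_0,E_0)$ and ambiguity in $\Ext^1_{U_0}(E_0,E_0)$. The point is that the latter group vanishes: $\EEnd_{Y_0}(T_0)$ is locally free on the smooth, hence Cohen--Macaulay, variety $Y_0$, so it has depth $\geq 3$ along $Z_0:=Y_0\setminus U_0$, whence $H^i_{Z_0}(\EEnd_{Y_0}(T_0))=0$ for $i\leq 2$; the local-cohomology exact sequence then gives $\Ext^1_{U_0}(E_0,E_0)=H^1(U_0,\EEnd_{U_0}(E_0))\simeq H^1(Y_0,\EEnd_{Y_0}(T_0))=\Ext^1_{Y_0}(T_0,T_0)=0$. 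Hence the lift of $E_0$ over $U$ is unique up to isomorphism, so $T|_U\simeq T'|_U$; moreover the obstruction to choosing this isomorphism so that it reduces to $\alpha_0$ on the central fibre again lives in a group built from $\Ext^1_{U_0}(E_0,E_0)=0$, so one may arrange that.

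Finally, for (iii): running the same depth computation over $A$ — filtering $\EEnd_Y(T)$ by the powers of the maximal ideal of $A$, whose successive quotients are finite direct sums of $\EEnd_{Y_0}(T_0)$ — shows that restriction induces $R$-algebra isomorphisms $\End_Y(T)=\Gamma(Y,\EEnd_Y(T))\xrightarrow{\sim}\Gamma(U,\EEnd(T|_U))$ and $\End_{Y'}(T')\xrightarrow{\sim}\Gamma(U,\EEnd(T'|_U))$; composing these with the isomorphism of $\EEnd$'s coming from (ii) produces an $R$-algebra isomorphism $\End_Y(T)\simeq\End_{Y'}(T')$ that reduces modulo the maximal ideal of $A$ to the prescribed one. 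Proposition \ref{tilting equiv} then yields $\D(Y)\simeq\D(\End_Y(T))\simeq\D(\End_{Y'}(T'))\simeq\D(Y')$, which is a tilting-type equivalence, strict by (ii), and reducing modulo the maximal ideal of $A$ to the original one. I expect the main obstacle to be precisely Step (ii): a priori the lifts $T$ and $T'$ are unique only over $Y$ and over $Y'$ separately, and it is the codimension hypothesis, through the vanishing of $\Ext^1$ on $U_0$, that forces their restrictions to $U$ to coincide.
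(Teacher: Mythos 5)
Your proposal is correct and follows essentially the same route as the paper: lift $T_0$ and $T'_0$ uniquely (the paper cites Karmazyn's result, Proposition \ref{Kar1}, where you redo the obstruction-theory and derived-Nakayama argument), then show the two restrictions to $U$ agree because the lift of $T_0|_{U_0}$ over $U$ is unique, the key vanishing $H^1(U_0,\EEnd_{U_0}(T_0|_{U_0}))=0$ coming from $\codim\geq 3$ exactly as in the paper's Proposition \ref{prop 3-key1} (you derive it from depth and Corollary \ref{loc coho cor2}, the paper from the Leray spectral sequence for $j_*$ — the same computation). Your extra care in step (ii)--(iii) about compatibility with $\alpha_0$ and with the induced algebra isomorphism is a point the paper leaves implicit, and it is handled correctly.
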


See Definition \ref{def lift tilting-type} for the precise meaning of the word \textit{lift}.

\vspace{3mm}
\noindent
\textbf{$\bullet$ Complete local or $\Gm$-equivariant deformation.}
We also study deformations over a complete local ring and deformations with $\Gm$-actions.
Let $X_0$, $Y_0$, $Y'_0$, $\phi_0$ and $\phi'_0$ as above.
(Note that we do NOT assume the condition for the codimension of the exceptional locus here.)
Consider a deformation of them
\[ \begin{tikzcd}
Y \arrow[r, "\phi"] \arrow[rd, "p"'] & X \arrow[d, "q"] & Y' \arrow[l, "\phi'"'] \arrow[ld, "p'"] \\
& (\Spec D, d) &
\end{tikzcd} \]
over a pointed affine scheme $(\Spec D, d)$, where $\phi$ and $\phi'$ are projective morphisms and $X = \Spec R$ is an affine scheme.
Assume that an inequality
\[ \codim_{X_0} \Sing(X_0) \geq 3 \]
holds and that one of the following conditions holds.
\begin{enumerate}
\item[(a)] $D$ is a complete local ring and $d \subset D$ is the maximal ideal.
\item[(b)] $X$, $Y$ and $Y'$ are $\Gm$-varieties, $\phi$ and $\phi'$ are $\Gm$-equivariant, and the action of $\Gm$ on $X$ is good.
For a unique $\Gm$-fixed point $x \in X$, we have $d = q(x)$.
\end{enumerate}
See Definition \ref{def good action} for the definition of good $\Gm$-actions.
Then we have a similar theorem as in the case of infinitesimal deformations.

\begin{thm}[= Theorem \ref{thm formal def}, \ref{main thm equiv}] \label{intro thm complete and equivariant}
Under the conditions above, any good and strict tilting-type equivalence between $\D(Y_0)$ and $\D(Y'_0)$
lifts to  a good tilting-type equivalence between $\D(Y)$ and $\D(Y')$.
\end{thm}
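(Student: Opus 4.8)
The plan is to reformulate the statement as a deformation problem for reflexive modules over $R$, to solve that problem over the largest common open subset where it becomes a deformation problem for a rigid vector bundle, and then to propagate the solution across a locus of codimension $\geq 3$; in case (a) an extra algebraization step over the complete local base is needed, and in case (b) the good $\Gm$-action plays that role. \textbf{Reduction to modules.} First I would replace the given good strict tilting-type equivalence by its module-theoretic shadow. Let $T_0$ and $T'_0$ be the underlying tilting bundles on $Y_0$ and $Y'_0$, with the fixed $R_0$-algebra isomorphism $\Lambda_0 := \End_{Y_0}(T_0) \xrightarrow{\ \sim\ } \End_{Y'_0}(T'_0)$. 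Since $\codim_{X_0}\Sing(X_0)\geq 3$, the sheaves $M_0 := (\phi_0)_\ast T_0$ and $M'_0 := (\phi'_0)_\ast T'_0$ are reflexive $R_0$-modules with $\End_{R_0}(M_0)\simeq\Lambda_0\simeq\End_{R_0}(M'_0)$; goodness says $R_0$ is a direct summand of each, and strictness says $M_0$ and $M'_0$ restrict to the same bundle on the largest common open set. By Proposition \ref{tilting equiv} it then suffices to produce reflexive $R$-modules $M$ and $M'$, flat over $D$ with central fibres $M_0$ and $M'_0$, each containing $R$ as a direct summand, together with an $R$-algebra isomorphism $\End_R(M)\simeq\End_R(M')$ lifting the given one, such that the objects of $\D(Y)$ and $\D(Y')$ corresponding to $M$ and $M'$ are tilting bundles.

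\textbf{Deformation on the common locus and extension.} Let $U\subseteq X$ be the largest open subscheme shared by $Y$, $Y'$ and $X$; by hypothesis the complement of its central fibre has codimension $\geq 3$ in $X_0$. On $U$ the module $M_0|_{U_0}=M'_0|_{U_0}$ is a vector bundle, and it is rigid because $T_0$ is tilting (so $\Ext^1(M_0|_{U_0},M_0|_{U_0})=0$) and, using the codimension bound, unobstructed; hence it admits a unique deformation to a bundle $M_U$ on $U$. Because $X$ is Gorenstein (a flat deformation of the Gorenstein $X_0$), hence Cohen--Macaulay, and because $X\setminus U$ has codimension $\geq 3$, the reflexive hull $M := j_\ast M_U$ along $j\colon U\hookrightarrow X$ is a finitely generated reflexive $R$-module, flat over $D$, with central fibre $M_0$, and the summand $\stsh_U\subseteq M_U$ extends to a summand $R\subseteq M$; the same construction applied to $Y'$ produces $M'$. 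Since $M_U$ and $M'_U$ are canonically identified on $U$, taking reflexive hulls also extends that identification together with the algebra structures, yielding $\End_R(M)\simeq\End_R(M')$ over the chosen lift of $\Lambda_0$.

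\textbf{Recovering tilting bundles and conclusion.} It remains to see that $M$ and $M'$ are push-forwards of tilting bundles. Since $\Lambda_0=\End_{R_0}(M_0)$ is a non-commutative crepant resolution of $R_0$ of finite global dimension, its deformation $\End_R(M)$ is again module-finite over $R$ and of finite global dimension, and combined with $\codim_{X_0}\Sing(X_0)\geq 3$ this puts us in the range of the tilting-bundle construction of Van den Bergh \cite{VdB04b} recalled earlier in the article: it yields a tilting bundle $T$ on $Y$ with $\phi_\ast T\simeq M$ and $\End_Y(T)\simeq\End_R(M)$, and likewise $T'$ on $Y'$; the vanishing $\Ext^i_Y(T,T)=0$ for $i\neq 0$ is verified after restriction to $U$ using the depth estimate coming from the codimension hypothesis, and goodness of $T$, $T'$ is inherited from the summands $R\subseteq M,M'$. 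Applying Lemma \ref{lem intro} to $T$, $T'$ and $\End_Y(T)\simeq\End_R(M)\simeq\End_R(M')\simeq\End_{Y'}(T')$ produces the desired good tilting-type equivalence $\D(Y)\xrightarrow{\sim}\D(Y')$ lifting the original one. In case (a) the one additional point is the passage from the compatible infinitesimal lifts to honest data over the complete local base; since $\phi\colon Y\to X$ is projective this is handled by formal functions and Grothendieck existence, as carried out in Theorem \ref{thm formal def}. In case (b) the good $\Gm$-action lets one reduce to a neighbourhood of the unique fixed point and run the same argument equivariantly, as in Theorem \ref{main thm equiv}.

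\textbf{Main obstacle.} The hard part is the deformation--extension step together with the algebraization in case (a): one must guarantee that the reflexive hull of the deformed bundle on $U$ stays flat over $D$ with unchanged central fibre and with endomorphism algebra of the expected size and homological dimension, and --- because $Y\to\Spec D$ is not proper --- one cannot invoke Grothendieck existence directly on $Y$. It is precisely the hypotheses $\codim_{X_0}\Sing(X_0)\geq 3$ and goodness that make the detour through reflexive modules on the affine $X$, which are determined by their codimension-$2$ behaviour, a workable substitute for the condition $\codim\exc\geq 3$ used in the infinitesimal case.
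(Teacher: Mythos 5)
Your overall strategy---push the tilting bundles down to reflexive modules on $X$, use goodness to get Cohen--Macaulayness and $\codim_{X_0}\Sing(X_0)\geq 3$ to get rigidity, and let the unique deformation of the module force the two sides to agree---is the same mechanism the paper uses, and your identification of where each hypothesis enters is accurate. However, there is a genuine gap at the step ``recovering tilting bundles.'' You propose to produce the tilting bundle $T$ on $Y$ \emph{from} the deformed reflexive module $M$ on $X$ via ``the tilting-bundle construction of Van den Bergh.'' No such construction is available here: the implication established in the paper (Lemma on NCCRs) goes from a tilting bundle on a crepant resolution to an NCCR, not the reverse, and passing from a module $M$ with $\End_R(M)$ of finite global dimension back to a vector bundle on a \emph{prescribed} resolution $Y$ with $\phi_*T\simeq M$ is precisely the hard direction. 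The paper avoids this entirely: it lifts $T_0$ and $T'_0$ directly on the geometric side, order by order over $D_n=D/d^{n+1}$, using Karmazyn's theorem that a partial tilting bundle admits a unique lift under an infinitesimal deformation (this is where existence comes from, since $\Ext^1_{Y_0}(T_0,T_0)=\Ext^2_{Y_0}(T_0,T_0)=0$ on all of $Y_0$, not just on $U_0$), and only then uses rigidity of $M_0=(\phi_0)_*T_0$ to conclude $(\phi_n)_*T_n\simeq(\phi'_n)_*T'_n$ and hence $\End_{Y_n}(T_n)\simeq\End_{Y'_n}(T'_n)$. Relatedly, your construction of $M$ as $j_*M_U$ leaves unproved both the existence of the deformation $M_U$ on $U$ (the obstruction space $H^2(U_0,\EEnd(M_0|_{U_0}))\simeq H^3_Z(X_0,\EEnd(M_0))$ has no reason to vanish under the stated depth bounds) and the flatness of $j_*M_U$ over $D$ with the correct central fibre; the paper gets flatness of the pushforward from $R\phi_{n*}T_n=\phi_{n*}T_n$ instead.

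The second gap is case (b). Saying that the good $\Gm$-action ``lets one reduce to a neighbourhood of the unique fixed point and run the same argument equivariantly'' skips the actual content of Theorem \ref{main thm equiv}: after applying the complete-local result to $\widehat{X}=\Spec\widehat{R}$ one holds tilting bundles only on $\widehat{Y}$ and $\widehat{Y}'$, and one must descend them to $Y$ and $Y'$ themselves. This is done by Kaledin's theorem that every tilting bundle on $\widehat{Y}$ is pulled back from a $\Gm$-equivariant tilting bundle on $Y$, and the delicate point---occupying most of the paper's proof---is showing that the induced $\Gm$-equivariant structures on $\widehat{\phi}_*\widehat{T}$ and $\widehat{\phi}'_*\widehat{T}'$ can be chosen to agree under the isomorphism $\widehat{\phi}_*\widehat{T}\simeq\widehat{\phi}'_*\widehat{T}'$ (this uses that the ambiguity in the equivariant structure is controlled by $\End_{\widehat{Y}}(\widehat{T})\simeq\End_{\widehat{Y}'}(\widehat{T}')$), so that Kaledin's completion equivalence for $\Gm$-equivariant modules yields $\phi_*T\simeq\phi'_*T'$ and $\End_Y(T)\simeq\End_{Y'}(T')$ over $R$ itself. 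Without this step the conclusion is only obtained over the completion, not over $Y$ and $Y'$.
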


We note that we cannot generalize this theorem directly to the case when the codimension of the singular locus is two
(see Section \ref{sect Eg counter}).
As a direct corollary of the theorem above, we have the following.

\begin{cor}[= Corollary \ref{cor equiv defo fiber}]
Under the condition (b) above, assume that there exists a good and strict tilting-type equivalence between $\D(Y_0)$ and $\D(Y'_0)$.
Then, for any closed point $t \in \Spec D$, there is a good tilting-type equivalence between $\D(p^{-1}(t))$ and $\D(p'^{-1}(t))$.
\end{cor}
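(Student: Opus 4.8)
The plan is to deduce the statement from Theorem~\ref{main thm equiv} by passing to the fibre over $t$. In case~(b), Theorem~\ref{main thm equiv} lifts the given good and strict equivalence $\D(Y_0)\xrightarrow{\sim}\D(Y'_0)$ to a good tilting-type equivalence $\D(Y)\xrightarrow{\sim}\D(Y')$; by Definition~\ref{def lift tilting-type} this means there are tilting bundles $T$ on $Y$ and $T'$ on $Y'$, each containing $\stsh$ as a direct summand, an $R$-algebra isomorphism $\theta\colon\End_Y(T)\xrightarrow{\sim}\End_{Y'}(T')$, and in addition the restrictions $T|_{Y_0}$, $T'|_{Y'_0}$ are tilting bundles on $Y_0$, $Y'_0$ inducing the original equivalence. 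Fix a closed point $t\in\Spec D$, write $\kappa=\kappa(t)$, $Y_t=p^{-1}(t)$, $Y'_t=p'^{-1}(t)$, $X_t=q^{-1}(t)=\Spec R_t$ with $R_t=R\otimes_D\kappa$, and let $\phi_t\colon Y_t\to X_t$, $\phi'_t\colon Y'_t\to X_t$ be the induced projective morphisms. Set $T_t=T|_{Y_t}$ and $T'_t=T'|_{Y'_t}$; since $T$, $T'$ are locally free and $Y$, $Y'$ are flat over $D$, these are vector bundles still containing $\stsh$ as a direct summand.

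The heart of the matter is that $T_t$ and $T'_t$ are again tilting bundles, with $\End_{Y_t}(T_t)\simeq\End_Y(T)\otimes_D\kappa$ and $\End_{Y'_t}(T'_t)\simeq\End_{Y'}(T')\otimes_D\kappa$ as $R_t$-algebras. For the vanishing of higher self-extensions and the endomorphism algebras one argues by cohomology and base change, exactly as in the proof of Theorem~\ref{main thm equiv}: since $X$ is affine and $\phi$ is projective, $\RRR\phi_*\EEnd_Y(T)$ is concentrated in degree $0$, a coherent sheaf of $\stsh_X$-algebras $\mathcal{A}$ with $\Gamma(X,\mathcal{A})=\End_Y(T)$, and the $\Gm$-equivariant (hence graded) structure together with a graded Nakayama argument shows that $\mathcal{A}$ is flat over $D$ --- this is where the good $\Gm$-action, rather than mere $D$-flatness of $\EEnd_Y(T)$, is used; derived base change along $X_t\hookrightarrow X$ for the proper morphism $\phi$ then gives $\Ext^{>0}_{Y_t}(T_t,T_t)=0$ and $\End_{Y_t}(T_t)\simeq\mathcal{A}\otimes_D\kappa\simeq\End_Y(T)\otimes_D\kappa$, and symmetrically for $T'_t$. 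It remains to check that $T_t$ generates $\D(Y_t)$. Here one again uses that the $\Gm$-action on $\Spec D$ is good: restrict the family over the $\Gm$-invariant curve $C=\overline{\Gm\cdot t}$, whose only $\Gm$-fixed point is $d$ and which is contained in every nonempty closed $\Gm$-invariant subset of itself. Over the single orbit $C\setminus\{d\}$ the family is $\Gm$-equivariantly a product, so the property ``the restriction of $T$ to the fibre is a tilting bundle'' is constant there, whereas over $d$ it holds because $T|_{Y_0}$ is a tilting bundle; since the locus of $c\in C$ over which this holds is open (upper semicontinuity for the $\Ext$-part, and the generation part is constant on the orbit) and $\Gm$-invariant and contains $d$, it must be all of $C$, so in particular $T_t$ is a tilting bundle on $Y_t$, and likewise $T'_t$ on $Y'_t$.

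Finally, $\theta$ is in particular a $D$-algebra isomorphism, so tensoring with $\kappa$ over $D$ and inserting the identifications above yields an isomorphism of $R_t$-algebras
\[
\End_{Y_t}(T_t)\;\simeq\;\End_Y(T)\otimes_D\kappa\;\xrightarrow{\theta\otimes_D\kappa}\;\End_{Y'}(T')\otimes_D\kappa\;\simeq\;\End_{Y'_t}(T'_t).
\]
By Proposition~\ref{tilting equiv} applied to the tilting bundles $T_t$ on $Y_t$ and $T'_t$ on $Y'_t$ we obtain equivalences $\D(Y_t)\simeq\D\bigl(\End_{Y_t}(T_t)\bigr)\simeq\D\bigl(\End_{Y'_t}(T'_t)\bigr)\simeq\D(Y'_t)$, i.e.\ a tilting-type equivalence between $\D(p^{-1}(t))$ and $\D(p'^{-1}(t))$, and it is good because $\stsh$ is a direct summand of each of $T_t$ and $T'_t$. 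The step I expect to be the main obstacle is the generator property of $T_t$ on a general fibre: restriction of a classical generator is not automatically a generator, so this genuinely relies on the good $\Gm$-action --- every fibre degenerates to the central one, where the tilting property is guaranteed by the lift --- whereas the $\Ext$-vanishing and the algebra identification reduce to cohomology and base change in the graded setting.
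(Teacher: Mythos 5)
Your overall route is the paper's: apply Theorem \ref{main thm equiv} to get tilting bundles $T$, $T'$ on $Y$, $Y'$ with isomorphic endomorphism algebras, restrict to the fibres over $t$, and use derived base change for the Cartesian square $p^{-1}(t)\subset Y \to X \supset q^{-1}(t)$ (Tor-independent because $Y$ and $X$ are flat over $D$) to identify $\Ext^*_{Y_t}(T_t,T_t)$ with $Li^*\End_Y(T)$. That identification already does more than you credit it for: the left-hand side lives in non-negative degrees and the right-hand side in non-positive degrees, so both are forced into degree $0$; this gives $\Ext^{>0}_{Y_t}(T_t,T_t)=0$ \emph{and} $\End_{Y_t}(T_t)\simeq\End_Y(T)\otimes_D\kappa(t)$ simultaneously, with no need for the separate ``graded Nakayama'' flatness argument for $R\phi_*\EEnd_Y(T)$ — the vanishing of the relevant Tor's is a consequence, not a prerequisite.

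The step you single out as the main obstacle, the generator property of $T_t$, is where your write-up actually has a problem. Your orbit-closure argument presupposes a $\Gm$-action on $\Spec D$ making $p$ equivariant and that $d$ lies in $\overline{\Gm\cdot t}$; neither is part of hypothesis (b), which only equips $X$, $Y$, $Y'$ with actions and makes $\phi$, $\phi'$ equivariant. Fortunately none of this is needed: if $F\in\mathrm{D}^-(\Qcoh(Y_t))$ satisfies $\RHom_{Y_t}(T_t,F)=0$, then since $T$ is locally free, adjunction gives $\RHom_{Y}(T,(i_Y)_*F)\simeq\RHom_{Y_t}(Li_Y^*T,F)=0$, hence $(i_Y)_*F=0$ because $T$ generates $\mathrm{D}^-(\Qcoh(Y))$, and hence $F=0$ because $i_Y$ is a closed (in particular affine) immersion. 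This is exactly the argument of Lemma \ref{lem loc tilt}, and it shows the restriction of a tilting bundle to any closed fibre is again a generator, with no equivariance input. Replacing your orbit argument by this adjunction argument makes the proof correct and brings it in line with the paper's; the good $\Gm$-action is consumed entirely in Theorem \ref{main thm equiv}, not in the passage to fibres.
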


%\vspace{3mm}
\noindent
\textbf{$\bullet$ Stratified Mukai flops and stratified Atiyah flops.}
As an application of the theorems above, we study derived equivalences for stratified Mukai flops and stratified Atiyah flops.
A \textit{stratified Mukai flop} on $\Gr(r,N)$ is a birational map $Y_0 \dashrightarrow Y'_0$ between the cotangent bundles $Y_0 := T^*\Gr(r,N)$ and $Y'_0 := T^*\Gr(N-r,N)$ of Grassmannian varieties, where $r$ is an integer with $2r \leq N -1$.
It is known that they have a natural one-parameter $\Gm$-equivariant deformation $Y \dashrightarrow Y'$ called \textit{stratified Atiyah flop} on $\Gr(r,N)$.
Note that a stratified Atiyah flop on $\Gr(r,N)$ is also defined in the case if $2r = N$
(see Section \ref{subsect; stratified flop} for more details).
Stratified Mukai flops and stratified Atiyah flops form a fundamental class of higher dimensional flops.

The method to construct an equivalence for stratified Mukai flops from an equivalence for stratified Atiyah flops is well-established (eg. \cite{Kaw02, Sze04}).
On the other hand, our theorem provides a method to construct a tilting-type equivalence for stratified Atiyah flops from a tilting-type equivalence for stratified Mukai flops.
More precisely:

\begin{cor}[= Theorem \ref{Mukai Atiyah lift}] \label{intro stratified}
Any good and strict tilting-type equivalence for the stratified Mukai flop on $\Gr(r,N)$ lifts to a good and strict tilting-type equivalence for the stratified Atiyah flop on $\Gr(r,N)$.
\end{cor}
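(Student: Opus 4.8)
The strategy is to realize the stratified Atiyah flop on $\Gr(r,N)$ as a $\Gm$-equivariant deformation of the stratified Mukai flop on $\Gr(r,N)$ satisfying the hypotheses of Theorem \ref{main thm equiv}, and then to record that the lift it produces is not merely good but also strict. Recall from Section \ref{subsect; stratified flop} that the affinization $X_0 = \Spec R_0$ of $Y_0 = T^*\Gr(r,N)$ --- which coincides with the affinization of $Y'_0 = T^*\Gr(N-r,N)$ --- is the nilpotent orbit closure $X_0 = \overline{\mathcal O_r}$, where $\mathcal O_k = \{A \in \mathfrak{gl}_N : A^2 = 0,\ \rank A = k\}$, and that $\phi_0 : Y_0 \to X_0$ and $\phi'_0 : Y'_0 \to X_0$ are the two Springer-type crepant resolutions. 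By construction, the stratified Atiyah flop is a deformation
\[ Y \xrightarrow{\phi} X \xleftarrow{\phi'} Y' \qquad \text{over } (\Spec D, d),\quad D = \CC[t],\quad d = (t), \]
in which $\phi, \phi'$ are projective, $X = \Spec R$ is affine, the fibre over $d$ recovers $\phi_0$ and $\phi'_0$, and $X, Y, Y'$ carry compatible $\Gm$-actions for which $\phi, \phi'$ and $q : X \to \Spec D$ are equivariant and the induced action on the central fibre $X_0$ is the contracting scaling action on the cone $\overline{\mathcal O_r}$.

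The next step is to check condition (b) of Theorem \ref{main thm equiv}. For the codimension hypothesis, $\Sing(X_0) = \overline{\mathcal O_{r-1}}$ and $\dim \mathcal O_k = 2k(N-k)$, so
\[ \codim_{X_0}\Sing(X_0) = 2r(N-r) - 2(r-1)(N-r+1) = 2(N - 2r + 1) \ge 4, \]
using $2r \le N - 1$; this is precisely where that inequality on the stratified Mukai flop is used, the value $2r = N$ --- for which this codimension would drop to $2$ --- lying outside the reach of the method (cf. Section \ref{sect Eg counter}). For the group action, one checks directly that the $\Gm$-action on $X$ is good in the sense of Definition \ref{def good action}: it contracts $X$ to the unique fixed point $x$, namely the cone point of $X_0$, and $q(x) = d$. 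Hence Theorem \ref{main thm equiv} applies.

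A good and strict tilting-type equivalence for the stratified Mukai flop is, by definition, given by good tilting bundles $T_0$ on $Y_0$, $T'_0$ on $Y'_0$ with an $R_0$-algebra isomorphism $\End_{Y_0}(T_0) \simeq \End_{Y'_0}(T'_0)$ and with $T_0$, $T'_0$ agreeing on the largest common open subset $U_0$ of $X_0, Y_0, Y'_0$. Theorem \ref{main thm equiv} then yields good tilting bundles $T$ on $Y$, $T'$ on $Y'$ lifting $T_0, T'_0$ (in the sense of Definition \ref{def lift tilting-type}) together with an $R$-algebra isomorphism $\End_Y(T) \simeq \End_{Y'}(T')$, hence a good tilting-type equivalence $\D(Y) \simeq \D(Y')$ lifting the given one. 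It remains to see that this lift is strict, i.e. that $T$ and $T'$ agree on the largest common open subset $U$ of $X, Y, Y'$ --- the locus over which $Y \dashrightarrow Y'$ is an isomorphism, which is a deformation of $U_0$ over $\Spec D$. Both $T|_U$ and $T'|_U$ restrict on the central fibre to the canonically identified bundle $T_0|_{U_0} \simeq T'_0|_{U_0}$; since a tilting bundle $E$ satisfies $\Ext^1(E,E) = 0$, this restriction is rigid (here one uses that $Y_0 \setminus U_0$ has codimension $\ge 3$ when $2r \le N-1$), and a standard deformation argument --- or, alternatively, inspection of the construction in the proof of Theorem \ref{main thm equiv}, which is compatible with restriction to $U$ --- gives $T|_U \simeq T'|_U$. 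Thus the lifted equivalence is good and strict, as desired.

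The step I expect to be the genuine obstacle is not this formal chain of reductions but the concrete geometric input underlying it: producing the deformed affine base $X$ of the stratified Atiyah flop explicitly, endowing it with the right $\Gm$-action, and verifying that this action is good --- this is where the geometry particular to the stratified Atiyah flop, rather than a general deformation, is used. A secondary point demanding care is the identification of the common open subset $U$ of the total family as the flat deformation of $U_0$, which is what makes the rigidity argument for strictness run; the remaining ingredients are the codimension computation above and a direct appeal to Theorem \ref{main thm equiv}.
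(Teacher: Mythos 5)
Your overall strategy is exactly the paper's: realize the stratified Atiyah flop as a $\Gm$-equivariant deformation of the stratified Mukai flop, verify condition (b) of Theorem \ref{main thm equiv} together with the codimension bound $\codim_{X_0}\Sing(X_0) = 2(N-2r+1) \geq 3$ for $2r \leq N-1$, and apply that theorem to produce the good lift. Up to that point the proposal is correct.

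The gap is in your argument for strictness. You assert that ``$Y_0 \setminus U_0$ has codimension $\geq 3$ when $2r \leq N-1$'' and use this to make $T_0|_{U_0}$ rigid. This is false in the boundary case: the paper computes $\codim_{Y_0}\overline{\phi_0^{-1}(B(k))} \geq N - 2r + 1$ with equality at $k = r-1$, so $\codim_{Y_0}(\exc(\phi_0)) = N - 2r + 1$, which equals $2$ when $2r = N-1$. (Only the codimension of $\Sing(X_0)$ \emph{in $X_0$} is $\geq 4$; the exceptional locus in $Y_0$ is larger.) Consequently $R^1 j_* \stsh_{U_0}$ need not vanish, the spectral-sequence argument of Proposition \ref{prop 3-key1} does not give $H^1(U_0, \EEnd_{U_0}(T_0|_{U_0})) = 0$, and your rigidity/uniqueness-of-lifts argument on $U$ breaks down precisely when $2r = N-1$. (It also silently treats the base $\CC[t]$ as if it were Artinian, which requires further justification.) The fix is the one the paper uses and which you gesture at only as an ``alternative'': Theorem \ref{main thm equiv} already asserts, as part of its conclusion, that the lifted tilting bundles satisfy $\phi_* T \simeq \phi'_* T'$. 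Since $\phi$ and $\phi'$ are isomorphisms over the largest common open subset $U$, restricting this isomorphism to $U$ gives $T|_U \simeq (\phi_*T)|_U \simeq (\phi'_*T')|_U \simeq T'|_U$ directly, with no codimension hypothesis on the exceptional locus. The codimension computation for $\exc(\phi_0)$ is needed only for the supplementary claim (via Corollary \ref{cor thm form defo} and its equivariant analogue) that for $2r \leq N-2$ one may drop the goodness assumption.
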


We note that, if $2r \leq N - 2$, we can remove the assumption that the tilting-type equivalence is good.
Since we can construct a strict tilting-type equivalence for stratified Mukai flops using results of Kaledin \cite{Kal08},
we have the following corollary.
Although there are some previous works on the derived equivalence for stratified Atiyah flops (eg. \cite{Kaw05, Cau12}), the following corollary is new to the best of the author's knowledge.

\begin{cor}
If $2r \leq N -2$, then there exists a strict tilting-type equivalence for the stratified Atiyah flop on $\Gr(r,N)$.
\end{cor}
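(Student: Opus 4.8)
The plan is to combine two ingredients: the existence of a strict tilting-type equivalence for the stratified Mukai flop on $\Gr(r,N)$, and the lifting statement of Theorem \ref{Mukai Atiyah lift}. For the second ingredient one uses Corollary \ref{intro stratified} together with its addendum: when $2r \leq N-2$ the ``good'' hypothesis on the input equivalence may be dropped, so that a strict tilting-type equivalence for the stratified Mukai flop automatically lifts to a strict tilting-type equivalence for the stratified Atiyah flop. Hence the whole statement reduces to producing a strict tilting-type equivalence for the stratified Mukai flop, and then quoting Theorem \ref{Mukai Atiyah lift}.

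To produce that equivalence I would proceed as follows. Write $Y_0 = T^*\Gr(r,N)$ and $Y'_0 = T^*\Gr(N-r,N)$; these are two conical symplectic resolutions of one and the same affine variety $X_0 = \Spec R$. By Kaledin's construction of derived equivalences via quantization in positive characteristic \cite{Kal08} — reduce $X_0$ and its resolution modulo a large prime, quantize, use that the resulting canonical Azumaya algebra splits on a Frobenius neighbourhood of the exceptional fibre, and lift the splitting bundle back to characteristic zero — one obtains a tilting bundle $T_0$ on $Y_0$, and applying the same construction to the other resolution a tilting bundle $T'_0$ on $Y'_0$. The endomorphism algebra of each is identified, as an $R$-algebra, with the global sections of the lifted quantization of $X_0$, an object depending only on $X_0$ and not on the chosen resolution; this furnishes an $R$-algebra isomorphism $\End_{Y_0}(T_0) \simeq \End_{Y'_0}(T'_0)$, hence by Proposition \ref{tilting equiv} a tilting-type equivalence $\D(Y_0) \xrightarrow{\sim} \D(Y'_0)$. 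Strictness is then built in: the largest common open subset $U$ of $X_0$, $Y_0$, $Y'_0$ contains the smooth locus of $X_0$ (over which both resolutions are the identity) and has complement of codimension $\geq 3$ under the hypothesis $2r \leq N-2$; over $U$ the bundles $T_0|_U$ and $T'_0|_U$ are both the splitting bundle of the quantized structure sheaf of $U$, which is intrinsic to $U$ and blind to the resolution, so $T_0|_U \simeq T'_0|_U$.

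Applying Theorem \ref{Mukai Atiyah lift} to this strict tilting-type equivalence — with the ``good'' hypothesis removed, which is legitimate precisely because $2r \leq N-2$ — then yields a strict tilting-type equivalence for the stratified Atiyah flop on $\Gr(r,N)$, which is the assertion.

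The main obstacle is the Mukai-flop step. One must check that Kaledin's machinery outputs genuine vector bundles rather than merely perfect tilting complexes (this is fine here, as $T^*\Gr$ already carries a tilting bundle, so there is no obstruction to a bundle-valued tilting generator) and, more delicately, that the two identifications of $\End_{Y_0}(T_0)$ and $\End_{Y'_0}(T'_0)$ with the quantization algebra are compatible and restrict correctly over $U$. One could instead try to write down an explicit tilting bundle, pulled back to each cotangent bundle from a Kapranov-type exceptional collection on the corresponding Grassmannian; but then the entire difficulty migrates into verifying strictness, i.e.\ that the two explicit bundles literally coincide on $U$, and it is exactly the canonical, resolution-independent character of the quantization construction that makes strictness transparent.
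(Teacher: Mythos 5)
Your proposal follows the paper's own route: the paper likewise obtains a strict tilting-type equivalence for the stratified Mukai flop from Kaledin's quantization results (packaged as Theorem \ref{Kal equiv}, using the good $\Gm$-action on $\overline{B(r)}$ and the identity $\widehat{\phi}_*\widehat{\EE}\simeq\widehat{\phi}'_*\widehat{\EE}'$ to get strictness), and then lifts it through Theorem \ref{Mukai Atiyah lift}, where the computation $\codim_{Y_0}\exc(\phi_0)\geq 3$ exactly when $2r\leq N-2$ is what allows the ``good'' hypothesis to be dropped. The argument is correct and essentially identical to the paper's.
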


\subsection{Plan of the article}
In Section \ref{sect prelim} we give the definitions for some basic terminologies and provide their fundamental properties we use in the later sections.
In Section \ref{sect tilting-type equiv} we define (good or strict) tilting-type equivalence for schemes and investigate their properties.
In Section \ref{sect infinitesimal} we explain some infinitesimal deformation theory of crepant resolutions and prove Theorem \ref{intro thm inf}.
Section \ref{sect complete local} and \ref{sect equivariant} will be devoted to prove Theorem \ref{intro thm complete and equivariant}.
In Section \ref{sect example} we explain the geometry of stratified Mukai flops and stratified Atiyah flops, and prove Corollary \ref{intro stratified}.
In Appendix  \nolinebreak \ref{sect appendix} we discuss certain generalization of Kaledin's result on derived equivalences of symplectic resolutions.

\subsection{Notations. }
In this article, we always work over the complex number field $\mathbb{C}$. 
A \textit{scheme} always means a Noetherian $\CC$-scheme.
Moreover, we adopt the following notations.
\begin{enumerate}
\item[$\bullet$] $\Qcoh(X)$ : the category of quasi-coherent sheaves on a scheme $X$.
\item[$\bullet$] $\coh(X)$ : the category of coherent sheaves on a scheme $X$.
\item[$\bullet$] $\modu(A)$ : the category of finitely generated right modules over a ring $A$.
\item[$\bullet$] $\mathrm{D}^*(\mathcal{A})$, ($* = - ~ \text{or} ~ \mathrm{b}$) : the (bounded above or bounded) derived category of an abelian category $\mathcal{A}$.
\item[$\bullet$] $\mathrm{D}^*(X) := \mathrm{D}^*(\coh(X))$, ($* = - ~ \text{or} ~ \mathrm{b}$) : the (bounded above or bounded) derived category of $\coh(X)$.
\item[$\bullet$] $\mathrm{D}^*(A) := \mathrm{D}^*(\modu(A))$, ($* = - ~ \text{or} ~ \mathrm{b}$) : the (bounded above or bounded) derived category of $\modu(A)$.
\item[$\bullet$] $\Art$ : the category of local Artinian (commutative) $\CC$-algebras with residue field $\CC$.
\item[$\bullet$] $\Def X$ : the deformation functor (or local moduli functor) of $X$.
\item[$\bullet$] $\exc(\phi)$ : the exceptional locus of a birational morphism $\phi : Y \to X$.
\end{enumerate}
In addition, we refer to the bounded derived category $\D(X)$ of coherent sheaves on $X$ as \textit{the derived category of $X$}.

\vspace{3mm}

\noindent
\textbf{Acknowledgements.}
The author would like to express his gratitude to Professor Michel Van den Bergh for valuable comments and discussions,
and for reading the draft version of this article.
The author would also like to thank his supervisor Professor Yasunari Nagai for continuous encouragement,
and Yuki Hirano, Takeru Fukuoka and Naoki Koseki for reading the draft version of this article and giving useful comments.

This work was done during the author's stay at Hasselt University in Belgium.
The author would like to thank Hasselt University for the hospitality and excellent working condition.
This work is supported by Grant-in-Aid for JSPS Research Fellow 17J00857.

%%%%%%%%%%%%%%%%%%%%%%%%%%%%%%%%%%%%%%%%%%%%%%%%%%%%%%%%%%%%%%%%%%%%%%%
\section{Preliminaries} \label{sect prelim}

\subsection{Tilting bundles and Non-commutative crepant resolutions}

In the following, a \textit{vector bundle} on a scheme $X$ means a locally free sheaf of finite rank on $X$.

\begin{defi} \label{def tilting bundle} \rm
A vector bundle $T$ on a scheme $X$ is said to be \textit{partial tilting} if
\[ \Ext_X^p(T,T) = 0 \]
for all $p>0$.
A partial tilting bundle $T$ is called a \textit{tilting} bundle if $T$ is in addition a generator of the category $\mathrm{D}^-(\Qcoh(X))$, i.e. for $F \in \mathrm{D}^-(\Qcoh(X))$, $\RHom_X(T, F) = 0$ implies $F=0$.
We say that a tilting bundle $T$ is \textit{good} if it contains a trivial line bundle as a direct summand.
\end{defi}

For a triangulated category $\mathcal{D}$, we say that an object $E \in \mathcal{D}$ is a generator of $\mathcal{D}$ (or $E$ generates $\mathcal{D}$)
if for $F \in \mathcal{D}$, $\Hom_X(T, F[i]) = 0$ (for all $i \in \mathbb{Z}$) implies $F=0$.
We also say that $E$ is a classical generator of $\mathcal{D}$ (or $E$ classically generates $\mathcal{D}$) if the smallest thick subcategory of $\mathcal{D}$ containing $E$ is the whole category $\mathcal{D}$.
It is easy to see that a classical generator is a generator.
In some literatures (eg. \cite{Kal08, TU10}), tilting bundles are defined as a partial tilting bundle that is a generator of $\mathrm{D}^-(X) := \mathrm{D}^-(\coh(X))$.
The following lemma resolves this ambiguity of the definition of tilting bundles.

\begin{lem}
Let $X$ be a scheme that is projective over an affine scheme $S = \Spec R$, and $E$ a partial tilting bundle on $X$.
Then the following are equivalent.
\begin{enumerate}
\item[(i)] $E$ is a classical generator of the category of perfect complexes $\Perf(X)$ of $X$.
\item[(ii)] $E$ is a generator of $\mathrm{D}^-(X)$
\item[(iii)] $E$ is a generator of $\mathrm{D}^-(\Qcoh(X))$
\item[(iv)] $E$ is a generator of the unbounded derived category $\mathrm{D}(\Qcoh(X))$ of quasi-coherent sheaves.
\end{enumerate}
Here a perfect complex means a complex that is locally isomorphic to a bounded complex of vector bundles on $X$.
\end{lem}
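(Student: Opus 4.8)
The plan is to prove the equivalence (i) $\Leftrightarrow$ (iv) directly from the theory of compactly generated triangulated categories, the chain (iv) $\Rightarrow$ (iii) $\Rightarrow$ (ii) by formal nonsense, and then to close the loop with (ii) $\Rightarrow$ (iv); this last implication is the only one that requires real work, and it is there that the partial tilting hypothesis and the projectivity of $X$ over the affine base are used.

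For the formal part, I would first recall two standard inputs. Since $X$ is a Noetherian separated scheme, the category $\mathrm{D}(\Qcoh(X))$ is compactly generated and its full subcategory of compact objects is precisely $\Perf(X)$; this is a theorem of Bondal--Van den Bergh (building on Neeman and Thomason--Trobaugh). Combined with Neeman's criterion for a set of compact objects to generate a compactly generated category, this gives: for any set $\mathcal{S}$ of perfect complexes on $X$, the statements ``$\mathcal{S}$ classically generates $\Perf(X)$'', ``the smallest localising subcategory of $\mathrm{D}(\Qcoh(X))$ containing $\mathcal{S}$ is the whole category'', and ``$\mathcal{S}^{\perp}=0$ in $\mathrm{D}(\Qcoh(X))$'' are all equivalent. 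Applying this to $\mathcal{S}=\{E\}$ — legitimate since a vector bundle is a perfect complex, hence a compact object — yields (i) $\Leftrightarrow$ (iv). The implication (iv) $\Rightarrow$ (iii) is immediate because $\mathrm{D}^{-}(\Qcoh(X))$ is a full triangulated subcategory of $\mathrm{D}(\Qcoh(X))$, and (iii) $\Rightarrow$ (ii) holds because the natural functor $\mathrm{D}^{-}(\coh(X))\to\mathrm{D}^{-}(\Qcoh(X))$ is fully faithful (with essential image the complexes having coherent cohomology), so a generator of the target restricts to a generator of the source.

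For (ii) $\Rightarrow$ (iv), set $A=\End_{X}(E)$ and consider the adjoint pair $\Phi:=(-)\otimes^{\mathbb{L}}_{A}E\colon \mathrm{D}(\mathrm{Mod}\,A)\to\mathrm{D}(\Qcoh(X))$ and its right adjoint $\Psi:=\RHom_{X}(E,-)$. Since $E$ is compact, $\Psi$ commutes with arbitrary coproducts; since $E$ is partial tilting, $\Psi\Phi(A)=\RHom_{X}(E,E)=A$ with the unit an isomorphism on $A$, hence the unit $\id\to\Psi\Phi$ is an isomorphism on all of $\mathrm{D}(\mathrm{Mod}\,A)$ (the objects on which it is an isomorphism form a localising subcategory containing the compact generator $A$). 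Therefore $\Phi$ is fully faithful, its essential image is the localising subcategory $\mathrm{Loc}(E)$ generated by $E$, and the inclusion $\mathrm{Loc}(E)\hookrightarrow\mathrm{D}(\Qcoh(X))$ admits a right adjoint whose counit sits in a functorial triangle $\Phi\Psi(F)\to F\to K_{F}$ with $K_{F}\in\mathrm{Loc}(E)^{\perp}=E^{\perp}$. I would then verify the necessary finiteness: because $X$ is projective over the Noetherian affine $\Spec R$, the functor $\RGamma(X,-)$ has finite cohomological dimension and preserves bounded-above coherent complexes, so $\Psi$ sends $\mathrm{D}^{-}(\coh(X))$ into $\mathrm{D}^{-}(\modu A)$ (the cohomology is finitely generated over $R$, hence over $A$); conversely, tensoring a bounded-above complex of finitely generated projective $A$-modules with $E$ gives a bounded-above complex of vector bundles, so $\Phi$ sends $\mathrm{D}^{-}(\modu A)$ into $\mathrm{D}^{-}(\coh(X))$. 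Hence for $F\in\mathrm{D}^{-}(\coh(X))$ we get $\Phi\Psi(F)\in\mathrm{D}^{-}(\coh(X))$, so $K_{F}\in\mathrm{D}^{-}(\coh(X))\cap E^{\perp}$, which vanishes by (ii); thus $F\simeq\Phi\Psi(F)\in\mathrm{Loc}(E)$. This proves $\mathrm{D}^{-}(\coh(X))\subseteq\mathrm{Loc}(E)$, in particular $\Perf(X)\subseteq\mathrm{Loc}(E)$; since $\Perf(X)$ is a set of compact generators of $\mathrm{D}(\Qcoh(X))$, we conclude $\mathrm{Loc}(E)=\mathrm{D}(\Qcoh(X))$, i.e. $E^{\perp}=0$, which is (iv).

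The only genuinely delicate point is this last implication: one must check that $\Phi$ and $\Psi$ really do restrict to the bounded-above coherent categories, so that the term $K_{F}$ in the Bousfield triangle remains coherent and bounded above — this is exactly where the projectivity of $X$ over $\Spec R$ enters, through coherence of higher direct images, finite cohomological dimension of $\RGamma(X,-)$, and the fact that $A$, being module-finite over the Noetherian ring $R$, is Noetherian — and one must be careful that the three notions of generation are interrelated precisely as Neeman's theorem asserts. Everything else is a formality once the Bondal--Van den Bergh compact-generation result is available.
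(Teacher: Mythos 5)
Your proof is correct, and for the one nontrivial implication it takes a genuinely different route from the paper. Both arguments use Bondal--Van den Bergh to identify $\Perf(X)$ with the compact objects of $\mathrm{D}(\Qcoh(X))$ and Neeman's theorem to obtain (i) $\Leftrightarrow$ (iv), and both get (iv) $\Rightarrow$ (iii) $\Rightarrow$ (ii) formally from the inclusions of full subcategories; the difference is in how the cycle is closed. The paper proves (ii) $\Rightarrow$ (i): it invokes the Toda--Uehara tilting equivalence $\Psi : \D(X) \to \D(\Lambda)$ with $\Lambda = \End_X(E)$ (Proposition \ref{tilting equiv}, whose hypothesis is precisely condition (ii)), restricts it to an equivalence $\Perf(X) \simeq \mathrm{K}^{\mathrm{b}}(\mathrm{proj}(\Lambda))$ using Orlov's intrinsic characterization of both sides as the homologically finite objects, and concludes because $\Lambda$ classically generates $\mathrm{K}^{\mathrm{b}}(\mathrm{proj}(\Lambda))$. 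You instead prove (ii) $\Rightarrow$ (iv) directly by a Bousfield-localization argument in $\mathrm{D}(\Qcoh(X))$: full faithfulness of $- \otimes^{\mathbb{L}}_{A} E$ onto $\mathrm{Loc}(E)$ (using partial tilting to see that the unit is an isomorphism on the compact generator $A$), the triangle $\Phi\Psi(F) \to F \to K_F$ with $K_F \in E^{\perp}$, and the coherence and boundedness bookkeeping that places $K_F$ in $\mathrm{D}^-(\coh(X))$ so that (ii) forces $K_F = 0$. The paper's route is shorter because it reuses a proposition stated immediately afterwards; yours is more self-contained, essentially re-proving the relevant part of Toda--Uehara in the unbounded setting, and it has the merit of making explicit exactly where properness over $\Spec R$, coherence of higher direct images, finite cohomological dimension of $\RGamma(X,-)$, and module-finiteness of $A$ over the Noetherian ring $R$ enter. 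Your finiteness checks are the right ones and they do go through.
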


\begin{proof}
Since $\mathrm{D}^-(X) \subset \mathrm{D}^-(\Qcoh(X)) \subset \mathrm{D}(\Qcoh(X))$,
we only have to prove that (ii) implies (i) and that (i) implies (iv).

According to Theorem 3.1.1 and Theorem 2.1.2 of \cite{BVdB03} (and also by Theorem 3.1.3 of loc. cit.),
$E$ is a classical generator of $\Perf(X)$ if and only if $E$ is a generator of $\mathrm{D}(\Qcoh(X))$.

Let us assume that $E$ is a generator of $\mathrm{D}^-(X)$ and put $\Lambda := \End_X(E)$. 
Then the following Proposition \ref{tilting equiv} implies that
there is an equivalence of categories
\[ \Psi : \D(X) \to \D(\Lambda) \]
such that $\Psi(E) = \Lambda$ (In \cite{TU10}, Toda and Uehara proved this equivalence under the assumption that the partial tilting bundle $E$
is a generator of $\mathrm{D}^-(X)$).
Let $\mathrm{K}^{\mathrm{b}}(\mathrm{proj}(\Lambda))$ be a full subcategory of $\D(\Lambda)$ consisting of complexes that are quasi-isomorphic to
bounded complexes of projective modules.
Since complexes in $\Perf(X)$ or $\mathrm{K}^{\mathrm{b}}(\mathrm{proj}(\Lambda))$ are characterized as homologically finite objects (see \cite[Proposition 1.11]{Orl06}),
the equivalence above restricts to an equivalence $\Perf(X) \simeq \mathrm{K}^{\mathrm{b}}(\mathrm{proj}(\Lambda))$.
Since the category $\mathrm{K}^{\mathrm{b}}(\mathrm{proj}(\Lambda))$ is classically generated by $\Lambda$,
the smallest thick subcategory containing $E$ should be $\Perf(X)$.
\end{proof}

We adopt the definition for tilting bundles above since in some parts of discussions in this article we need to deal with complexes of quasi-coherent sheaves.

If we find a tilting bundle on a projective scheme over an affine variety, we have an equivalence between the derived category of the scheme
and the derived category of a non-commutative ring.

\begin{prop}[{\cite[Lemma 3.3]{TU10}}] \label{tilting equiv}
Let $Y$ be a scheme that is projective over an affine scheme $X = \Spec R$.
Assume that there is a tilting bundle $T$ on $Y$.
Then, the functor
\[ \Psi := \RHom(T,-) : \mathrm{D}^-(Y) \to \mathrm{D}^-(\End_Y(T)) \]
gives an equivalence of triangulated categories.
Furthermore, this equivalence restricts an equivalence between $\D(Y)$ and $\D(\End_Y(T))$.
\end{prop}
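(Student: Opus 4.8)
The statement to prove is Proposition \ref{tilting equiv}: for $Y$ projective over $X = \Spec R$ with a tilting bundle $T$, the functor $\Psi = \RHom_Y(T, -)$ gives an equivalence $\mathrm{D}^-(Y) \xrightarrow{\sim} \mathrm{D}^-(\Lambda)$ where $\Lambda = \End_Y(T)$, which then restricts to an equivalence on bounded derived categories. Since this is attributed to \cite[Lemma 3.3]{TU10}, the natural plan is to follow the standard tilting-equivalence machinery adapted to this geometric setting.

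\medskip

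\emph{Step 1: Set up the adjoint pair.} First I would observe that $T$ is a finite rank vector bundle, so $\Psi = \RHom_Y(T,-)$ takes values in $\mathrm{D}^-$ of $\Lambda$-modules (the cohomology sheaves $\mathcal{E}xt^i_Y(T, F)$ carry a right $\Lambda$-action and are coherent, and are bounded above since $T$ is perfect). Its left adjoint is $\Psi^{-1} := (-) \otimes^{\mathbf{L}}_\Lambda T : \mathrm{D}^-(\Lambda) \to \mathrm{D}^-(Y)$. One checks $\Psi(T) = \RHom_Y(T,T)$; since $T$ is partial tilting, $\Ext^p_Y(T,T) = 0$ for $p > 0$, so this is just $\Lambda$ concentrated in degree $0$, and the adjunction unit $\Lambda \to \Psi\Psi^{-1}(\Lambda)$ is an isomorphism.

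\medskip

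\emph{Step 2: Show the unit and counit are isomorphisms.} The counit $\Psi^{-1}\Psi(F) \to F$ is an isomorphism for $F = T$ by Step 1, hence for every object in the thick (in fact localizing, once one passes to $\Qcoh$) subcategory generated by $T$; since $T$ is a generator of $\mathrm{D}^-(\Qcoh(Y))$ — and here I would invoke the preceding Lemma to know that classical generation of $\Perf(Y)$ plus the generator hypotheses all coincide — the counit is an isomorphism on all of $\mathrm{D}^-(Y)$ (a standard dévissage: the full subcategory of $F$ on which the counit is an isomorphism is triangulated, closed under the relevant coproducts, and contains a generator). Dually, the unit $M \to \Psi\Psi^{-1}(M)$ is an isomorphism for $M = \Lambda$ by Step 1, hence for all of $\mathrm{D}^-(\Lambda)$ since $\Lambda$ generates $\mathrm{D}^-(\modu \Lambda)$. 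Therefore $\Psi$ is an equivalence $\mathrm{D}^-(Y) \xrightarrow{\sim} \mathrm{D}^-(\Lambda)$.

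\medskip

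\emph{Step 3: Restrict to bounded categories.} Finally, I would note that an object $F \in \mathrm{D}^-(Y)$ lies in $\D(Y)$ iff it is \emph{homologically finite} in the sense of \cite[Prop.~1.11]{Orl06}, i.e. $\RHom_Y(F, G)$ and $\RHom_Y(G, F)$ are bounded for all $G$; this is a purely categorical condition preserved by any equivalence, and on the algebra side it cuts out $\D(\Lambda)$ (using that $R$ is Noetherian and $\Lambda$ is a finite $R$-algebra, so that $\Lambda$-modules have the expected finiteness). Hence $\Psi$ restricts to $\D(Y) \xrightarrow{\sim} \D(\Lambda)$.

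\medskip

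The main obstacle is Step 2: verifying that the counit is an isomorphism on \emph{all} of $\mathrm{D}^-(Y)$ rather than just on perfect complexes. The generator hypothesis is about $\mathrm{D}^-(\Qcoh(Y))$ or the unbounded $\mathrm{D}(\Qcoh(Y))$, so one genuinely has to leave the world of coherent/bounded-above-coherent complexes, run the dévissage at the level of quasi-coherent sheaves (where arbitrary coproducts and Brown representability are available), and only at the end descend back to $\mathrm{D}^-(Y)$ using that $Y$ is projective over a Noetherian affine scheme so that $\mathrm{D}^-(Y) \hookrightarrow \mathrm{D}^-(\Qcoh(Y))$ is fully faithful with the expected image. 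This is exactly the subtlety the authors flag in the surrounding discussion, and it is why the careful statement of the preceding Lemma — equating the various notions of ``generator'' — is needed as input.
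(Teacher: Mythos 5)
The paper does not prove this proposition at all: it is quoted verbatim from \cite[Lemma 3.3]{TU10}, so the only fair comparison is with the standard tilting argument (which is what Toda--Uehara's proof is). Your Steps 1 and 2 reproduce that argument correctly in outline: identify the adjoint pair $(-)\otimes^{\mathbf{L}}_{\Lambda}T \dashv \RHom_Y(T,-)$, check unit and counit on the generators $\Lambda$ and $T$, and propagate. (The cleanest way to finish Step 2 is to first establish the unit on all of $\mathrm{D}^-(\Lambda)$ via projective resolutions, then observe that $\Psi$ applied to the counit is split by the unit, so the cone of the counit is killed by $\RHom_Y(T,-)$ and hence vanishes by the generator hypothesis; this avoids having to know that the localizing subcategory generated by $T$ is everything, although that route also works via compactness of $T$.)

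Step 3, however, contains a genuine error. Orlov's Proposition 1.11 characterizes the \emph{perfect} complexes inside $\D(Y)$ as the homologically finite objects; it does not characterize $\D(Y)$ inside $\mathrm{D}^-(Y)$. The two only agree when $Y$ is smooth, and the proposition is stated (and used in this paper) for schemes that are not smooth --- notably for deformations over local Artinian rings, which are non-reduced, so that $\Perf(Y)\subsetneq\D(Y)$. Moreover the criterion as you phrase it (``$\RHom_Y(F,G)$ and $\RHom_Y(G,F)$ bounded for all $G$'') fails for every $F$ if $G$ ranges over $\mathrm{D}^-(Y)$, and cuts out only $\Perf(Y)$ if $G$ ranges over $\D(Y)$. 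The correct argument for the restriction is different and one-sided: the inclusion $\Psi(\D(Y))\subset\D(\Lambda)$ holds because $T$ is a vector bundle and $Y\to\Spec R$ is projective, so $\RHom_Y(T,F)=\RGamma(Y,T^{\vee}\otimes F)$ has cohomology in a range of width $\dim Y$ for bounded $F$; for the converse one detects boundedness below of $F\in\mathrm{D}^-(Y)$ by the uniform vanishing of $\Hom_Y(\stsh_Y(-n),F[i])$ for $i\ll 0$ and all $n\gg 0$ (Serre vanishing makes the lowest nonzero cohomology sheaf of $F$ visible), transports this across the equivalence using that the objects $\Psi(\stsh_Y(-n))$ have uniformly bounded amplitude, and uses the elementary vanishing $\Hom_{\Lambda}(G,M[i])=0$ for $i\ll 0$ when $M$ is bounded. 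Note that Orlov's result \emph{is} the right tool one paragraph earlier in the paper, where the unnamed lemma matches $\Perf(Y)$ with $\mathrm{K}^{\mathrm{b}}(\mathrm{proj}(\Lambda))$; you have transplanted it to a place where it does not apply.
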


Next we recall some basic properties of tilting bundles.
The following lemma is well-known (for example, see \cite[Lemma 3.1]{H17a}).

\begin{lem}
Let $X = \Spec R$ be a normal Gorenstein affine variety and $\phi : Y \to X$ be a crepant resolution.
Let $F$ be a coherent sheaf on $Y$ such that
\[ H^i(Y, F) = 0 = \Ext_Y^i(F, \stsh_Y) \]
for all $i > 0$.
Then the $R$-module $\phi_*F$ is maximal Cohen-Macaulay. 
\end{lem}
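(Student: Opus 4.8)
The plan is to prove that $\phi_*F$ is a maximal Cohen-Macaulay $R$-module by combining the vanishing hypotheses on $F$ with Grothendieck duality and the crepancy of $\phi$. Since $X = \Spec R$ is Gorenstein and normal, $R$ is Cohen-Macaulay, so I must show $\depth_{R_{\mathfrak p}} (\phi_*F)_{\mathfrak p} = \dim R_{\mathfrak p}$ for every prime $\mathfrak p$, equivalently that $\phi_*F$ has no ``extra'' local cohomology. The standard route is via Serre duality / local duality: one shows that $\Ext_R^i(\phi_*F, \omega_R) = 0$ for all $i > 0$, where $\omega_R \simeq R$ is the dualizing module; this vanishing is equivalent to $\phi_*F$ being maximal Cohen-Macaulay over the Gorenstein ring $R$.

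\emph{First I would} use the hypothesis $H^i(Y,F) = 0$ for $i>0$ together with the fact that $\phi$ is projective (indeed proper birational) to conclude that $R\phi_* F = \phi_* F$, i.e. $F$ has no higher direct images: this needs $R^i\phi_* F = 0$ for $i>0$, which follows because $X$ is affine so $H^i(Y,F) = H^i(X, R\phi_*F) = \bigoplus_j H^{i-j}(X, R^j\phi_* F)$ degenerates and the $R^j\phi_*F$ are coherent sheaves on an affine scheme. \emph{Next} I would apply Grothendieck--Serre duality for the morphism $\phi$: since $\phi$ is crepant, $\phi^!\stsh_X \simeq \stsh_Y$ (this is precisely the crepancy condition, $\omega_Y \simeq \phi^*\omega_X$, combined with $\phi$ having relative dimension zero in the derived sense). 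Duality then gives
\[ \RHom_R(R\phi_* F, \omega_R) \simeq R\phi_*\, \RHom_Y(F, \phi^!\omega_X) \simeq R\phi_*\, \RHom_Y(F, \stsh_Y), \]
using $\omega_X \simeq R$. The hypothesis $\Ext_Y^i(F,\stsh_Y) = 0$ for $i>0$ says the right-hand side, before applying $R\phi_*$, is just the sheaf $\lhom_Y(F,\stsh_Y)$ placed in degree zero; and then another application of the ``no higher direct images on an affine base'' argument (this time one should check $H^i(Y, \lhom_Y(F,\stsh_Y)) = \Ext^i_Y(F,\stsh_Y) = 0$ for $i > 0$, which is exactly the second hypothesis) shows $R\phi_* \lhom_Y(F,\stsh_Y)$ is concentrated in degree zero. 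Combining, $\RHom_R(\phi_*F, R)$ is concentrated in degree zero, i.e. $\Ext_R^i(\phi_*F, R) = 0$ for all $i>0$.

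\emph{Finally}, over the Gorenstein ring $R$, vanishing of $\Ext_R^{>0}(M, R)$ for a finitely generated module $M$ is equivalent to $M$ being maximal Cohen-Macaulay (this is a standard consequence of local duality: $\Ext_R^i(M,R)_{\mathfrak p} = \Ext_{R_{\mathfrak p}}^i(M_{\mathfrak p}, R_{\mathfrak p})$, and over a local Gorenstein ring $R_{\mathfrak p}$ the top nonvanishing $\Ext$ against the free module detects the depth defect). Hence $\phi_*F$ is maximal Cohen-Macaulay.

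\textbf{Main obstacle.} The delicate point is the careful bookkeeping in the duality step: one must be sure that crepancy gives $\phi^! \stsh_X \simeq \stsh_Y$ as an isomorphism in the derived category (not merely $\omega_Y \simeq \phi^*\omega_X$ on the smooth locus), and that the two hypotheses $H^{>0}(Y,F) = 0$ and $\Ext_Y^{>0}(F,\stsh_Y) = 0$ are exactly what is needed to collapse the two spectral sequences (for $R\phi_*F$ and for $R\phi_*\RHom_Y(F,\stsh_Y)$) to degree zero. Everything else is formal once these collapses are in hand. As the excerpt notes, this is well-known (cf. \cite[Lemma 3.1]{H17a}), so I would keep the argument brief and cite the duality formalism rather than reprove it.
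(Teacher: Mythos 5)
Your argument is correct and is essentially the standard proof of this lemma: the paper itself gives no proof, only the citation to \cite[Lemma 3.1]{H17a}, and the proof there runs exactly along your lines (collapse $R\phi_*F$ to $\phi_*F$ using $H^{>0}(Y,F)=0$ over the affine base, apply Grothendieck duality with $\phi^!\stsh_X\simeq\stsh_Y$ from crepancy to get $\Ext^{>0}_R(\phi_*F,R)=0$ from $\Ext^{>0}_Y(F,\stsh_Y)=0$, and invoke the Ext-vanishing criterion for maximal Cohen--Macaulayness over a Gorenstein ring). The only cosmetic caveat is that Gorenstein does not force $\omega_R\simeq R$ globally, but since $\omega_R$ is invertible this does not affect any of the vanishing statements, so your bookkeeping goes through.
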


The following is a direct corollary of the lemma above.

\begin{cor} \label{cor CMness}
Let $X = \Spec R$ be a normal Gorenstein affine variety and $\phi : Y \to X$ be a crepant resolution.
Assume that $Y$ admits a tilting bundle $T$.
Then $\End_Y(T)$ is maximal Cohen-Macaulay as an $R$-module.

Moreover, if $T$ is a good tilting bundle, the $R$-module $\phi_*T$ is maximal Cohen-Macaulay.
\end{cor}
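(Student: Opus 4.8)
The plan is to deduce Corollary~\ref{cor CMness} from the preceding lemma by checking that the relevant sheaf satisfies the stated vanishing hypotheses. First I would treat the claim about $\End_Y(T)$. Since $T$ is tilting, it is in particular partial tilting, so $\Ext_Y^i(T,T) = 0$ for all $i > 0$; because $\phi$ is affine (being a projective birational morphism onto an affine scheme) and $T$ is locally free, we have $\End_Y(T) \simeq \phi_*\EEnd_Y(T) = \phi_*(T^\vee \otimes T)$, and moreover $H^i(Y, T^\vee \otimes T) = \Ext_Y^i(T,T) = 0$ for $i > 0$. So to apply the lemma with $F = T^\vee \otimes T = \EEnd_Y(T)$, the only remaining point is the vanishing $\Ext_Y^i(\EEnd_Y(T), \stsh_Y) = 0$ for $i > 0$. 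Since $\EEnd_Y(T)$ is locally free, $\Ext_Y^i(\EEnd_Y(T), \stsh_Y) \simeq H^i(Y, \EEnd_Y(T)^\vee) \simeq H^i(Y, \EEnd_Y(T))$ (using that $\EEnd_Y(T)^\vee \simeq \EEnd_Y(T)$ as the dual of an endomorphism bundle), which again equals $\Ext_Y^i(T,T) = 0$ for $i > 0$. Hence the lemma applies and $\End_Y(T) \simeq \phi_*\EEnd_Y(T)$ is maximal Cohen-Macaulay over $R$.

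Next I would handle the statement about $\phi_*T$ when $T$ is good. The idea is the same: apply the lemma to $F = T$. The hypothesis $H^i(Y, T) = 0$ for $i > 0$ follows because $\stsh_Y$ is a direct summand of $T$ (goodness), hence $T^\vee$ is a direct summand of $T^\vee \otimes T = \EEnd_Y(T)$, so $H^i(Y, T^\vee)$ is a direct summand of $H^i(Y, \EEnd_Y(T)) = 0$ for $i > 0$; dualizing the argument, or rather noting that $T$ is a direct summand of $T \otimes T^\vee$ as well, gives $H^i(Y, T) = 0$ for $i > 0$. Similarly $\Ext_Y^i(T, \stsh_Y) \simeq H^i(Y, T^\vee)$ is a direct summand of $H^i(Y, \EEnd_Y(T)) = 0$ for $i > 0$. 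So both hypotheses of the lemma hold for $F = T$, and we conclude that $\phi_*T$ is a maximal Cohen-Macaulay $R$-module.

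The main obstacle, such as it is, is purely bookkeeping: making sure the summand-chasing that produces the vanishing of $H^i(Y,T)$ and $\Ext_Y^i(T,\stsh_Y)$ from the partial-tilting and goodness conditions is airtight, and in particular being careful that one really has $\stsh_Y$ (and not some twist) as a summand so that $T^\vee$ and $T$ both appear inside $\EEnd_Y(T)$. There is no genuine difficulty beyond this; the whole corollary is a formal consequence of the lemma once the cohomology vanishing is reorganized in terms of $\Ext$-groups of $T$ with itself and with $\stsh_Y$.
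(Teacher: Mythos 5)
Your proof is correct and is exactly what the paper intends: the corollary is stated as a direct consequence of the preceding lemma, applied to $F=\EEnd_Y(T)=T^{\vee}\otimes T$ for the first claim and to $F=T$ for the second, where goodness makes both $T$ and $T^{\vee}$ direct summands of $T^{\vee}\otimes T$ and hence yields the required vanishing of $H^i(Y,T)$ and $\Ext^i_Y(T,\stsh_Y)$. The only (harmless) slip is the parenthetical claim that $\phi$ is an affine morphism --- it is not in general for a nontrivial resolution, but you never need it, since $\End_Y(T)=H^0(Y,T^{\vee}\otimes T)=H^0(X,\phi_*(T^{\vee}\otimes T))$ holds simply by the definition of the pushforward and the affineness of $X$.
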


\begin{lem} \label{lem wemyss}
Let $X = \Spec R$ be a normal Gorenstein affine variety and $\phi : Y \to X$ be a crepant resolution.
Assume that $Y$ admits a tilting bundle $T$.
Then there is an algebra isomorphism $\End_Y(T) \simeq \End_X(\phi_*T)$.
\end{lem}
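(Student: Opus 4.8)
The plan is to realize $\End_Y(T)$ directly as global sections of the sheaf $\EEnd_Y(T) = \lhom_Y(T,T)$ and to compare this with $\End_X(\phi_*T)$ via the pushforward. Since $Y \to X = \Spec R$ is a crepant resolution, $Y$ is in particular projective over the affine $X$, so $\End_Y(T) = H^0(Y, \EEnd_Y(T)) = \Gamma(X, \phi_*\EEnd_Y(T))$ as $R$-modules and in fact as $R$-algebras. On the other side, $\End_X(\phi_*T) = \Hom_R(\phi_*T, \phi_*T)$. There is an obvious algebra homomorphism
\[ \alpha : \End_Y(T) \to \End_X(\phi_*T), \qquad f \mapsto \phi_*f, \]
given by applying $\phi_*$ to an endomorphism of $T$; the content of the lemma is that $\alpha$ is an isomorphism.

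First I would check injectivity of $\alpha$. If $\phi_*f = 0$ for $f \in \Hom_Y(T,T)$, then since $\phi$ is birational and $T$ is a vector bundle (hence torsion-free), $f$ vanishes on the dense open locus where $\phi$ is an isomorphism, so $f = 0$ by torsion-freeness of $\EEnd_Y(T)$. For surjectivity, the key point is a Hartogs-type extension argument: by Corollary \ref{cor CMness} (applied to the crepant resolution, using that $\phi_*\EEnd_Y(T) = \phi_* \lhom_Y(T,T)$ is the pushforward of the reflexive sheaf $\EEnd_Y(T)$), the $R$-module $\End_Y(T) \simeq \phi_*\EEnd_Y(T)$ is maximal Cohen--Macaulay, and in particular reflexive (satisfies Serre's $(S_2)$). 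Likewise $\Hom_R(\phi_*T,\phi_*T)$ is reflexive as the dual of a finitely generated module over the normal ring $R$. Both modules agree after restricting to the smooth locus $U \subseteq X$, where $\phi$ is an isomorphism and $\phi_*T|_U$ is locally free, so $\End_Y(T)|_U \xrightarrow{\sim} \End_X(\phi_*T)|_U$. Since $R$ is normal and $\phi$ is an isomorphism in codimension one, $\codim_X(X \setminus U) \geq 2$, so any map between reflexive $R$-modules that is an isomorphism on $U$ is an isomorphism; equivalently $\End_X(\phi_*T) = \Gamma(U, \End_X(\phi_*T)|_U) = \Gamma(U, \End_Y(T)|_U) = \End_Y(T)$, using the $(S_2)$ property to extend sections across the codimension $\geq 2$ locus.

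I expect the main obstacle to be bookkeeping the reflexivity/$(S_2)$ claims carefully: one must make sure that \emph{both} $\End_Y(T)$ and $\End_X(\phi_*T)$ genuinely have depth $\geq 2$ along the exceptional data so that the restriction-to-$U$ argument closes, and that the comparison map $\alpha$ is the \emph{same} map on $U$ under all the identifications. The Cohen--Macaulayness of $\End_Y(T)$ comes from Corollary \ref{cor CMness}; the $(S_2)$-ness of $\Hom_R(\phi_*T,\phi_*T)$ is automatic since it is a dual module over a normal domain. Once these are in place, the isomorphism is forced. Everything else is routine.
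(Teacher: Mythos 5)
Your overall strategy --- identify both algebras with sections over the smooth locus $U = X_{\mathrm{sm}}$ and extend across the codimension $\geq 2$ complement --- is exactly the paper's, and the injectivity argument together with the use of Corollary \ref{cor CMness} for $\End_Y(T)$ is fine. The one genuinely problematic step is the assertion that ``$\Hom_R(\phi_*T,\phi_*T)$ is reflexive as the dual of a finitely generated module over the normal ring $R$.'' First, $\End_R(M)$ is not a dual module in this generality: the identification $\Hom_R(M,M)\simeq(M\otimes_R M^{\vee})^{\vee}$ requires $M$ to be reflexive, and here $M=\phi_*T$ need not be reflexive --- the paper's own remark following Lemma \ref{lem tilting crep canoni} exhibits a tilting bundle $T'=\stsh_Y(1)\oplus\stsh_Y(2)$ on the minimal resolution of the $A_1$ surface singularity with $\phi_*T'$ non-reflexive (reflexivity of $\phi_*T$ is only guaranteed when $T$ is good or $\phi$ is small, cf.\ Lemma \ref{from tilting to NCCR}). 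Second, the general principle is false: for torsion-free non-reflexive $M$ the endomorphism ring can fail to be reflexive, e.g.\ over $R=\CC[[x,y]]$ the module $\End_R(R\oplus\mathfrak{m})$ contains $\Hom_R(R,\mathfrak{m})\simeq\mathfrak{m}$ as a direct summand. So you may not assume $\End_X(\phi_*T)$ satisfies $(S_2)$ a priori; that it does is a \emph{consequence} of the lemma, and invoking it here is circular.

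Fortunately the proof closes without that claim, and this is precisely the paper's route: all you need from $\End_R(\phi_*T)$ is that it is \emph{torsion-free}, which is immediate since $\phi_*T$ is torsion-free. Given $g\in\End_R(\phi_*T)$, restrict it to $U$, where it becomes a section of $\EEnd_Y(T)$ over $\phi^{-1}(U)$; by the $(S_2)$ property of $\End_Y(T)=\Gamma(X,\phi_*\EEnd_Y(T))$ (Corollary \ref{cor CMness}) this section extends to a global $f\in\End_Y(T)$, and then $\alpha(f)=g$ because the two agree on $U$ and $\End_R(\phi_*T)$ is torsion-free. Equivalently, $\alpha$ is a split injection whose cokernel is (isomorphic to) a submodule of the torsion-free module $\End_R(\phi_*T)$ supported on $\Sing(X)$, hence zero. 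With this one repair --- replacing the unjustified reflexivity of $\End_X(\phi_*T)$ by its torsion-freeness --- your argument is correct and coincides with the paper's.
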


\begin{proof}
Put $M := \phi_* T$ and let $U$ be the smooth locus of $X$.
Since $\End_Y(T)$ and $\End_R(M)$ are isomorphic on $U$ and $\End_Y(T)$ is reflexive $R$-module by \ref{cor CMness},
we have that the $R$-module $\End_R(M)$ contains $\End_Y(T)$ as a direct summand, and that $\End_R(M)/\End_Y(T)$ is a
submodule of $\End_R(M)$ whose support is contained in $\Sing(X)$.

On the other hand, since $M$ is torsion free, its endomorphism ring $\End_R(M)$ is also torsion free as an $R$-module.
Thus the module $\End_R(M)/\End_Y(T)$ should be zero and hence we have
\[ \End_Y(T) \simeq \End_R(M) \]
as desired.
\end{proof}

The theory of tilting bundles has a strong relationship with the theory of non-commutative crepant resolutions, which is first introduced by Van den Bergh
\cite{VdB04b}.

\begin{defi} \rm
Let $R$ be a normal Gorenstein (commutative) algebra and $M$ be a reflexive $R$-module.
We say that the endomorphism ring $\Lambda := \End_R(M)$ is a \textit{non-commutative crepant resolution} (= NCCR) of R
(or $M$ gives an NCCR of $R$) if $\Lambda$ is maximal Cohen-Macaulay as an $R$-module and the global dimension of $\Lambda$ is finite.
\end{defi}

The relation between tilting bundles and NCCRs is given as follows.

\begin{lem} \label{from tilting to NCCR}
Let $X = \Spec R$ be a normal Gorenstein affine scheme and $\phi : Y \to X$ be a crepant resolution.
If $T$ is a good tilting bundle on $Y$, then the module $M := \phi_*T$ gives an NCCR $\End_Y(T)$ of $R$.

If the resolution $\phi$ is small (i.e. the exceptional locus $\exc(\phi)$ does not contain a divisor), then the assumption that $T$ is good is not required.
\end{lem}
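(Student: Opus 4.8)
The plan is to reduce the statement to a combination of Corollary \ref{cor CMness}, Lemma \ref{lem wemyss}, and a finiteness-of-global-dimension argument. First I would recall what must be checked: writing $\Lambda := \End_Y(T)$ and $M := \phi_*T$, we must show (1) $\Lambda \simeq \End_R(M)$ as $R$-algebras, (2) $\Lambda$ is maximal Cohen--Macaulay as an $R$-module, and (3) $\gldim \Lambda < \infty$; together with the fact that $M$ is reflexive (which holds because $M = \phi_*T$ is the pushforward of a vector bundle along a birational proper map with normal target, hence reflexive) this is exactly the assertion that $M$ gives an NCCR of $R$. Item (1) is Lemma \ref{lem wemyss}, and item (2) is the first part of Corollary \ref{cor CMness}; note that for these two we do not yet need goodness of $T$.

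For item (3), the point is that $\Psi = \RHom_Y(T,-) : \D(Y) \xrightarrow{\sim} \D(\Lambda)$ of Proposition \ref{tilting equiv} sends $T$ to the projective module $\Lambda$, and since $Y$ is smooth (being a resolution) every object of $\D(Y)$ has finite projective dimension, so $\D(Y) = \Perf(Y) = \mathrm{K}^{\mathrm{b}}(\mathrm{proj}(\Lambda))$, which forces $\gldim \Lambda < \infty$: indeed any finitely generated $\Lambda$-module, viewed in $\D(\Lambda) = \Perf(Y)$, is a perfect complex, hence has a finite projective resolution, and the projective dimensions are bounded uniformly because $Y$ has finite Krull dimension. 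This step uses smoothness of $Y$ but still not goodness of $T$; so in fact the only place goodness enters is in guaranteeing that $M = \phi_*T$ is Cohen--Macaulay, which is needed to identify $\End_R(M)$ as the endomorphism ring of an MCM module (the second part of Corollary \ref{cor CMness}).

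This is where the last sentence of the lemma comes in: if $\phi$ is small, I would argue that $M = \phi_*T$ is already MCM without assuming $T$ good. The reason is that $M$ is reflexive, hence $S_2$, and is Cohen--Macaulay away from $\exc(\phi)$ (where it is locally free) — more precisely $M$ is CM in codimension $\dim \exc(\phi)$. Since $\phi$ is small, $\codim_X \phi(\exc(\phi)) \geq 2$; combined with the normality and Gorenstein (hence CM) property of $R$ and a depth/local-cohomology estimate, one upgrades $S_2$ + CM-in-low-codimension to full maximal Cohen--Macaulayness. Alternatively, and more cleanly, one observes that for small $\phi$ one has $\RHom_X(M, \stsh_X) \simeq R\phi_*\RHom_Y(T, \stsh_Y)$ concentrated in degree $0$ by the partial tilting condition together with the smallness (which kills the higher pushforwards that would otherwise obstruct), so the same MCM-criterion lemma quoted before Corollary \ref{cor CMness} applies to $M$ directly.

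The main obstacle I expect is precisely this last reduction: making rigorous that smallness of $\phi$ forces $\phi_*T$ to be MCM. The subtlety is that without goodness we lose the direct summand $\stsh_Y \subset T$ that let us apply the cited lemma to conclude $H^{>0}(Y,T) = 0$ and $\Ext^{>0}_Y(T,\stsh_Y) = 0$ for the summand; for a general tilting bundle these vanishings need not hold summand-by-summand, so one must instead work with $T$ as a whole and use that $\Lambda = \End_Y(T)$ being MCM (already known) plus reflexivity of $M$ plus the codimension-$\geq 2$ condition on the exceptional locus pins down $M$ — a slightly delicate commutative-algebra argument that I would carry out by comparing $M$ with its reflexive hull over the smooth locus and bounding the codimension of the locus where depth can drop. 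Everything else is a matter of assembling results already in the paper.
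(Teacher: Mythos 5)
Your overall skeleton (reduce to Lemma \ref{lem wemyss} for $\End_Y(T)\simeq\End_R(M)$, Corollary \ref{cor CMness} for MCM-ness of $\End_Y(T)$, and the derived equivalence of Proposition \ref{tilting equiv} plus smoothness of $Y$ for $\gldim<\infty$) is exactly the paper's, and those parts are fine. But there is a genuine gap in how you handle the one remaining condition in the definition of an NCCR, namely that $M=\phi_*T$ be \emph{reflexive}. You assert at the outset that $\phi_*T$ is automatically reflexive because it is the pushforward of a vector bundle along a proper birational map to a normal variety. That is false: the pushforward is torsion-free but need not be reflexive, and the paper's own remark following Lemma \ref{lem tilting crep canoni} exhibits a counterexample ($T'=\stsh_Y(1)\oplus\stsh_Y(2)$ on the minimal resolution of the $A_1$ surface singularity, where $\phi_*T'$ fails to be reflexive). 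Reflexivity of $M$ is precisely where the hypothesis ``good or small'' is used: in the good case one gets it because $\phi_*T$ is maximal Cohen--Macaulay (the second part of Corollary \ref{cor CMness}, which you never invoke for this purpose), and in the small case because $M=j_*(T|_U)$ with $\codim_X(X\setminus U)\geq 2$ and $X$ normal, so $M$ is the pushforward of a locally free sheaf from the complement of a codimension-two subset --- a one-line argument.

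Having misplaced the role of the hypothesis, you then spend the small case trying to prove that $\phi_*T$ is maximal Cohen--Macaulay without goodness. This is neither needed (the NCCR definition only asks that $M$ be reflexive and that $\End_R(M)$ be MCM) nor true: on the resolved conifold, $T=\stsh_Y(1)\oplus\stsh_Y(2)$ is a tilting bundle on a small resolution, yet $\phi_*\stsh_Y(2)$ is reflexive but not MCM. Both of your proposed mechanisms break down accordingly. Smallness does not kill higher direct images --- $R^1\phi_*(T^{\vee})$ can be nonzero for a small $\phi$ (e.g.\ $H^1(Y,\stsh_Y(-2))\neq 0$ in the conifold example), so $R\phi_*\RHom_Y(T,\stsh_Y)$ need not be concentrated in degree $0$; the vanishing $\Ext^{>0}_Y(T,\stsh_Y)=0$ is exactly what goodness buys you, summand by summand. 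And the commutative-algebra upgrade ``$S_2$ plus CM away from a codimension $\geq 2$ locus implies MCM'' is false in dimension $\geq 3$ (same counterexample). Replacing this entire discussion by the codimension-two extension argument for reflexivity closes the gap and recovers the paper's proof.
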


\begin{proof}
If $T$ is good tilting bundle, then the push forward $M = \phi_*T$ is Cohen-Macaulay by Corollary \ref{cor CMness} and hence reflexive (\cite[Proposition 2.8]{H17a}).

If the resolution $\phi$ is small, then $M = H^0(U, T|_U)$ where $U$ is the largest common open subset of $Y$ and $X$.
Since $X \setminus U$ has codimension two in $X$, $M$ is also reflexive in this case.

Then the result follows from Proposition \ref{tilting equiv}, Corollary \ref{cor CMness} and  Lemma \ref{lem wemyss}.
\end{proof}

\subsection{Local cohomologies}

Let $X$ be a topological space and $\sh$ an abelian sheaf on $X$.
For a closed subset $Y$ of $X$, we define
\[ \Gamma_Y(\sh) = \Gamma_Y(X, \sh) := \{ s \in \Gamma(X, \sh) \mid \Supp(s) \subset Y \}. \]
This $\Gamma_Y$ defines a left exact functor
\[ \Gamma_Y : \Sh(X) \to (\mathrm{Ab}), \]
from the category of abelian sheaves on $X$ to the category of abelian groups,
and we denote the right derived functor of $\Gamma_Y$ by $H^p_Y$ or $H^p_Y(X,-)$.

Similarly, we define an abelian sheaf $\underline{\Gamma}_Y(\sh) = \underline{\Gamma}_Y(X, \sh)$ by
\[ \underline{\Gamma}_Y(\sh)(U) = \underline{\Gamma}_Y(X, \sh)(U) := \Gamma_{U \cap Y}(U, \sh|_{U}) \]
for an open subset $U \subset X$.
Then, the functor
\[ \underline{\Gamma}_Y(-) : \Sh(X) \ni \sh \mapsto \underline{\Gamma}_Y(\sh) \in \Sh(X) \]
is a left exact functor.
We denote the right derived functor of $\underline{\Gamma}_Y(-)$ by $\mathcal{H}^p_Y(-)$.

In the rest of the present subsection, we provide some basic properties of local cohomologies.

\begin{lem}[{\cite[Corollary 1.9]{Har1}}] \label{loc coho lem}
Let $X$ be a topological space, $Z \subset X$ a closed subset, $U := X \setminus Z$ the complement of $Z$, and $j : U \hookrightarrow X$ the open immersion.
Then, for any abelian sheaf $\sh$ on $X$, there are  exact sequences
\[ 0 \to \Gamma_Z(X, \sh) \to \Gamma(X, \sh) \to \Gamma(U, \sh) \to H_Z^1(X, \sh) \to H^1(X, \sh) \to H^1(U, \sh) \to \cdots \]
and
\[ 0 \to \mathcal{H}^0_Z(\sh) \to \sh \to j_*(\sh|_U) \to \mathcal{H}^1_Z(\sh) \to 0. \]
In addition, there are functorial isomorphisms
\[ R^pj_*(\sh|_U) \simeq \mathcal{H}^{p+1}_Z(\sh) \]
for $p \geq 1$.
\end{lem}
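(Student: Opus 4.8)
The plan is to deduce all three displayed assertions from a single short exact sequence of complexes built from one injective resolution. First I would choose an injective resolution $\sh \to I^\bullet$ in $\Sh(X)$ and record two standard facts. Since the open immersion $j$ admits an exact left adjoint $j_!$, the restriction functor $j^{-1} = (-)|_U \colon \Sh(X) \to \Sh(U)$ preserves injective objects; hence $I^\bullet|_U$ is an injective resolution of $\sh|_U$ on $U$, so that $R^p j_*(\sh|_U) = H^p(j_*(I^\bullet|_U))$ and $H^p(U,\sh|_U) = H^p(\Gamma(U,I^\bullet|_U))$. Secondly, every injective sheaf is flasque, so for each $n$ and each open $V \subset X$ the restriction map $\Gamma(V,I^n) \to \Gamma(V \cap U, I^n)$ is surjective.

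Next I would establish, for an arbitrary injective sheaf $I$ on $X$, the short exact sequence
\[ 0 \to \underline{\Gamma}_Z(I) \to I \to j_*(I|_U) \to 0. \]
Left exactness is immediate: the morphism $I \to j_* j^{-1} I = j_*(I|_U)$ is the adjunction unit and its kernel is, by definition, the subsheaf $\underline{\Gamma}_Z(I)$ of sections supported on $Z$. For surjectivity it suffices to check on sections over each open $V \subset X$, where the morphism in question is the restriction $\Gamma(V,I) \to \Gamma(V \cap U, I) = \Gamma(V, j_*(I|_U))$, which is surjective by flasqueness of $I$. Applying this termwise to $I^\bullet$ produces a short exact sequence of complexes $0 \to \underline{\Gamma}_Z(I^\bullet) \to I^\bullet \to j_*(I^\bullet|_U) \to 0$. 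Its long exact sequence of cohomology sheaves, computed using $H^p(\underline{\Gamma}_Z(I^\bullet)) = \mathcal{H}^p_Z(\sh)$, the vanishing of $H^p(I^\bullet)$ for $p>0$ together with $H^0(I^\bullet)=\sh$, and $H^p(j_*(I^\bullet|_U)) = R^p j_*(\sh|_U)$, begins
\[ 0 \to \mathcal{H}^0_Z(\sh) \to \sh \to j_*(\sh|_U) \to \mathcal{H}^1_Z(\sh) \to 0 \]
(using $R^0 j_* = j_*$) and continues with isomorphisms $R^p j_*(\sh|_U) \simeq \mathcal{H}^{p+1}_Z(\sh)$ for $p \geq 1$; these are precisely the last two assertions.

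For the remaining (global) long exact sequence I would apply $\Gamma(X,-)$ to the same termwise sequences. Because $\Gamma(X,\underline{\Gamma}_Z(I^n)) = \Gamma_Z(X,I^n)$ and $\Gamma(X,j_*(I^n|_U)) = \Gamma(U,I^n|_U)$, and because $\Gamma(X,I^n) \to \Gamma(U,I^n|_U)$ is surjective by flasqueness, we obtain a short exact sequence of complexes
\[ 0 \to \Gamma_Z(X,I^\bullet) \to \Gamma(X,I^\bullet) \to \Gamma(U,I^\bullet|_U) \to 0. \]
By definition $H^p(\Gamma_Z(X,I^\bullet)) = H^p_Z(X,\sh)$, while $H^p(\Gamma(X,I^\bullet)) = H^p(X,\sh)$ and, by the first paragraph, $H^p(\Gamma(U,I^\bullet|_U)) = H^p(U,\sh|_U)$; the associated long exact cohomology sequence is exactly the first sequence in the statement. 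Functoriality in $\sh$ is inherited from functoriality of injective resolutions.

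The argument is essentially formal and I do not anticipate a genuine obstacle. The only points deserving care are the two preservation facts in the first paragraph — that $j^{-1}$ sends injectives to injectives (a consequence of the exactness of $j_!$) and that injective sheaves are flasque with surjective restriction maps — together with the observation that $I \to j_*(I|_U)$ is an epimorphism of sheaves, not merely surjective on a basis of opens; the latter is immediate once flasqueness is in hand.
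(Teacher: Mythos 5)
Your argument is correct and complete; since the paper offers no proof of this lemma but simply cites \cite[Corollary 1.9]{Har1}, your derivation — one injective resolution, the termwise short exact sequence $0 \to \underline{\Gamma}_Z(I^n) \to I^n \to j_*(I^n|_U) \to 0$ obtained from flasqueness, and its two long exact sequences (of cohomology sheaves and of global sections) — is exactly the standard argument underlying the cited result. The only points needing care (injectives are flasque, $j^{-1}$ preserves injectives via the exact left adjoint $j_!$, and surjectivity of $I \to j_*(I|_U)$ as a map of sheaves) are all handled correctly.
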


\begin{defi} \rm
Let $X$ be a locally Noetherian scheme and $Y \subset X$ a closed subset.
If $\sh$ is a coherent sheaf on $X$, then the \textit{$Y$-depth} of $\sh$ is defined by
\[ \depth_Y(\sh) := \inf_{x \in Y} \depth_{\stsh_{X,x}}(\sh_x). \]
\end{defi}

\begin{prop}[{\cite[Theorem 3.8]{Har1}}] \label{loc coho prop}
Let $X$ be a locally Noetherian scheme, $Y \subset X$ a closed subset, and $\sh$ a coherent sheaf on $X$.
Let $n$ be a non-negative integer.
Then, the following are equivalent
\begin{enumerate}
\item[(i)] $\mathcal{H}_Y^i(\sh) = 0$ for all $i < n$.
\item[(ii)] $\depth_Y(\sh) \geq n$.
\end{enumerate}
\end{prop}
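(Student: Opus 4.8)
The plan is to reduce to the affine case and then invoke the standard comparison, over a Noetherian ring, between the vanishing of local cohomology modules, grade, and depth.

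Since both conditions in the statement are local on $X$, I may replace $X$ by an affine Noetherian open subscheme, and so assume $X = \Spec A$ with $A$ Noetherian, $Y = V(I)$ for an ideal $I \subseteq A$, and $\sh = \widetilde{M}$ for a finite $A$-module $M$. In this situation $\mathcal{H}^i_Y(\sh)$ is the quasi-coherent sheaf $\widetilde{H^i_I(M)}$ associated with the $i$-th local cohomology module of $M$ with support in $V(I)$: local cohomology of sheaves on $X$ is computed from flasque resolutions, an injective $A$-module $E$ has $\widetilde{E}$ flasque since $A$ is Noetherian, $\widetilde{(-)}$ is exact, and $\underline{\Gamma}_Y(\widetilde{E}) = \widetilde{\Gamma_I(E)}$ because local cohomology commutes with localization. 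As $\widetilde{(-)}$ is faithful on finite $A$-modules, condition (i) becomes the vanishing $H^i_I(M) = 0$ for all $i < n$.

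The remaining ingredient is the commutative-algebra identity
\[ \inf\{\, i : H^i_I(M) \neq 0 \,\} \;=\; \operatorname{grade}(I, M) \;=\; \inf_{\mathfrak{p} \in V(I)} \depth_{A_{\mathfrak{p}}}(M_{\mathfrak{p}}), \]
valid for any Noetherian $A$, ideal $I$, and finite module $M$. For the first equality one combines the Ext-description $\operatorname{grade}(I,M) = \inf\{\, i : \operatorname{Ext}_A^i(A/I,M) \neq 0 \,\}$ with $H^i_I(M) = \varinjlim_t \operatorname{Ext}_A^i(A/I^t, M)$, using that the vanishing of $\operatorname{Ext}^i(A/I,M)$ in low degrees propagates to $\operatorname{Ext}^i(A/I^t,M)$ along the filtration of $A/I^t$ whose graded pieces are quotients of free $A/I$-modules. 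For the second equality, the inequality $\operatorname{grade}(I,M) \leq \depth_{A_{\mathfrak{p}}}(M_{\mathfrak{p}})$ holds for every $\mathfrak{p} \in V(I)$, and equality of the two infima is achieved by localizing a maximal $M$-regular sequence $\underline{x} \subseteq I$ at a prime of $V(I)$ associated to $M/\underline{x}M$. Since the primes of $A$ containing $I$ are precisely the points of $Y$, with $M_{\mathfrak{p}} = \sh_x$ there, the right-hand side equals $\inf_{x \in Y}\depth_{\stsh_{X,x}}(\sh_x) = \depth_Y(\sh)$; hence (i) holds if and only if $\depth_Y(\sh) \geq n$, which is (ii). I expect the delicate step to be the second equality above: $\operatorname{grade}(I,M)$ is defined through a maximal regular sequence inside $I$ and is not manifestly an infimum of pointwise depths, so one has to argue via associated primes — and it is exactly this identity that reconciles the pointwise definition of $\depth_Y$ with the cohomological condition (i).
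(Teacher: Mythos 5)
The paper gives no proof of this proposition at all: it is imported verbatim from \cite[Theorem 3.8]{Har1}, so there is nothing internal to compare against. Your argument is a correct and complete reconstruction of the standard proof behind that citation --- reduce to $X=\Spec A$ affine Noetherian, identify $\mathcal{H}^i_Y(\widetilde M)$ with $\widetilde{H^i_I(M)}$ via flasque resolutions by injective modules, and then use the two identities $\inf\{i: H^i_I(M)\neq 0\}=\operatorname{grade}(I,M)=\inf_{\mathfrak p\in V(I)}\depth_{A_{\mathfrak p}}(M_{\mathfrak p})$ --- and all the ingredients you invoke (Rees's Ext-characterization of grade, $H^i_I=\varinjlim\Ext^i(A/I^t,-)$, the filtration argument, and the associated-prime argument for the second identity) are sound. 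The only cosmetic slip is the phrase ``faithful on finite $A$-modules'': $H^i_I(M)$ is generally not finite, but $\widetilde{(-)}$ is faithful on all $A$-modules, which is what the step actually uses.
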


\begin{cor} \label{loc coho cor1}
Let $X$ be a locally Noetherian scheme, $Y \subset X$ a closed subset, and $\sh$ a coherent sheaf on $X$.
Assume that $\depth_Y(\sh) \geq n$.
Then we have $H^i_Y(X, \sh) = 0$ for $i < n$.
\end{cor}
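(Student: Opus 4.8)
The plan is to derive the vanishing of the \emph{groups} $H^i_Y(X,\sh)$ from the vanishing of the \emph{sheaves} $\mathcal{H}^i_Y(\sh)$, the latter being supplied directly by the depth hypothesis. Concretely, by Proposition \ref{loc coho prop} the assumption $\depth_Y(\sh) \geq n$ is equivalent to $\mathcal{H}^i_Y(\sh) = 0$ for all $i < n$, so it suffices to show that this local vanishing propagates to global local cohomology in the same range of degrees.

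To make that propagation precise I would use the factorization $\Gamma_Y(X,-) = \Gamma(X,-) \circ \underline{\Gamma}_Y(-)$ of left exact functors on $\Sh(X)$. An injective abelian sheaf is flasque, and $\underline{\Gamma}_Y$ carries a flasque sheaf to a flasque sheaf (over an open $U$ its sections are $\Gamma_{U\cap Y}(U,-|_U) \subset \Gamma(U,-|_U)$, and one checks the restriction maps remain surjective), while flasque sheaves are acyclic for $\Gamma(X,-)$. Hence $\underline{\Gamma}_Y$ sends injectives to $\Gamma(X,-)$-acyclic objects, and the Grothendieck spectral sequence of a composite of functors gives
\[ E_2^{p,q} = H^p\!\bigl(X, \mathcal{H}^q_Y(\sh)\bigr) \;\Longrightarrow\; H^{p+q}_Y(X, \sh). \]

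Now fix $i < n$. Every entry $E_2^{p,q}$ contributing to total degree $i$ has $q \leq p+q = i < n$, so $\mathcal{H}^q_Y(\sh) = 0$ by the first step and therefore $E_2^{p,q} = 0$. Since the abutment $H^i_Y(X,\sh)$ admits a finite filtration whose graded pieces are subquotients of the $E_\infty^{p,q}$ with $p+q=i$, which are in turn subquotients of these vanishing $E_2$-terms, we conclude $H^i_Y(X,\sh) = 0$. (Alternatively, one could argue by induction on $n$ using the four-term sequence and the isomorphisms $R^pj_*(\sh|_U) \simeq \mathcal{H}^{p+1}_Y(\sh)$ of Lemma \ref{loc coho lem} together with the Leray spectral sequence for $j$, but this amounts to the same computation.)

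I expect the only point requiring care to be the verification that $\underline{\Gamma}_Y$ preserves flasqueness, hence sends injectives to $\Gamma(X,-)$-acyclics, which is exactly what legitimizes the spectral sequence; once that is in place the conclusion is a purely formal degree count. Everything else is standard homological algebra.
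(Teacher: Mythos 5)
Your argument is correct and is essentially the paper's own proof: the paper likewise invokes the spectral sequence $E_2^{p,q} = H^p(X, \mathcal{H}_Y^q(\sh)) \Rightarrow H^{p+q}_Y(X,\sh)$ together with Proposition \ref{loc coho prop} and concludes by the same degree count. Your extra verification that $\underline{\Gamma}_Y$ preserves flasqueness, which legitimizes the Grothendieck spectral sequence, is a detail the paper leaves implicit but is exactly the right point to check.
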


\begin{proof}
Let us consider a spectral sequence
\[ E_2^{p,q} = H^p(X, \mathcal{H}_Y^q(\sh)) \Rightarrow H^{p+q}_Y(X, \sh). \]
By assumption we have $E_2^{p,q} = 0$ if $q < n$.
Thus we have the result.
\end{proof}

\begin{cor} \label{loc coho cor2}
Let $X$ be a locally Noetherian scheme, $Y \subset X$ a closed subset, and $\sh$ a coherent sheaf on $X$.
Put $U := X \setminus Y$ and assume that $\depth_Y(\sh) \geq n$.
Then the canonical map
\[ H^i(X, \sh) \to H^i(U, \sh) \]
is an isomorphism for $i \leq n -2$ and
\[ H^{n-1}(X, \sh) \to H^{n-1}(U, \sh) \]
is injective.
\end{cor}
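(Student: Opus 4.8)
The plan is to deduce this corollary from the long exact sequence relating cohomology on $X$, cohomology on $U$, and local cohomology with support in $Y$, which is exactly the first exact sequence in Lemma \ref{loc coho lem}. That sequence reads
\[ \cdots \to H^i_Y(X,\sh) \to H^i(X,\sh) \to H^i(U,\sh) \to H^{i+1}_Y(X,\sh) \to \cdots \]
so the map $H^i(X,\sh) \to H^i(U,\sh)$ has kernel a quotient of $H^i_Y(X,\sh)$ and cokernel a subobject of $H^{i+1}_Y(X,\sh)$. Hence to prove the isomorphism for $i \le n-2$ it suffices to show $H^i_Y(X,\sh) = 0$ for $i \le n-1$, and to prove injectivity at $i = n-1$ it suffices to show $H^{n-1}_Y(X,\sh) = 0$.

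Both vanishing statements are immediate from Corollary \ref{loc coho cor1}: the hypothesis $\depth_Y(\sh) \ge n$ gives $H^i_Y(X,\sh) = 0$ for all $i < n$, i.e. for $i \le n-1$. So first I would invoke Corollary \ref{loc coho cor1} to get $H^i_Y(X,\sh) = 0$ for $i \le n-1$; then I would feed this into the long exact sequence of Lemma \ref{loc coho lem}. For $i \le n-2$ both the term $H^i_Y$ on the left and the term $H^{i+1}_Y$ on the right vanish (since $i+1 \le n-1$), so exactness forces $H^i(X,\sh) \xrightarrow{\sim} H^i(U,\sh)$. For $i = n-1$, only the left term $H^{n-1}_Y(X,\sh)$ is guaranteed to vanish (the right term $H^n_Y$ need not), so exactness of $0 \to H^{n-1}_Y = 0 \to H^{n-1}(X,\sh) \to H^{n-1}(U,\sh)$ gives injectivity but nothing more.

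There is essentially no obstacle here; the only things to be slightly careful about are purely bookkeeping: checking that the indices line up (that $i \le n-2$ really does make $i+1 \le n-1$, so that Corollary \ref{loc coho cor1} applies to the higher term as well), and noting that Lemma \ref{loc coho lem} is stated for an arbitrary abelian sheaf on a topological space, so it certainly applies to a coherent sheaf $\sh$ on the locally Noetherian scheme $X$ with $Z = Y$. No new ideas beyond assembling the two cited results are needed.
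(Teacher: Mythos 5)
Your argument is correct and is exactly the paper's proof: the paper likewise deduces the corollary from the long exact sequence of Lemma \ref{loc coho lem} together with the vanishing $H^i_Y(X,\sh)=0$ for $i<n$ supplied by Corollary \ref{loc coho cor1}, only stated more tersely. The index bookkeeping you spell out is precisely what the paper leaves implicit.
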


\begin{proof}
This corollary follows from Lemma \ref{loc coho lem} and Corollary \ref{loc coho cor1}.
\end{proof}

\subsection{Deformation of schemes and lift of coherent sheaves}

\begin{defi} \rm
Let $X_0$ be a scheme.
A \textit{deformation} of $X_0$ over a pointed scheme $(S, s)$ is a flat morphism $\rho : X \to (S,s)$ such that $X \otimes_{S} \Bbbk(s) \simeq X_0$.
Let $j_s : X_0 \to X$ be the closed immersion.
A deformation of $X_0$ will be denoted by
\[ (\rho : X \to (S,s), j_s : X_0 \to X). \]
If $S$ is the spectrum of a local ring $D$ and $s$ is a closed point corresponding to the unique maximal ideal,
we say that $X$ is a deformation over $D$ for short.
\end{defi}

Let us consider the category $\Art$ of local Artinian $\CC$-algebras with residue field  \nolinebreak $\CC$.
Note that, for any object $A \in \Art$, $A$ is finite dimensional as a $\CC$-vector space.

We say that a deformation $(\rho : X \to (S,s), j_s : X_0 \to X)$ of $X_0$ is \textit{infinitesimal} if $S$ is the spectrum of $A \in \Art$ and $s$ is the closed point corresponding to a unique maximal ideal $\mathfrak{m}_A \subset A$.

\begin{defi} \rm
For a scheme $X$, we define a functor
\[ \Def X : \Art^{\mathrm{op}} \to (\mathrm{Sets}) \]
by
\[ (\Def X)(A) := \{ \textrm{isom class of deformation of $X_0$ over $A$} \} \]
for $A \in \Art$.
It is easy to see that $\Def X$ actually defines a functor.
We call this functor $\Def X$ the \textit{deformation functor} of $X$ (or the local moduli functor of $X$).
\end{defi}

\begin{defi} \rm
Let $X_0$ and $Y_0$ be schemes, $\phi_0 : X_0 \to Y_0$ a morphism, and $(S, s)$ a pointed scheme.
Let $(\rho : X \to (S,s), j_s : X_0 \to X)$ and $(\pi : Y \to (S,s), i_s : Y_0 \to Y)$ be deformations over $(S,s)$.
We say that a morphism of $S$-schemes $\phi : X \to Y$ is a \textit{deformation} of $\phi_0$ if the diagram
\[ \begin{tikzcd}
X_0 \arrow[r, "j_s"] \arrow[d, "\phi_0"'] & X \arrow[d, "\phi"] \\
Y_0 \arrow[r, "i_s"] & Y
\end{tikzcd} \]
is cartesian.
\end{defi}

\begin{defi} \rm
Let $X_0$ be a scheme and $(\rho : X \to (S,s), j_s : X_0 \to X)$ a deformation of $X_0$.
Let $\sh_0$ be a coherent sheaf on $X_0$.
A \textit{lift} of $\sh_0$ to $X$ is a coherent sheaf $\sh$ on $X$ that is flat over $S$ and $j_s^*\sh \simeq \sh_0$.
\end{defi}

For more information on deformations of schemes and lifts of coherent sheaves, see \cite{Art76, Har2, Ser}.

%%%%%%%%%%%%%%%%%%%%%%%%%%%%%%%%%%%%%%%%%%%%%%%%%%%%%%%%%%%%%%%%%%%%%%%
\section{Tilting-type equivalence} \label{sect tilting-type equiv}

In this section, we provide a precise definition of (good or strict) tilting-type equivalences and study their properties.

\begin{prop} \label{def-prop tilting-type}
Let $Y$ and $Y'$ be Noetherian schemes that are projective over an affine variety $X = \Spec R$.
If there are tilting bundle $T$ and $T'$ on $Y$ and $Y'$, respectively, and an $R$-algebra isomorphism
\[ \End_Y(T) \simeq \End_{Y'}(T'), \]
then we have derived equivalences
\[ \D(Y) \simeq \D(\End_Y(T)) \simeq \D(\End_{Y'}(T')) \simeq \D(Y'). \]
\end{prop}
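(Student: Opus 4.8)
The proof is essentially a formal consequence of Proposition \ref{tilting equiv} applied twice, together with the given algebra isomorphism. The plan is as follows.

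First I would apply Proposition \ref{tilting equiv} to the scheme $Y$, which is projective over the affine scheme $X = \Spec R$, and to the tilting bundle $T$ on $Y$. This yields an exact equivalence of triangulated categories
\[ \Psi_T := \RHom_Y(T, -) : \D(Y) \xrightarrow{\sim} \D(\End_Y(T)). \]
Next, I would apply the same proposition to $Y'$ and the tilting bundle $T'$, obtaining an equivalence
\[ \Psi_{T'} := \RHom_{Y'}(T', -) : \D(Y') \xrightarrow{\sim} \D(\End_{Y'}(T')). \]

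The remaining ingredient is the middle equivalence. The given $R$-algebra isomorphism $\End_Y(T) \simeq \End_{Y'}(T')$ induces, by restriction and extension of scalars along this isomorphism, an equivalence of module categories $\modu(\End_Y(T)) \simeq \modu(\End_{Y'}(T'))$, hence an exact equivalence of their bounded derived categories $\D(\End_Y(T)) \simeq \D(\End_{Y'}(T'))$. Composing the three equivalences gives
\[ \D(Y) \xrightarrow{\sim} \D(\End_Y(T)) \xrightarrow{\sim} \D(\End_{Y'}(T')) \xrightarrow{\sim} \D(Y'), \]
which is exactly the chain of derived equivalences in the statement.

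There is essentially no obstacle here: the only point requiring a word of care is that Proposition \ref{tilting equiv} is stated as giving an equivalence $\mathrm{D}^-(Y) \to \mathrm{D}^-(\End_Y(T))$ that restricts to an equivalence on the bounded derived categories, so one should simply invoke the bounded version directly. Everything else is the formal composition of equivalences and the elementary observation that an isomorphism of rings induces an equivalence between the corresponding (derived) module categories. I would therefore keep the proof to a few lines.
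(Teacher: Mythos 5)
Your proof is correct and follows exactly the route the paper intends: the paper gives no separate argument for this proposition, treating it as an immediate consequence of Proposition \ref{tilting equiv} applied to each of $Y$ and $Y'$, combined with the evident equivalence of module (hence bounded derived) categories induced by the given $R$-algebra isomorphism. Nothing is missing.
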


The following is the definition of tilting-type equivalences.

\begin{defi} \rm
Let $Y$ and $Y'$ are schemes that are projective over an affine variety $X = \Spec R$.

A \textit{tilting-type equivalence} between $\D(Y)$ and $\D(Y')$ is an equivalence of derived categories constructed as in Proposition \ref{def-prop tilting-type}.
To emphasize tilting bundles that are used to construct the equivalence, 
we also say that the tilting-type equivalence $\D(Y) \xrightarrow{\sim} \D(Y')$ is determined by $(T, T')$.

We say that a tilting-type equivalence between $\D(Y)$ and $\D(Y')$ is \textit{good} if the tilting bundles $T$ and $T'$ are good.

Assume that the morphisms $Y \to X$ and $Y' \to X$ are birational and let $U$ be the largest common open subset of $X$, $Y$, and $Y'$.
Under these conditions, we say that a tilting-type equivalence between $\D(Y)$ and $\D(Y')$ is \textit{strict} if we have
\[ T|_U \simeq T'|_U. \]
\end{defi}

\begin{lem}
Let $Y$ and $Y'$ be schemes that are projective over an affine variety $X = \Spec R$.
 Assume in addition that $Y$ and $Y'$ are Cohen-Macaulay, the morphisms $Y \to X$ and $Y' \to X$ are birational,
and they have the largest common open subscheme $U$ such that $U \neq \emptyset$ and 
\[ \codim_Y(Y \setminus U) \geq 2,  \codim_{Y'}(Y' \setminus U) \geq 2. \]
Suppose that there are tilting bundle $T$ and $T'$ on $Y$ and $Y'$, respectively, such that
\[ T|_U \simeq T'|_U. \]
Then there is a strict tilting-type equivalence
\[ \D(Y) \simeq \D(\End_Y(T)) \simeq \D(\End_{Y'}(T')) \simeq \D(Y'). \]
\end{lem}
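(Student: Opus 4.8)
The plan is to reduce the algebra isomorphism $\End_Y(T)\simeq\End_{Y'}(T')$ to a statement about reflexive $R$-modules, so that Proposition~\ref{def-prop tilting-type} applies. First I would restrict everything to the common open subset $U$. Writing $j:U\hookrightarrow Y$ and $j':U\hookrightarrow Y'$ for the open immersions, the hypothesis $T|_U\simeq T'|_U$ gives an isomorphism of $\stsh_U$-algebras $\EEnd_U(T|_U)\simeq\EEnd_U(T'|_U)$. Pushing forward to $X$ along the (birational) morphisms $Y\to X$ and $Y'\to X$, and using that $U$ is also identified with an open subset of $X$ whose complement has codimension $\geq 2$, I get an isomorphism of $R$-algebras after restricting to $U\subset X$; the task is to show this extends to an isomorphism on all of $X$, i.e. that $\End_Y(T)$ and $\End_{Y'}(T')$ are both recovered as the pushforward of $\EEnd_U(T|_U)$ from $U$ to $X$.

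The key step is the following reflexivity/extension argument. By Lemma~\ref{lem wemyss}, $\End_Y(T)\simeq\End_X(\phi_*T)$ and $\End_{Y'}(T')\simeq\End_X(\phi'_*T')$, where $\phi,\phi'$ denote the structure morphisms to $X$. Since $Y$ is Cohen--Macaulay and $T$ is a tilting bundle, Corollary~\ref{cor CMness} (together with Lemma~\ref{lem wemyss}) shows $\End_Y(T)$ is maximal Cohen--Macaulay over $R$, hence in particular reflexive as an $R$-module; the same holds for $\End_{Y'}(T')$. A reflexive module over a normal domain is determined by its restriction to any open subset whose complement has codimension $\geq 2$: concretely, for a reflexive sheaf $\sh$ on $X$ and such an open $U\subset X$, the natural map $\sh\to (\iota_U)_*(\sh|_U)$ is an isomorphism, where $\iota_U:U\hookrightarrow X$. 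Here the relevant open subset of $X$ is the image of $U$, and the condition $\codim_Y(Y\setminus U)\geq 2$ together with birationality forces $\codim_X(X\setminus U)\geq 2$ as well (the center of $Y\setminus U$ cannot be a divisor because $\phi$ is birational and $Y$ is normal — normality of $Y$ follows from Cohen--Macaulayness plus regularity in codimension one, which holds since $\phi$ is an isomorphism over the codimension-$\leq 1$ part). Therefore
\[
\End_Y(T)\;\simeq\;(\iota_U)_*\bigl(\End_Y(T)|_U\bigr)\;\simeq\;(\iota_U)_*\,\EEnd_U(T|_U)\;\simeq\;(\iota_U)_*\,\EEnd_U(T'|_U)\;\simeq\;\End_{Y'}(T'),
\]
and one checks this chain of isomorphisms is compatible with the $R$-algebra structures, since each map is induced by the canonical restriction/extension maps which are ring homomorphisms.

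Finally, feeding the resulting $R$-algebra isomorphism $\End_Y(T)\simeq\End_{Y'}(T')$ into Proposition~\ref{def-prop tilting-type} produces the derived equivalences
\[
\D(Y)\simeq\D(\End_Y(T))\simeq\D(\End_{Y'}(T'))\simeq\D(Y'),
\]
and by construction the underlying tilting bundles are $T$ and $T'$, which agree on $U$; hence the equivalence is strict by definition. The main obstacle I anticipate is the bookkeeping in the extension step: one must be careful that the isomorphism $\EEnd_U(T|_U)\simeq\EEnd_U(T'|_U)$ is the one induced by a \emph{fixed} isomorphism $T|_U\simeq T'|_U$ (so that it is an algebra map, not merely an additive one), and that pushing forward along $\iota_U$ genuinely recovers $\End_Y(T)$ rather than some larger reflexive hull — this is exactly where maximal Cohen--Macaulayness from Corollary~\ref{cor CMness} and the codimension hypothesis are both essential, and where the Cohen--Macaulay hypothesis on $Y$ and $Y'$ (as opposed to mere normality) is used.
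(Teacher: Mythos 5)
There is a genuine gap: your key step imports results whose hypotheses are not available in this lemma. You invoke Corollary~\ref{cor CMness} and Lemma~\ref{lem wemyss} to conclude that $\End_Y(T)$ is maximal Cohen--Macaulay (hence reflexive) over $R$ and equal to $\End_X(\phi_*T)$, but both of those statements are proved only for a \emph{crepant resolution} $\phi\colon Y\to X$ of a \emph{normal Gorenstein} affine variety --- their proofs use crepancy and rationality of the singularities. In the present lemma $X$ is merely an affine variety and $Y,Y'$ are merely Cohen--Macaulay schemes projective and birational over it, so there is no reason for $\End_Y(T)$ to be maximal Cohen--Macaulay, or even reflexive, as an $R$-module. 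Moreover, the extension principle you then apply (a reflexive sheaf on $X$ is recovered as $(\iota_U)_*$ of its restriction to an open set with complement of codimension $\geq 2$) requires $R$ to be normal, which is also not assumed. So the chain $\End_Y(T)\simeq(\iota_U)_*(\End_Y(T)|_U)$ is unjustified as written.

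The fix is to avoid $X$ altogether and run the depth argument on $Y$ and $Y'$ themselves, which is what the paper does. Since $T^{\vee}\otimes T$ is locally free and $Y$ is Cohen--Macaulay, its depth along the closed subset $Y\setminus U$ equals the local dimension there, which is $\geq\codim_Y(Y\setminus U)\geq 2$; Corollary~\ref{loc coho cor2} (with $n=2$) then gives directly
\[ \End_Y(T)\simeq H^0(Y,T^{\vee}\otimes T)\simeq H^0(U,T^{\vee}\otimes T), \]
and likewise for $Y'$. A fixed isomorphism $T|_U\simeq T'|_U$ identifies the middle terms as $R$-algebras, and Proposition~\ref{def-prop tilting-type} finishes the proof. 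Note that this is where the Cohen--Macaulay hypothesis on $Y$ and $Y'$ is actually used --- to guarantee the depth bound for the locally free sheaf $T^{\vee}\otimes T$ on $Y$ --- not to produce maximal Cohen--Macaulayness of $\End_Y(T)$ over $R$. Your final paragraph about fixing the isomorphism $T|_U\simeq T'|_U$ so that the induced map on endomorphism algebras is a ring map is a correct and worthwhile observation.
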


\begin{proof}
By Proposition \ref{tilting equiv} we have equivalences
\[ \D(Y) \simeq \D(\End_Y(T)) ~ \text{and} ~ \D(Y') \simeq \D(\End_{Y'}(T')). \]
By Corollary \ref{loc coho cor2}, we have $R$-algebra isomorphisms
%\[ H^0(U, T^{\vee} \otimes T) \simeq H^0(X, M) \]
\[ \End_Y(T) \simeq H^0(Y, T^{\vee} \otimes T) \simeq H^0(U, T^{\vee} \otimes T) \simeq H^0(Y', T'^{\vee} \otimes T') \simeq \End_{Y'}(T'). \]
Combining the above, we have the result.
\end{proof}

Under the following nice condition, the same thing holds if $\codim_Y(Y \setminus U) = 1$.

\begin{lem} \label{lem tilting crep canoni}
Let $X = \Spec R$ be a normal Gorenstein affine variety of dimension greater than or equal to two,
and let $\phi : Y \to X$ and $\phi' : Y' \to X$ be two crepant resolutions of $X$.
Put $U := X_{\mathrm{sm}} = Y \setminus \exc(\phi) = Y' \setminus \exc(\phi')$.
Assume that there are tilting bundles $T$ and $T'$ on $Y$ and $Y'$, respectively, such that
\[ T|_U \simeq T'|_U. \]
Then there is a strict tilting-type equivalence
\[ \D(Y) \simeq \D(\End_Y(T)) \simeq \D(\End_{Y'}(T')) \simeq \D(Y'). \]
Furthermore, if $T$ and $T'$ are good, then we have an $R$-module isomorphism
\[ \phi_* T \simeq \phi'_* T'. \]
\end{lem}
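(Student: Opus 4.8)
The plan is to reduce everything to the affine base $X$ via push-forward, just as in the proof of Lemma \ref{lem wemyss}, and then exploit that $X\setminus U=\Sing(X)$ has codimension at least two. First I would invoke Proposition \ref{tilting equiv} to obtain the two outer equivalences $\D(Y)\simeq\D(\End_Y(T))$ and $\D(Y')\simeq\D(\End_{Y'}(T'))$, so that the only thing left is to produce an $R$-algebra isomorphism $\End_Y(T)\simeq\End_{Y'}(T')$. For this I would compare both sides on the common open locus $U$. Since $\codim_X(X\setminus U)\ge 2$ and $X$ is normal, the restriction $U\hookrightarrow X$ induces an isomorphism on global sections of any reflexive sheaf; equivalently, by Corollary \ref{loc coho cor2} applied with $n\ge 2$ (using that a normal variety has depth $\ge 2$ along any closed subset of codimension $\ge 2$), the restriction map $H^0(X,\mathcal G)\to H^0(U,\mathcal G|_U)$ is an isomorphism for $\mathcal G$ reflexive. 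The key point is that $\End_Y(T)$, viewed as the $R$-module $H^0(Y,T^\vee\otimes T)$, is reflexive: this is Corollary \ref{cor CMness}, which tells us $\End_Y(T)$ is maximal Cohen--Macaulay, hence reflexive, over the Gorenstein ring $R$. The same holds for $\End_{Y'}(T')$.

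With this in hand the argument runs as follows. By Lemma \ref{lem wemyss} we have $\End_Y(T)\simeq\End_R(\phi_*T)$ and $\End_{Y'}(T')\simeq\End_R(\phi'_*T')$ as $R$-algebras. Now restrict to $U$: since $\phi$ and $\phi'$ are isomorphisms over $U$ and $T|_U\simeq T'|_U$, we get an isomorphism of $\mathcal O_U$-algebras $\EEnd_U(T|_U)\simeq\EEnd_U(T'|_U)$, hence an isomorphism of $R$-algebras after taking $H^0(U,-)$. Then I would assemble the chain
\[
\End_Y(T)\xrightarrow{\ \sim\ }H^0(U,\EEnd(T|_U))\xrightarrow{\ \sim\ }H^0(U,\EEnd(T'|_U))\xleftarrow{\ \sim\ }\End_{Y'}(T'),
\]
where the outer arrows are the restriction isomorphisms coming from reflexivity plus $\codim_X(X\setminus U)\ge 2$, and the middle arrow is induced by $T|_U\simeq T'|_U$. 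One must check this chain is a ring isomorphism, not merely an isomorphism of $R$-modules, but each arrow is manifestly multiplicative: restriction of sections is a ring map, and the middle map is conjugation by the chosen isomorphism $T|_U\xrightarrow{\sim}T'|_U$. Feeding the resulting $R$-algebra isomorphism into Proposition \ref{def-prop tilting-type} yields the strict tilting-type equivalence, and strictness is built in since we used $T|_U\simeq T'|_U$.

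For the last assertion, suppose $T$ and $T'$ are good. Then by Corollary \ref{cor CMness} the $R$-modules $\phi_*T$ and $\phi'_*T'$ are maximal Cohen--Macaulay, in particular reflexive. On $U$ we have $\phi_*T|_U\simeq T|_U\simeq T'|_U\simeq\phi'_*T'|_U$ as $\mathcal O_U$-modules (using that $\phi,\phi'$ restrict to isomorphisms over $U$). Two reflexive $R$-modules that agree on the complement of a codimension $\ge 2$ closed subset are isomorphic: indeed, a reflexive module $M$ over a normal domain satisfies $M=H^0(U,M|_U)$, so the isomorphism on $U$ extends uniquely to an isomorphism $\phi_*T\simeq\phi'_*T'$ of $R$-modules. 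This completes the proof.

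The main obstacle I anticipate is bookkeeping rather than conceptual: making sure every comparison is carried out at the level of \emph{$R$-algebras} (so that Proposition \ref{def-prop tilting-type} applies) and not merely $R$-modules, and being careful that the ``restriction to $U$'' isomorphisms genuinely require only $\codim_X(\Sing X)\ge 2$ together with normality of $X$ (equivalently $S_2$), which is exactly what gives depth $\ge 2$ along $X\setminus U$ and hence lets Corollary \ref{loc coho cor2} apply to the reflexive sheaves $\EEnd(T)$, $\EEnd(T')$, $\phi_*T$, $\phi'_*T'$. A minor point to verify is that $Y$ (and $Y'$) being a crepant resolution of a Gorenstein $X$ indeed makes $\End_Y(T)$ maximal Cohen--Macaulay via Corollary \ref{cor CMness}, so that reflexivity is available; this is where the hypothesis ``$X$ normal Gorenstein, $\phi$ crepant'' is used.
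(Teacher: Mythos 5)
Your proposal is correct and follows essentially the same route as the paper: push everything down to $X$, use Corollary \ref{cor CMness} to get that $\End_Y(T)$, $\End_{Y'}(T')$ (and, in the good case, $\phi_*T$, $\phi'_*T'$) are maximal Cohen--Macaulay hence reflexive, and then extend the identification over $U$ across the codimension-$\ge 2$ locus $X\setminus U=\Sing(X)$. The paper's proof is just a terser version of your chain of isomorphisms, so no further comment is needed.
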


\begin{proof}
By Corollary \ref{cor CMness},
we have that $\End_Y(T) \simeq \phi_*(T^{\vee} \otimes T)$ is a Cohen-Macaulay $R$-module, and hence is a reflexive $R$-module.
Therefore we have $\End_Y(T) \simeq j_*(T^{\vee} \otimes T)|_U$.

If $T$ and $T'$ are good, then $\phi_*T$ and $\phi'_*T'$ are Cohen-Macaulay and hence are reflexive.
Since $\phi_*T$ and $\phi'T'$ are isomorphic in codimension one and thus we have $\phi_*T \simeq \phi_*T'$.
\end{proof}

\begin{rem} \rm
In Lemma \ref{lem tilting crep canoni}, assume in addition that
\[ \codim_Y(Y \setminus U) \geq 2,  \codim_{Y'}(Y' \setminus U) \geq 2. \]
Then the isomorphism $\phi_*T \simeq \phi_*T'$ holds without the assumption that $T$ and $T'$ are good.

On the other hand, if $\codim_Y(Y \setminus U) = 1$, then the isomorphism $\phi_*T \simeq \phi_*T'$ does not hold without the assumption that $T$ and $T'$ are good.
Let us give an example.
Put
\[ R := \CC[x,y,z]/(x^2 + y^2 + z^2). \]
Then $X := \Spec R$ admits an $A_1$-singularity at the origin $o \in X$.
Let $\phi : Y \to X$ be the minimal resolution.
The exceptional locus $E$ of $\phi$ is an irreducible divisor of $Y$, which is a $(-2)$-curve.
More explicitly, $Y$ is the total space of a sheaf $\stsh_{\PP^1}(-2)$ on $\PP^1$ and $E$ is the zero section.
If we put $\pi : Y \to \PP^1$ be the projection and $\stsh_Y(1) := \pi^*\stsh_{\PP^1}(1)$, then it is easy to see that
\[ \stsh_Y(E) \simeq \stsh_Y(-2). \]
Thus, if we put $U := Y \setminus E = X \setminus \{o\}$, then we have there is an isomorphism
\[ \stsh_U \simeq \stsh_Y(-2)|_U. \]
Let $T := \stsh_Y \oplus \stsh_Y(1)$ and $T' := \stsh_Y(1) \oplus \stsh_Y(2)$.
An easy computation shows that $T$ and $T'$ are tilting bundles on $Y$, and the discussion above shows that we have $T|_U \simeq T'|_U$.
On the other hand, there is an exact sequence on $Y$
\[ 0 \to \stsh_Y(2) \oplus \stsh_Y(1) \xrightarrow{E \oplus \id} \stsh_Y \oplus \stsh_Y(1) \to \stsh_E \to 0. \]
Applying the functor $\phi_*$ to this sequence, we have a short exact sequence
\[ 0 \to \phi_*T' \to \phi_*T \to \Bbbk(o) \to 0, \]
where $\Bbbk(o)$ is the residue field at the origin $o \in X$.
In particular, $\phi_*T'$ is not reflexive and hence we have $\phi_*T' \neq \phi_*T$.
\end{rem}

In almost all known examples, a tilting bundle (on a crepant resolution) contains a line bundle as a direct summand.
The following lemma suggests that the assumption that a tilting-type equivalence is good is not strong if the resolutions are small.

\begin{lem}
Let $X = \Spec R$ be a normal Gorenstein affine variety and assume that $X$ admits two small resolutions $\phi : Y \to X$ and $\phi' : Y' \to X$.
If there exists a strict tilting-type equivalence between $\D(Y)$ and $\D(Y')$ determined by $(T, T')$
such that $T$ contain a line bundle $L$ on $Y$ as a direct summand,
then there is a good tilting-type equivalence between $\D(Y)$ and $\D(Y')$.
\end{lem}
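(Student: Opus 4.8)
The plan is to reduce the statement to Lemma \ref{lem tilting crep canoni} by twisting both tilting bundles by the inverse of the given line bundle summand. I would begin by fixing the geometry. Since $\phi$ and $\phi'$ are small and $X$ is Gorenstein, neither resolution has an exceptional divisor, so both $\phi$ and $\phi'$ are crepant resolutions; consequently $U := X_{\mathrm{sm}}$ is the largest common open subset of $X$, $Y$, $Y'$, it equals $Y\setminus\exc(\phi) = Y'\setminus\exc(\phi')$, and by smallness $\codim_Y(Y\setminus U)\ge 2$ and $\codim_{Y'}(Y'\setminus U)\ge 2$. (If $\dim X<2$ then $X$ is smooth, the resolutions are isomorphisms, and the claim is immediate, so we may assume $\dim X\ge 2$.)

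Next, write $T\simeq L\oplus T_1$ with $L$ the given line bundle summand on $Y$. Because $Y$ and $Y'$ are smooth, hence locally factorial, and $Y\setminus U$, $Y'\setminus U$ have codimension at least two, restriction to $U$ induces isomorphisms $\Pic(Y)\xrightarrow{\sim}\Pic(U)$ and $\Pic(Y')\xrightarrow{\sim}\Pic(U)$; I let $L'$ be the line bundle on $Y'$ with $L'|_U\simeq L|_U$ (concretely, the reflexive hull of the direct image of $L|_U$ under $U\hookrightarrow Y'$, which is invertible since $Y'$ is locally factorial).

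Then I would set $S := L^{-1}\otimes T$ and $S' := L'^{-1}\otimes T'$ and check three things: (i) tensoring a tilting bundle by a line bundle again yields a tilting bundle, since $\Ext_Y^i(S,S)\simeq\Ext_Y^i(T,T)=0$ for $i>0$ and $\RHom_Y(S,F)\simeq\RHom_Y(T,L\otimes F)$, so the generation property is preserved; likewise $S'$ is a tilting bundle on $Y'$; (ii) $S\simeq\stsh_Y\oplus(L^{-1}\otimes T_1)$ contains the trivial line bundle, and similarly for $S'$, so $S$ and $S'$ are \emph{good} tilting bundles; (iii) strictness of the given equivalence gives $T|_U\simeq T'|_U$, whence $S|_U\simeq L^{-1}|_U\otimes T|_U\simeq L'^{-1}|_U\otimes T'|_U\simeq S'|_U$. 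Finally, applying Lemma \ref{lem tilting crep canoni} to the good tilting bundles $S$ and $S'$ produces a (strict) tilting-type equivalence $\D(Y)\simeq\D(Y')$, which is good because $S$ and $S'$ are good; this completes the argument.

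I do not expect a serious obstacle here: the whole argument is bookkeeping once the correct twist is identified. The only point requiring a little care is the construction of $L'$, i.e. that a line bundle on $U$ extends uniquely to a line bundle on $Y'$; this uses precisely the smallness hypothesis (complements of codimension $\ge 2$) together with local factoriality of the smooth resolutions $Y$ and $Y'$. The compatibility $S|_U\simeq S'|_U$ then follows formally from strictness and $L|_U\simeq L'|_U$, and Lemma \ref{lem tilting crep canoni} does the rest.
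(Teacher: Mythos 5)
Your argument is essentially the paper's own proof: extend $L|_U$ to a line bundle $L'$ on $Y'$ using smallness together with smoothness (local factoriality) of $Y'$, twist both tilting bundles by the inverses of $L$ and $L'$, and feed the result into Lemma \ref{lem tilting crep canoni}. The one place you are too quick is step (ii). That $S=L^{-1}\otimes T$ contains $\stsh_Y$ is immediate from the hypothesis that $L$ is a direct summand of $T$, but the phrase ``similarly for $S'$'' conceals an asymmetry: you are \emph{not} given that $L'$ is a direct summand of $T'$, and this is precisely the substantive point the paper proves. The argument is: since $L'|_U=L|_U$ is a direct summand of $T'|_U=T|_U$ by strictness, the splitting morphism $T'|_U\to L'|_U$ extends across the codimension-$\ge 2$ locus $Y'\setminus U$ (because $\lhom_{Y'}(T',L')$ is locally free, hence reflexive, on the normal variety $Y'$, so its sections on $U$ extend), and the extended morphism still splits the inclusion $L'\hookrightarrow T'$ since it does so generically and both sheaves are locally free. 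Once that is supplied, $S'$ does contain $\stsh_{Y'}$ as a summand and the rest of your reduction to Lemma \ref{lem tilting crep canoni} goes through exactly as in the paper.
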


\begin{proof}
Put $U := Y \setminus \exc(\phi) = Y' \setminus \exc(\phi')$ and let $j : U \hookrightarrow Y'$ be the open immersion.
Then, since $\phi'$ is small, $L' := j'_*(L|_U)$ is a divisorial sheaf.
Moreover, since $Y'$ is smooth, $L'$ is a line bundle on $Y'$.
Note that $L'$ is contained in $T'$ as a direct summand.
Indeed, since $L'|_U = L|_U$ is a direct summand of $T'|_U = T|_U$, the inclusion $L'|_U \hookrightarrow T'|_U$ has a splitting morphism $T'|_U \to L'|_U$.
Since $Y' \setminus U$ has codimension greater than or equal to two, this splitting extends to a splitting of the inclusion $L' \hookrightarrow T'$.
Therefore we have that
\[ (T \otimes L^{\vee})|_U \simeq (T' \otimes L'^{\vee})|_U \]
and that $T \otimes L^{\vee}$ and $T' \otimes L'^{\vee}$ are good tilting bundles.
Then, by Lemma \ref{lem tilting crep canoni}, we have the result.
\end{proof}

Next we give the definition of the lift of a tilting-type equivalence.

\begin{defi} \label{def lift tilting-type} \rm
Let $X_0 = \Spec R_0$ be an affine scheme, and let $\phi_0 : Y_0 \to X_0$ and $\phi_0' : Y_0' \to X_0$ be two projective morphisms.
Let us consider deformations of varieties and morphisms above
\[ \begin{tikzcd}
Y \arrow[r, "\phi"] \arrow[rd] & X \arrow[d] & Y' \arrow[l, "\phi'"'] \arrow[ld] \\
& (S, s) &
\end{tikzcd} \]
over an pointed scheme $(S,s)$, such that $X = \Spec R$ is affine and two morphisms $\phi$ and $\phi'$ are projective.
Let $\Phi_0 : \D(Y_0) \xrightarrow{\sim} \D(Y_0')$ be a tilting-type equivalence determined by $(T_0, T_0')$.

Under the set-up above, we say that a tilting-type equivalence $\Phi : \D(Y) \xrightarrow{\sim} \D(Y')$ is a \textit{lift} of $\Phi_0$
if $\Phi$ is determined by $(T, T')$ such that $T$ (resp. $T'$) is a lift of $T_0$ (resp. $T_0'$) to $Y$ (resp. $Y'$),
and if the algebra isomorphism $\End_Y(T) \simeq \End_Y(T')$ coincides with the algebra isomorphism $\End_{Y_0}(T_0) \simeq \End_{Y'_0}(T'_0)$
after we restrict it to $X_0$.
\end{defi}

The following is an easy observation on a lift of a tilting-type equivalence.

\begin{lem}
Under the same condition as in Definition \ref{def lift tilting-type}, let $j_s : Y_0 \hookrightarrow Y$ and $j_s' : Y'_0 \hookrightarrow Y'$ be closed immersions
associated to the deformations.
Let $\Phi : \D(Y) \xrightarrow{\sim} \D(Y')$ be a tilting-type equivalence that is a lift of a tilting-type equivalence $\Phi_0 : \D(Y_0) \xrightarrow{\sim} \D(Y'_0)$.
Then the following diagram of functors commutes
\[ \begin{tikzcd}
\D(Y_0) \arrow[d, "\Phi_0"'] \arrow[r, "(j_s)_*"] & \D(Y) \arrow[d, "\Phi"] \\
\D(Y'_0) \arrow[r, "(j'_s)_*"] & \D(Y').
\end{tikzcd} \]
\end{lem}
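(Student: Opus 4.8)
The statement to prove is the commutativity of the square relating $(j_s)_*$, $(j'_s)_*$, $\Phi_0$, and $\Phi$. The plan is to unwind the definition of a tilting-type equivalence and reduce everything to a statement about modules over the (common) endomorphism algebra. Recall that $\Phi = \RHom_Y(T,-)$ followed by the identification $\D(\End_Y(T)) \simeq \D(\End_{Y'}(T'))$ coming from the given algebra isomorphism, and then $\RHom_{Y'}(T',-)^{-1}$; similarly for $\Phi_0$. So the square decomposes into three squares: one for $\RHom_Y(T,-)$ versus $\RHom_{Y_0}(T_0,-)$ under pushforward, one for the algebra identifications, and one for the inverse of $\RHom_{Y'}(T',-)$ versus $\RHom_{Y'_0}(T'_0,-)$. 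It suffices to check the first and (the analogue of the) third, plus compatibility of the algebra isomorphisms.

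First I would treat the square
\[ \begin{tikzcd}
\D(Y_0) \arrow[d, "\RHom_{Y_0}(T_0,-)"'] \arrow[r, "(j_s)_*"] & \D(Y) \arrow[d, "\RHom_Y(T,-)"] \\
\D(\End_{Y_0}(T_0)) \arrow[r] & \D(\End_Y(T)).
\end{tikzcd} \]
The bottom arrow is restriction of scalars along $\End_Y(T) \to \End_{Y_0}(T_0)$, which is the map induced by $j_s^* T \simeq T_0$ together with the fact (from the definition of \emph{lift}) that this algebra map is the structural one. For $F_0 \in \D(Y_0)$ one computes $\RHom_Y(T, (j_s)_* F_0) \simeq \RHom_{Y_0}(Lj_s^* T, F_0)$ by adjunction (derived pullback--pushforward), and since $T$ is a vector bundle flat over $S$ and $T_0$ is its restriction, $Lj_s^* T \simeq j_s^* T \simeq T_0$; naturality of the adjunction isomorphism gives that this agrees, as a complex of $\End_Y(T)$-modules, with $\RHom_{Y_0}(T_0,F_0)$ viewed via restriction of scalars. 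The third square is the same argument with primes, applied to the quasi-inverses $(-)\otim^{\mathrm{L}}_{\End} T$; alternatively one passes to quasi-inverses in the already-established square, which automatically commutes.

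The remaining point is that the middle square, involving only the algebra isomorphisms $\End_Y(T)\simeq\End_{Y'}(T')$ and $\End_{Y_0}(T_0)\simeq\End_{Y'_0}(T'_0)$ and the two restriction-of-scalars functors to the "$0$" algebras, commutes; but this is exactly the hypothesis in Definition \ref{def lift tilting-type} that the isomorphism $\End_Y(T)\simeq\End_Y(T')$ restricts to $\End_{Y_0}(T_0)\simeq\End_{Y'_0}(T'_0)$ over $X_0$. Pasting the three commuting squares yields the claim. The main obstacle I anticipate is purely bookkeeping: making the adjunction/base-change isomorphism $\RHom_Y(T,(j_s)_*F_0)\simeq\RHom_{Y_0}(T_0,F_0)$ genuinely an isomorphism of $\End_Y(T)$-module objects (not merely of underlying complexes) and checking this identification is natural in $F_0$, so that it descends to a $2$-commutative square of functors rather than a mere objectwise isomorphism. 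Flatness of $T$ over $S$ (so $Lj_s^*T = T_0$ with no higher Tor) is what makes this clean, and no finiteness hypotheses beyond those already in force are needed.
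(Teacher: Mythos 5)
Your proposal is correct and is essentially the argument the paper gives: the paper likewise reduces everything to the adjunction isomorphism $\RHom_Y(T,(j_s)_*F)\simeq\RHom_{Y_0}(Lj_s^*T,F)\simeq\RHom_{Y_0}(T_0,F)$ together with the compatibility of the algebra isomorphisms required in the definition of a lift, and then concludes from the fact that $\RHom_{Y'}(T',-)$ is an equivalence. The only cosmetic difference is that the paper writes this as one chain of natural isomorphisms rather than as three pasted squares.
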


\begin{proof}
By adjunction and the construction of the equivalences, we have
\begin{align*}
\RHom_{Y'}(T', (j'_s)_*(\Phi_0(F))) &\simeq \RHom_{Y'_0}(T_0', \Phi_0(F)) \\
&\simeq \RHom_{Y_0}(T_0, F) \\
&\simeq \RHom_{Y}(T, (j_s)_*(F)) \\
&\simeq \RHom_{Y'}(T', \Phi((j_s)_*(F)))
\end{align*}
for all $F \in \D(Y_0)$.
Since the functor
\[ \RHom_{Y'}(T', -) : \D(Y') \to \D(\End_{Y'}(T')) \]
gives an equivalence, we have a functorial isomorphism
\[ (j'_s)_*(\Phi_0(F)) \simeq \Phi((j_s)_*(F)). \]
This shows the result.
\end{proof}

%%%%%%%%%%%%%%%%%%%%%%%%%%%%%%%%%%%%%%%%%%%%%%%%%%%%%%%%%%%%%%%%%%%%%%%
\section{Infinitesimal deformation of small resolutions} \label{sect infinitesimal}

Let $X_0 = \Spec R_0$ be a normal Gorenstein affine variety and $\phi_0 : Y_0 \to X_0$ a crepant resolution.
Throughout this section we always assume that
\[ \codim_{Y_0} \exc(\phi_0) \geq 3. \]
Under this assumption, the deformation theory behaves very well.
First, we observe the following proposition.

\begin{prop} \label{prop 3-1}
Under the conditions above, there is a functor isomorphism
\[ \Def Y_0 \simeq \Def X_0. \]
\end{prop}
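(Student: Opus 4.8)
The plan is to construct a natural transformation $\Def X_0 \to \Def Y_0$ and a natural transformation $\Def Y_0 \to \Def X_0$ and show they are mutually inverse. The first direction is the key one and uses the crepant resolution morphism $\phi_0$; the second direction uses pushforward along $\phi_0$. Throughout I will write $E_0 := \exc(\phi_0) \subset Y_0$ and $Z_0 := \Sing(X_0) \subset X_0$, so that $\phi_0$ restricts to an isomorphism $U := Y_0 \setminus E_0 \xrightarrow{\sim} X_0 \setminus Z_0$, and the standing assumption $\codim_{Y_0} E_0 \geq 3$ together with normality of $X_0$ gives $\codim_{X_0} Z_0 \geq 3$ as well (here $\phi_0$ being crepant and $Y_0$ smooth forces $Z_0$ to have the same codimension).

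\emph{From $\Def X_0$ to $\Def Y_0$.} Given $A \in \Art$ and a deformation $X \to \Spec A$ of $X_0$, I would like to produce a deformation $Y$ of $Y_0$. The first step is to deform the open part: over $U \simeq X_0 \setminus Z_0$ we simply take the restriction of $X$, i.e. the deformation $X|_{X \setminus Z}$ where $Z \subset X$ is the (flat, since $A$ is Artinian the total space is set-theoretically $X_0$) closed subset supported on $Z_0$. The heart of the argument is then to extend this deformation of $U$ across the exceptional locus. Since $\codim_{Y_0} E_0 \geq 3$, the local cohomology/depth results of the preliminary section (Proposition \ref{loc coho prop}, Corollary \ref{loc coho cor2}) show that $\stsh_{Y_0}$ has $E_0$-depth at least $3$, so $H^i(U, \stsh_U) \simeq H^i(Y_0, \stsh_{Y_0})$ for $i \le 1$. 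This is exactly what is needed to run a Grothendieck-existence / formal-extension argument: the obstruction to extending a deformation of $U$ across $E_0$ lives in $H^2_{E_0}(Y_0, \stsh_{Y_0})$-type groups, which vanish, and the extension is unique because the relevant $H^1_{E_0}$ vanishes. Concretely I would build $Y$ order by order along a filtration of $A$ by ideals killed by $\mathfrak{m}_A$: at each step the obstruction to lifting and the ambiguity of a lift are controlled by $H^2(Y_0, \stsh_{Y_0})$ and $H^1(Y_0, \stsh_{Y_0})$ respectively (these are the tangent-obstruction groups for deforming the smooth variety $Y_0$), but the corresponding groups on $U$ agree with them in the relevant degrees — the cleanest packaging is that $\Def U \to \Def Y_0$ is an isomorphism because $U \hookrightarrow Y_0$ induces isomorphisms on $H^{\le 1}(\stsh)$ and an injection on $H^2(\stsh)$. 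Then $\Def X_0 \to \Def U \xleftarrow{\sim} \Def Y_0$ gives the desired transformation (using that $U$ is open in both $X_0$ and $Y_0$, and $\phi$ extends the isomorphism over $U$ by properness: having a deformation of $Y_0$ that agrees over $U$ with the pulled-back deformation of $X_0$, one obtains $\phi$ by taking $\phi = q \circ (\text{extend over } U)$, or more precisely $X = \Spec_A \phi_* \stsh_Y$).

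\emph{From $\Def Y_0$ to $\Def X_0$.} Given a deformation $Y \to \Spec A$ of $Y_0$, set $X := \Spec_A(\phi_* \stsh_Y)$ — i.e. take the relative $\Spec$ of the pushforward of $\stsh_Y$ along the composite $Y \to \Spec A$. One must check: (i) $\phi_*\stsh_Y$ is a flat $A$-algebra whose fiber at the closed point is $R_0 = (\phi_0)_*\stsh_{Y_0}$; this uses cohomology-and-base-change together with $R^{>0}(\phi_0)_*\stsh_{Y_0} = 0$ (rationality of the crepant resolution) to get $R^{>0}\phi_*\stsh_Y = 0$ and hence flatness of $\phi_*\stsh_Y$; (ii) the induced $Y \to X$ is a deformation of $\phi_0$ and $X$ is a deformation of $X_0$. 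That these two constructions are mutually inverse follows because on the common open set $U$ everything is the identity, and both $X$ and $Y$ are determined by their restrictions to $U$ together with the extension data that is unique by the codimension-$\ge 3$ vanishing: $X_0$ is normal so $R_0 = \Gamma(X_0\setminus Z_0, \stsh) = \Gamma(U,\stsh_U)$, and this computation deforms flatly over $A$.

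\emph{Main obstacle.} The subtle point is the extension of a deformation of the open subset $U$ across the high-codimension exceptional/singular locus — i.e. the Grothendieck-existence-type statement that $\Def U \simeq \Def Y_0$ and $\Def U \simeq \Def X_0$ under the codimension hypothesis. One must be careful that $U$ is not of finite type in a way that would cause its deformation functor to misbehave, and the cleanest route is probably to phrase everything in terms of the $E_0$-depth of $\stsh_{Y_0}$ (resp. $Z_0$-depth of $\stsh_{X_0}$, which is $\ge 3$ by normality plus Serre's criterion combined with $\operatorname{codim} \ge 3$) and invoke Corollary \ref{loc coho cor2} to transfer $H^{\le 1}$ and control $H^2$; the deformation-theoretic translation (tangent space $=H^1(\stsh)$-twisted-by-nothing for a trivially-fibered affine or for $Y_0$, obstructions in $H^2$) is then essentially formal. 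I would also want to double-check that the isomorphisms are compatible with the morphism $\phi$ in the sense needed later (Definition \ref{def lift tilting-type}), but for the bare statement of this proposition that compatibility is automatic from the constructions above.
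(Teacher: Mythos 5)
Your overall strategy is the same as the paper's: factor everything through the common open subset $U$ and prove $\Def Y_0 \simeq \Def U \simeq \Def X_0$, using depth in codimension $\geq 3$ to compare the relevant cohomology in degrees $\leq 2$. However, as written there are two concrete errors.

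First, on the $Y_0$ side you repeatedly use $H^1(Y_0,\stsh_{Y_0})$ and $H^2(Y_0,\stsh_{Y_0})$ and call them ``the tangent-obstruction groups for deforming the smooth variety $Y_0$.'' They are not: deformations of the smooth variety $Y_0$ are controlled by the tangent sheaf, with tangent space $H^1(Y_0,\Theta_{Y_0})$ and obstructions in $H^2(Y_0,\Theta_{Y_0})$ (the groups $H^i(\stsh)$ govern deformations of line bundles, not of the variety). The paper's proof runs exactly your induction over small extensions, but with $\Theta_{Y_0}$: since $\Theta_{Y_0}$ is locally free and $\codim_{Y_0}\exc(\phi_0)\geq 3$, Proposition~\ref{loc coho prop} and Corollary~\ref{loc coho cor2} give $H^1(Y_0,\Theta_{Y_0})\simeq H^1(U,\Theta_U)$ and $H^2(Y_0,\Theta_{Y_0})\hookrightarrow H^2(U,\Theta_U)$, which is what makes the torsor/obstruction comparison work. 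Your argument is salvageable by this substitution, since the depth computation is insensitive to which locally free sheaf you use, but the sheaf you named is the wrong one.

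Second, the comparison $\Def X_0 \simeq \Def U$ for the \emph{singular affine} variety $X_0$ cannot be run the same way: the deformation theory of $X_0$ is governed by the cotangent complex ($T^1$, $T^2$), not by coherent cohomology of $\stsh_{X_0}$ (which vanishes in positive degrees on an affine scheme anyway), so ``depth of $\stsh_{X_0}$ along $Z_0$'' does not translate into control of $\Def X_0$ ``essentially formally.'' Moreover your justification of that depth bound is itself wrong: normality (Serre's $R_1+S_2$) only gives depth $\geq 2$ along a codimension $\geq 2$ subset, not $\geq 3$; one needs that $X_0$ is Cohen--Macaulay (it is, being Gorenstein). The paper handles this side by citing Artin's theorem that for an affine Cohen--Macaulay variety of dimension $\geq 3$ the restriction $\Def X_0 \to \Def (X_0)_{\mathrm{sm}}$ is an isomorphism \cite[Proposition 9.2]{Art76}; this is a genuine external input, not a formal consequence of the local cohomology lemmas in Section~\ref{sect prelim}. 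Your alternative of constructing the inverse directly as $\Spec_A\phi_*\stsh_Y$ is a reasonable idea and partially sidesteps this, but verifying that the two constructions are mutually inverse still requires exactly the existence-and-uniqueness statement over $X_0$ that Artin's theorem provides, so the gap is not closed.
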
 

This proposition follows immediately from the following lemma.

\begin{lem}
Let $Z_0$ be a variety and $j : U_0 \hookrightarrow Z_0$ an open subset.
Assume one of the following conditions.
\begin{enumerate}
\item[(1)] $Z_0$ is affine, Cohen-Macaulay, $\dim Z_0 \geq 3$, and $U_0 = (Z_0)_{\mathrm{sm}}$.
\item[(2)] $Z_0$ is smooth and $\codim_{Z_0}(Z_0 \setminus U_0) \geq 3$.
\end{enumerate}
Then, the restriction
\[ \Def Z_0 \to \Def U_0 \]
gives an isomorphism of functors.
\end{lem}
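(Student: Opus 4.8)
The plan is to show that both restriction functors $\Def Z_0 \to \Def U_0$ are bijective on each $A \in \Art$ by a direct ``extend and glue'' argument, treating the two cases in parallel. Fix $A \in \Art$ and write $j : U_0 \hookrightarrow Z_0$ for the open immersion, with closed complement $W_0 := Z_0 \setminus U_0$. In both cases the hypothesis is arranged so that $\depth_{W_0}(\stsh_{Z_0}) \geq 3$: in case (2) this is immediate since $Z_0$ is smooth and $\codim \geq 3$, and in case (1) it follows because $Z_0$ is Cohen-Macaulay of dimension $\geq 3$ and $W_0 = \Sing(Z_0)$ has codimension $\geq 3$ (a Cohen-Macaulay local ring of dimension $\geq 3$ that is not regular still has depth equal to its dimension, and the relevant depth along $W_0$ is bounded below by $\min\{3, \operatorname{codim}\}$; here I would cite Proposition \ref{loc coho prop}). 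The key consequence, via Corollary \ref{loc coho cor2} applied to $\stsh_{Z_0}$ (and more generally to the structure sheaves of the infinitesimal thickenings, which are also supported with the same depth bound since $A$ is a finite-length $\CC$-algebra and a deformation is flat), is that
\[ H^i(Z_0, \stsh_{Z_0}) \xrightarrow{\sim} H^i(U_0, \stsh_{U_0}) \quad (i \le 1), \]
and similarly for the thickenings. This is the technical engine.

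For \textbf{essential surjectivity}, start with a deformation $U \to \Spec A$ of $U_0$. I would build the deformation of $Z_0$ by taking global sections: in case (1), set $Z := \Spec \Gamma(U, \stsh_U)$, using that $Z_0$ is affine so $Z_0 = \Spec \Gamma(U_0, \stsh_{U_0})$ by normality/$S_2$; one must check $\Gamma(U,\stsh_U)$ is flat over $A$ (induct up the filtration of $\mathfrak m_A$ using the depth $\geq 2$ vanishing, which makes $H^1$ of the ideal sheaves vanish, so each extension $0 \to \stsh_{U_0}\otimes(I/I') \to \stsh_{U_n} \to \stsh_{U_{n-1}} \to 0$ stays exact on global sections) and that $Z \otimes_A \CC \simeq Z_0$. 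In case (2), one glues $U$ with the formal/infinitesimal data near $W_0$: since $Z_0$ is smooth, $W_0$ has a neighborhood-free obstruction theory, and in fact the cleanest route is to note that a deformation of the smooth $U_0$ together with the triviality of deformations "at codimension $\geq 3$" lets one extend $U$ across $W_0$ uniquely — this is where I'd invoke that $T^1$ and $T^2$ of the pair localize and that $H^1(U_0, \mathcal T_{Z_0}|_{U_0}) = H^1(Z_0,\mathcal T_{Z_0})$ and the $H^2$-injection, both from Corollary \ref{loc coho cor2} applied to the (locally free, since $Z_0$ smooth) tangent sheaf.

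For \textbf{full faithfulness} (i.e. injectivity on isomorphism classes and on automorphisms), given two deformations $Z, Z'$ over $A$ with an isomorphism $\psi_U : Z|_U \xrightarrow{\sim} Z'|_U$ over $A$ restricting to $\id_{U_0}$, I would extend $\psi_U$ to an isomorphism $Z \xrightarrow{\sim} Z'$: in case (1) apply $\Gamma$ and use $Z = \Spec\Gamma(U,\stsh_U)$, $Z' = \Spec\Gamma(U,\stsh_{U'})$; in case (2) use that an isomorphism of flat deformations extending the identity is a section of a torsor under $H^0(U_0, \mathcal T_{Z_0}|_{U_0})$-valued data, and $H^0(Z_0,\mathcal T_{Z_0}) = H^0(U_0,\mathcal T_{Z_0}|_{U_0})$ again by the depth $\geq 2$ statement (only depth $\geq 2$ is needed for the $H^0$-isomorphism and $H^1$-injectivity governing uniqueness; depth $\geq 3$ gives the extra $H^1$-isomorphism needed for surjectivity of the obstruction comparison). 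Injectivity of the map on deformation classes follows by combining existence of $\psi_U$ (from the hypothesis that the two restrictions agree in $\Def U_0$) with this extension.

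The main obstacle I expect is \textbf{case (2)} — extending a deformation across the codimension-$\geq 3$ locus $W_0$ when $Z_0$ is merely smooth rather than affine. Unlike case (1), one cannot simply take global sections to recover $Z$; one must glue the given $U$ with a (unique) local model over a neighborhood of $W_0$ and check the gluing is consistent. The right framework is to compare the cotangent-complex-based deformation and obstruction spaces $\mathrm{Ext}^i(\mathbb L_{Z_0}, \stsh_{Z_0})$ with their restrictions to $U_0$; since $Z_0$ is smooth, $\mathbb L_{Z_0} = \Omega_{Z_0}$ is locally free, so these are just $H^i(Z_0,\mathcal T_{Z_0})$ and $H^i(U_0,\mathcal T_{Z_0}|_{U_0})$, and Corollary \ref{loc coho cor2} gives an iso for $i=0,1$ and an injection for $i=2$ — exactly enough to run the inductive lifting argument up the Artinian filtration and conclude the restriction functor is an isomorphism. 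Making this inductive step precise (matching obstruction classes and showing the chosen lifts glue) is the real content; everything else is formal manipulation of the depth vanishing.
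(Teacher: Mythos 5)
Your plan for case (2) is essentially the paper's own proof: one inducts over small extensions of Artinian rings, and the whole argument is driven by the fact that $\depth_{Z_0\setminus U_0}\Theta_{Z_0}\geq 3$ (the tangent sheaf is locally free and the complement has codimension $\geq 3$), which via Lemma \ref{loc coho lem} and Proposition \ref{loc coho prop} gives $H^1(Z_0,\Theta_{Z_0})\simeq H^1(U_0,\Theta_{U_0})$ and an injection on $H^2$; the injection kills the obstruction to lifting across $W_0$, and the compatible simply transitive $H^1$-actions identify the sets of lifts, giving both existence and uniqueness. For case (1) the paper simply cites \cite[Proposition 9.2]{Art76}, whereas you unfold what is essentially Artin's global-sections argument ($Z:=\Spec\Gamma(U,\stsh_U)$ plus the depth-driven exactness of global sections up the Artinian filtration); that is a legitimate alternative route to the citation.

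One point in your write-up is genuinely wrong as stated, however: in case (1) you justify $\depth_{W_0}(\stsh_{Z_0})\geq 3$ by asserting that $W_0=\Sing(Z_0)$ has codimension $\geq 3$, with the fallback claim that the depth along $W_0$ is bounded below by $\min\{3,\codim\}$. Neither follows from the stated hypotheses: for a Cohen--Macaulay variety the depth of $\stsh_{Z_0}$ along $W_0$ equals $\codim_{Z_0}W_0$ (the infimum in the definition is attained at a generic point of $W_0$, where depth equals the local dimension), and case (1) assumes only $\dim Z_0\geq 3$, not $\codim_{Z_0}\Sing(Z_0)\geq 3$. If the singular locus has codimension $2$, your engine ($H^1(U_0,\stsh_{U_0})=0$, hence surjectivity of $\Gamma(U,\stsh_U)\to\Gamma(U',\stsh_{U'})$ at each small extension) breaks down. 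This gap is arguably inherited from the lemma's own phrasing --- the condition Artin's proposition actually needs is $\depth_{Z_0\setminus U_0}\stsh_{Z_0}\geq 3$, and in the paper's application ($Z_0=X_0$ admitting a small crepant resolution with exceptional locus of codimension $\geq 3$) that condition does hold --- but your proof should record it as the hypothesis it genuinely uses rather than derive it from Cohen--Macaulayness plus $\dim\geq 3$.
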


\begin{proof}
The case (1) is proved in \cite[Proposition 9.2]{Art76}.
Let us prove the lemma under the condition (2).
Let $A$ be a local Artinian algebra with residue field $\CC$ and $U$ a deformation of $U_0$ over $A$.
We will show that $\Def(Z_0)(A) = \Def(U_0)(A)$ by an induction on the dimension of $A$ as a $\CC$-vector space.
Let
\[ e : 0 \to (t) \to A \to A' \to 0 \]
be a small extension and $U' := U \otimes_A A'$.
By induction hypothesis, there is the unique deformation $Z'$ of $Z_0$ over $A'$ such that $Z'|_{U_0} = U'$.

By assumption, Lemma \ref{loc coho lem} and Proposition \ref{loc coho prop}, we have 
an isomorphism
\[ H^1(Z_0, \Theta_{Z_0}) \xrightarrow{\sim} H^1(U, \Theta_{U_0}) \]
and an injective map
\[ H^2(Z_0, \Theta_{Z_0}) \hookrightarrow H^2(U, \Theta_{U_0}). \]
Note that the second map is compatible with the obstruction map by construction.
Since there is a lifting $U$ of $U'$ to $A$, the obstruction map sends $e$ to $0$, and hence the set
\[ \{ \text{isom class of lifts of $Z'$ to $A$} \} \]
is non-empty.
By deformation theory, the first cohomology group $H^1(Z_0, \Theta_{Z_0}) = H^1(U, \Theta_{U_0})$ acts on the sets
\[ \{ \text{isom class of lifts of $Z'$ to $A$} \} ~ \text{and} ~ \{ \text{isom class of lifts of $U'$ to $A$} \} \]
transitively, and the restriction map
\[  \{ \text{isom class of lifts of $Z'$ to $A$} \} \to \{ \text{isom class of lifts of $U'$ to $A$} \}  \]
is compatible with these actions. Thus, the above restriction map is bijective and hence there is a unique lift of $Z$ of $Z'$ to $A$, which satisfies $Z|_{U_0} = U$.

Let $Z^1$ and $Z^2$ be two deformation of $Z_0$ over $A$ such that $Z^i|_{U_0} = U$ $(i=1,2)$.
If we set $Z'^i := Z^i \otimes_A A'$, then $Z'^i$ is the extension of $U'$ for $i = 1, 2$ and hence we have $Z'^1 = Z'^2$.
Thus, the above argument shows that we have $Z^1 = Z^2$.
This shows the result.
\end{proof}

Let $A$ be a local Artinian algebra with residue field $\CC$.
Take an element
\[ \xi \in (\Def Y_0)(A) = (\Def X_0)(A), \]
and let $Y$ and $X$ be deformations of $Y_0$ and $X_0$, respectively, over $A$ that correspond to $\xi$.

It is easy to observe the following.
\begin{enumerate}
\item[(1)] $X$ and $Y$ are of finite type over $A$ (or $\CC$).
\item[(2)] The inclusions $X_0 \subset X$ and $Y_0 \subset Y$ are homeomorphisms.
\end{enumerate}
For example, (1) is proved in \cite{Art76}.

\begin{lem}
$X$ and $Y$ are Cohen-Macaulay schemes.
Furthermore, if the local Artinian algebra $A$ is Gorenstein, then so are $X$ and $Y$.
\end{lem}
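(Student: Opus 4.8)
The plan is to prove Cohen-Macaulayness by reducing to the fibre $X_0$ (resp. $Y_0$) via the flatness of the deformation over $A$, together with the standard fact that a scheme flat over an Artinian base is Cohen-Macaulay if and only if its special fibre is. More precisely, for a point $x \in X$, with image $x_0 \in X_0$ under the homeomorphism $X_0 \hookrightarrow X$, the local ring $\stsh_{X,x}$ is flat over $A$ with closed fibre $\stsh_{X,x} \otimes_A \CC \simeq \stsh_{X_0, x_0}$. Since $A$ is Artinian local, $\stsh_{X,x}$ is a module over itself of finite length as an $A$-module fibrewise, and the depth sensitivity of flat local homomorphisms gives $\depth \stsh_{X,x} = \depth A + \depth \stsh_{X_0,x_0} = \depth \stsh_{X_0,x_0}$ (as $\depth A = 0$), while $\dim \stsh_{X,x} = \dim A + \dim \stsh_{X_0,x_0} = \dim \stsh_{X_0,x_0}$. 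Hence $\stsh_{X,x}$ is Cohen-Macaulay precisely when $\stsh_{X_0,x_0}$ is, and $X_0$ is Cohen-Macaulay by hypothesis (it is a normal Gorenstein variety). The same argument applies verbatim to $Y$: here $Y_0$ is smooth, hence Cohen-Macaulay, so $Y$ is Cohen-Macaulay.

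First I would record the key homological input: if $(B,\mathfrak{n}) \to (C,\mathfrak{m})$ is a flat local homomorphism of Noetherian local rings with closed fibre $\bar{C} = C/\mathfrak{n}C$, then $\depth C = \depth B + \depth \bar C$ and $\dim C = \dim B + \dim \bar C$; consequently $C$ is Cohen-Macaulay iff both $B$ and $\bar C$ are, and $C$ is Gorenstein iff both $B$ and $\bar C$ are. These are standard (see e.g. Matsumura or Bruns--Herzog). Then I would apply this with $B = A$, which is Artinian local, hence $\dim A = \depth A = 0$, and is Gorenstein exactly under the extra hypothesis of the lemma. Taking $C = \stsh_{X,x}$ and $\bar C = \stsh_{X_0, x_0}$ yields the Cohen-Macaulay claim immediately from the Cohen-Macaulayness of $X_0$, and the Gorenstein claim from the Gorenstein property of $X_0$ (a normal Gorenstein variety is Gorenstein at every point) together with the assumption that $A$ is Gorenstein.

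For $Y$ the situation is even easier on the fibre side: $Y_0$ is smooth over $\CC$, so every local ring $\stsh_{Y_0,y_0}$ is regular, in particular Cohen-Macaulay and Gorenstein. Applying the same flat base change criterion to $C = \stsh_{Y,y}$ over $B = A$ shows $Y$ is Cohen-Macaulay always, and Gorenstein when $A$ is. One small point to address is that the deformation $Y \to \Spec A$ is flat by definition of a deformation, and that the homeomorphisms $X_0 \hookrightarrow X$ and $Y_0 \hookrightarrow Y$ (already noted in the text) let us identify the underlying point sets, so that checking the Cohen-Macaulay/Gorenstein condition at all points of $X$ (resp. $Y$) reduces to checking it at all points of $X_0$ (resp. $Y_0$).

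I do not anticipate a genuine obstacle here — the statement is a direct consequence of standard flat-base-change behaviour of depth and dimension over an Artinian base. The only thing that requires minor care is making sure the fibre ring is correctly identified, i.e. that $\stsh_{X,x} \otimes_A \CC \simeq \stsh_{X_0,x_0}$, which is exactly the defining property $X \otimes_A \CC \simeq X_0$ of the deformation localized at $x$; and, for the Gorenstein part, remembering that "Gorenstein'' for the scheme $X_0$ means each local ring is Gorenstein, which combined with $A$ Gorenstein gives $\stsh_{X,x}$ Gorenstein by the transitivity of the Gorenstein property along flat local maps. Thus the proof is short and essentially formal once the flat-local-homomorphism facts are invoked.
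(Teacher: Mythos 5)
Your proof is correct and follows essentially the same route as the paper: both reduce to the standard fact that for a flat local homomorphism from the Artinian local ring $A$, the total ring is Cohen--Macaulay (resp.\ Gorenstein) if and only if $A$ and the closed fibre are, and then use that $X_0$ is Gorenstein and $Y_0$ is smooth. The paper simply cites the Corollary of Theorem 23.3 and Theorem 23.4 in Matsumura for exactly the facts you spell out.
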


\begin{proof}
Note that Artinian algebras are zero-dimensional and hence are Cohen-Macaulay.
In addition, for any point $x \in X$ (resp. $y \in Y$), the homomorphism $A \to \stsh_{X,x}$ (resp. $A \to \stsh_{Y, y}$) is automatically local.
Then, the statement follows from \cite[Corollary of Theorem 23.3 and Theorem 23.4]{Mat}.
\end{proof}

\begin{lem}
Let us consider the map $\phi_0 : Y \to X$ of topological spaces.
Then the direct image sheaf $(\phi_0)_*\stsh_Y$ is isomorphic to $\stsh_X$ as sheaves of $A$-algebras.
In particular, there is a morphism of $A$-schemes $\phi : Y \to X$ whose pull back by $\Spec \CC \to \Spec A$ is $\phi_0$.
\end{lem}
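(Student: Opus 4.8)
The plan is to reduce everything to the open subscheme on which the two deformations are literally identified, and then to run the local‑cohomology machinery of Section \ref{sect prelim}. Set $U_0 := Y_0\setminus\exc(\phi_0)$, identified via $\phi_0$ with $(X_0)_{\mathrm{sm}}$. The isomorphism $\Def Y_0\simeq\Def X_0$ of Proposition \ref{prop 3-1} is by construction the composite of the restriction isomorphisms $\Def Y_0\simeq\Def U_0\simeq\Def X_0$; hence the deformations $Y$ and $X$ attached to $\xi$ restrict to the \emph{same} deformation of $U_0$ over $A$. Concretely, there is an isomorphism of $A$‑schemes $\alpha\colon\mathcal U_Y\xrightarrow{\sim}\mathcal U_X$ between the open subschemes of $Y$ and $X$ lying over $U_0$, whose reduction modulo $\mathfrak m_A$ is $\phi_0|_{U_0}$. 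Since $Y_0\subset Y$ and $X_0\subset X$ are homeomorphisms, $\mathcal U_Y,\mathcal U_X$ are exactly the opens supported on $U_0$, and, viewing $\phi_0$ as a continuous map $Y\to X$, one has $\phi_0^{-1}(\mathcal U_X)=\mathcal U_Y$ with $\phi_0|_{\mathcal U_Y}$ the homeomorphism underlying $\alpha$.

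First I would show that $\stsh_X$ and $\stsh_Y$ are recovered from their restrictions to $\mathcal U_X$, $\mathcal U_Y$ via the open immersions $j\colon\mathcal U_X\hookrightarrow X$ and $k\colon\mathcal U_Y\hookrightarrow Y$. Indeed $X$ and $Y$ are Cohen--Macaulay by the preceding lemma, and $\codim_X(X\setminus\mathcal U_X)=\codim_Y(Y\setminus\mathcal U_Y)=\codim_{Y_0}\exc(\phi_0)\geq 3$ (codimension is a topological invariant, hence unchanged under the deformation). For a Cohen--Macaulay scheme this forces $\depth_{X\setminus\mathcal U_X}(\stsh_X)\geq 2$ and $\depth_{Y\setminus\mathcal U_Y}(\stsh_Y)\geq 2$, so by Proposition \ref{loc coho prop} the sheaves $\mathcal H^0$ and $\mathcal H^1$ with supports in $X\setminus\mathcal U_X$ (resp. $Y\setminus\mathcal U_Y$) vanish, and the four‑term exact sequence of Lemma \ref{loc coho lem} gives isomorphisms $\stsh_X\xrightarrow{\sim}j_*(\stsh_X|_{\mathcal U_X})$ and $\stsh_Y\xrightarrow{\sim}k_*(\stsh_Y|_{\mathcal U_Y})$ of sheaves of $A$‑algebras.

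Applying $(\phi_0)_*$ to the second isomorphism, using $\phi_0\circ k=j\circ(\phi_0|_{\mathcal U_Y})$, and then transporting along $\alpha$ (which identifies $(\phi_0|_{\mathcal U_Y})_*(\stsh_Y|_{\mathcal U_Y})$ with $\stsh_X|_{\mathcal U_X}$), I obtain
\[ (\phi_0)_*\stsh_Y \;\simeq\; (\phi_0)_*k_*(\stsh_Y|_{\mathcal U_Y}) \;\simeq\; j_*\bigl((\phi_0|_{\mathcal U_Y})_*(\stsh_Y|_{\mathcal U_Y})\bigr) \;\simeq\; j_*(\stsh_X|_{\mathcal U_X}) \;\simeq\; \stsh_X, \]
all as sheaves of $A$‑algebras, which is the first assertion. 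Taking global sections (recall $X=\Spec R$) yields an $A$‑algebra isomorphism $R=\Gamma(X,\stsh_X)\xrightarrow{\sim}\Gamma(Y,\stsh_Y)$, and since $X$ is affine this is the same as a morphism of $A$‑schemes $\phi\colon Y\to X$. To finish, I would check that everything above is compatible with reduction modulo $\mathfrak m_A$: over $\mathcal U$ it reduces to $\phi_0|_{U_0}$, and $R\otimes_A\CC=R_0=\Gamma(Y_0,\stsh_{Y_0})$ by normality of $X_0$, so $\phi\otimes_A\CC$ is the morphism $Y_0\to X_0$ associated to the canonical isomorphism $\stsh_{X_0}\simeq(\phi_0)_*\stsh_{Y_0}$, namely $\phi_0$ itself. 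In particular $\phi$ has underlying continuous map $\phi_0$ and is a genuine morphism of locally ringed spaces, since a ring homomorphism between local rings that is local modulo a nilpotent ideal is local.

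The main obstacle is the bookkeeping around the common open $\mathcal U$: one must extract from the \emph{definition} of the functor isomorphism $\Def Y_0\simeq\Def X_0$ that $Y$ and $X$ genuinely share $\mathcal U$ with $\phi_0|_{\mathcal U}$ an isomorphism of $A$‑schemes — not merely that their special fibres agree there — and then verify that the resulting sheaf isomorphism $(\phi_0)_*\stsh_Y\simeq\stsh_X$ descends to an honest morphism of $A$‑schemes reducing to $\phi_0$, which is exactly where the affineness of $X$ and the nilpotence of $\mathfrak m_A$ are used.
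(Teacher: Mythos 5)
Your proposal is correct and follows essentially the same route as the paper: identify the common open subscheme $U$ of the two deformations, use Cohen--Macaulayness of $X$ and $Y$ together with the codimension bound on the complement to get $\stsh_Y\simeq i_*\stsh_U$ and $\stsh_X\simeq j_*\stsh_U$, and push forward along $\phi_0$. The paper's proof is just a terser version of this, leaving implicit the depth computation and the compatibility with reduction modulo $\mathfrak m_A$ that you spell out.
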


\begin{proof}
Let $j : U \to X$ and $i : U \to Y$ be open immersions.
Since $X$ and $Y$ are Cohen-Macaulay, we have isomorphisms of sheaves of rings
\[ (\phi_0)_*\stsh_Y \simeq (\phi_o)_*i_*\stsh_U \simeq j_*\stsh_U \simeq \stsh_X. \]
It is clear that all isomorphisms are $A$-linear.
\end{proof}

\begin{lem} \label{inf defo proj}
The morphism $\phi : Y \to X$ is projective.
\end{lem}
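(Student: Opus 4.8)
The plan is to deduce projectivity of $\phi$ from the projectivity of $\phi_0 : Y_0 \to X_0$, using that $X_0 \subset X$ and $Y_0 \subset Y$ are homeomorphisms and that $A$ is Artinian (so $X$ and $Y$ are, topologically, just thickenings of $X_0$ and $Y_0$). The key point is to produce a $\phi$-ample line bundle on $Y$. Since $\phi_0$ is projective, there is a $\phi_0$-ample line bundle $L_0$ on $Y_0$. First I would lift $L_0$ to a line bundle $L$ on $Y$: because the inclusion $Y_0 \hookrightarrow Y$ is a nilpotent thickening (its ideal is nilpotent, as $\mathfrak{m}_A$ is nilpotent), the obstruction to lifting $L_0$ step by step along the filtration of $A$ by powers of $\mathfrak{m}_A$ lies in $H^2(Y_0, \stsh_{Y_0})$-type groups, but in fact one does not even need vanishing: a line bundle on a scheme and on any nilpotent thickening with the same underlying topological space correspond under the exponential/Kummer sequence, and the relevant obstruction and ambiguity groups involve coherent cohomology of $Y_0$ twisted by the (nilpotent) ideal sheaf; more cleanly, I would simply invoke that $\Pic(Y) \to \Pic(Y_0)$ is surjective for an infinitesimal thickening when $H^2(Y_0, \stsh_{Y_0})$ controls obstructions, or bypass this entirely by the following argument.

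A cleaner route avoids lifting $L_0$ altogether. Since $\phi : Y \to X$ is a morphism of schemes of finite type over $A$ (hence over $\CC$) and $X$ is affine, it suffices to show $\phi$ is proper and quasi-projective; properness is what needs real work and quasi-projectivity then follows from a $\phi$-ample bundle. For properness: $\phi$ is of finite type, and it is proper if and only if it is universally closed and separated. Separatedness is inherited from $\phi_0$ because $Y_0 \hookrightarrow Y$ and $X_0 \hookrightarrow X$ are homeomorphisms and the diagonal $Y \to Y \times_X Y$ has the same underlying topological map as the closed immersion $Y_0 \to Y_0 \times_{X_0} Y_0$; being a closed immersion is insensitive to nilpotents in the sense that a morphism is a closed immersion iff its reduction is and it is a monomorphism of finite type — more carefully, $Y \to Y\times_X Y$ is a locally closed immersion whose image is closed because that holds after reduction. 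For universal closedness: a morphism of Noetherian schemes is universally closed iff its base change along $\mathrm{Spec}$ of the reduction is, since universal closedness is a topological condition and $Y \to X$ and $Y_0 \to X_0$ are the same continuous map after any base change (Artinian thickenings do not change the underlying space). Hence $\phi$ is proper because $\phi_0$ is.

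Having established properness, I would then produce the $\phi$-ampleness: lift $L_0$ to a line bundle $L$ on $Y$ (using that $\Pic(Y) \to \Pic(Y_0)$ is surjective, which holds because the kernel and obstruction for each step of the nilpotent filtration lie in $H^1$ and $H^2$ of a coherent sheaf on $Y_0$, and in any case a compatible system of lifts exists by the same homeomorphism argument: a line bundle is $\phi$-ample iff its restriction to the central fiber is, by \cite[\S{}III, EGA results on ampleness and nilpotent thickenings]{} — concretely, ampleness relative to a proper morphism is detected on the fiber over any point when the base is local Artinian, since $\phi_* L^{\otimes n}$ generates $L^{\otimes n}$ for $n \gg 0$ can be checked modulo $\mathfrak{m}_A$ by Nakayama). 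Then $L$ is $\phi$-ample, so $\phi$ is projective. The main obstacle here is the bookkeeping in reducing ampleness and properness modulo the nilpotent ideal of $A$ and invoking Nakayama's lemma correctly for the coherent sheaves $\phi_* L^{\otimes n}$; once one accepts that all the topological and coherent-sheaf-theoretic conditions defining projectivity descend to (and ascend from) the reduction because $A$ is Artinian, the statement follows.
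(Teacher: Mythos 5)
Your overall strategy is the same as the paper's: lift the $\phi_0$-ample line bundle $L_0$ to $Y$ and then check relative ampleness of the lift by restricting to the closed fibre (the paper cites \cite[Theorem 1.2.17 and Remark 1.2.18]{Laz} for this last step, which is essentially your Nakayama argument for $\phi_*L^{\otimes n}$). The extra care you take with properness is fine, if more than the paper bothers with. But there is a genuine gap in the lifting step. You assert that $\Pic(Y)\to\Pic(Y_0)$ is surjective ``because the kernel and obstruction for each step of the nilpotent filtration lie in $H^1$ and $H^2$ of a coherent sheaf on $Y_0$,'' and at one point you even claim that no vanishing is needed and that ``a compatible system of lifts exists by the same homeomorphism argument.'' That is not correct: the obstruction to lifting a line bundle across a square-zero extension with ideal $J$ is a class in $H^2(Y_0,\stsh_{Y_0}\otimes J)$, and it genuinely obstructs in general --- a homeomorphism of underlying spaces says nothing here, and line bundles on the central fibre of an infinitesimal deformation need not lift. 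You never verify that this obstruction group vanishes.

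The gap is fillable, and the paper's proof shows how: since $\phi_0$ is a crepant (hence rational) resolution of the affine variety $X_0$, one has $\Ext^p_{Y_0}(L_0,L_0)\simeq H^p(Y_0,\stsh_{Y_0})\simeq \Gamma\bigl(X_0, R^p(\phi_0)_*\stsh_{Y_0}\bigr)=0$ for $p>0$; in other words $L_0$ is a partial tilting bundle, and then Proposition \ref{Kar1} (Karmazyn) produces a unique lift $L$ on $Y$, automatically invertible. So you should either insert the computation $H^2(Y_0,\stsh_{Y_0})=0$ (which uses affineness of $X_0$ and rationality of its singularities --- hypotheses you never invoke) into your $\Pic$-surjectivity argument, or route through the partial-tilting lifting result as the paper does. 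Without one of these, the central object $L$ of your proof has not been constructed.
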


\begin{proof}
First, we note that the morphism $\phi$ is proper.
Thus, it is enough to show that there is a $\phi$-ample line bundle.

Let $L_0$ be a $\phi_0$-ample line bundle.
Since
\[ \Ext_{Y_0}^p(L_0, L_0) \simeq H^p(Y_0, \stsh_{Y_0}) \simeq R^p{\phi_0}_*(\stsh_{Y_0}) \]
and $\phi_0$ is a rational resolution of $X_0$, the line bundle $L_0$ is a partial tilting bundle on $Y_0$.
Thus due to Proposition \ref{Kar1} below,  $L_0$ has the unique lifting $L$ on $Y$, which is invertible.

We show that this line bundle $L$ on $Y$ is $\phi$-ample.
Since $X_0 \hookrightarrow X$ is a homeomorphism, we can regard a closed point $x \in X $ as a closed point of $X_0$,
and we have $\phi^{-1}(x) = \phi_0^{-1}(x)$.
Thus, we have $L|_{\phi^{-1}(x)} \simeq L_0|_{\phi_0^{-1}(x)}$ and hence $L|_{\phi^{-1}(x)}$ is absolutely ample.
Then, by \cite[Theorem 1.2.17 and Remark 1.2.18]{Laz}, we have that $L$ is $\phi$-ample.
\end{proof}

Next we discuss tilting bundles on $Y$.
First we recall the following result due to Karmazyn.

\begin{prop}[{\cite[Theorem 3.4]{Kar15}}] \label{Kar1}
Let $\pi_0 : Z_0 \to \Spec S_0$ be a projective morphisms of Noetherian schemes.
Let $(A, \mathfrak{m})$ be a local Artinian algebra with residue field $\CC$,
and let $Z$ and $\Spec S$ be deformations of $Z_0$ and $\Spec S_0$, respectively, over $A$.
Assume that there is an $A$-morphism $\pi : Z \to \Spec S$ such that $\pi \otimes_A A/\mathfrak{m} = \pi_0$.

Then, for any partial tilting bundle $T_0$ on $Z_0$, there is the unique lifting $T$ of $T_0$ on $Z$, which is partial tilting.
Moreover, if $T_0$ is tilting, then so is $T$.
\end{prop}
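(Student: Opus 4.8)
The plan is to induct on $\dim_{\CC} A$, the case $A = \CC$ being trivial, peeling off one small extension $0 \to (t) \to A \to A' \to 0$ with $(t) \simeq \CC$ and $\mathfrak{m} t = 0$ at a time. Write $Z' := Z \times_{\Spec A} \Spec A'$, with the induced morphism $\pi'$ over $\Spec(S \otimes_A A')$, and let $j : Z_0 \hookrightarrow Z$ and $j_{A'} : Z' \hookrightarrow Z$ be the canonical closed immersions. By the inductive hypothesis $T_0$ lifts uniquely to a partial tilting bundle $T'$ on $Z'$, which is tilting if $T_0$ is; so it suffices to lift $T'$ one step further to $Z$ and to check that the result is again partial tilting (resp.\ tilting).

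\textbf{Existence, uniqueness, local freeness of the lift.} Here I would invoke the standard deformation theory of coherent sheaves along a small extension (\cite{Har2}, \cite{Ser}): the obstruction to finding a coherent $\stsh_Z$-module $T$, flat over $A$, with $j_{A'}^* T \simeq T'$ lies in $\Ext^2_{Z_0}(T_0, T_0) \otimes_{\CC} (t)$, and when this vanishes the set of isomorphism classes of such lifts is a torsor under $\Ext^1_{Z_0}(T_0, T_0) \otimes_{\CC} (t)$. Since $T_0$ is partial tilting both groups are zero, so $T$ exists and is unique; and, again using uniqueness over $A'$, $T$ is in fact the unique lift of $T_0$ to $Z$. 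Since $T$ is coherent, flat over $A$, with $j^* T = T_0$ locally free, the local criterion of flatness together with Nakayama's lemma forces $T$ to be a vector bundle on $Z$.

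\textbf{Tilting properties.} As $T$ is locally free, $\Ext^p_Z(T, T) = H^p(Z, T^{\vee} \otimes_{\stsh_Z} T)$. Flatness of $T$ over $A$ together with $\mathfrak{m} t = 0$ yields on $Z$ a short exact sequence
\[ 0 \to j_*\bigl( (T_0^{\vee} \otimes T_0) \otimes_{\CC} (t) \bigr) \to T^{\vee} \otimes T \to (j_{A'})_*\bigl( T'^{\vee} \otimes T' \bigr) \to 0, \]
and feeding the vanishings $H^p(Z_0, T_0^{\vee} \otimes T_0) = 0$ ($p>0$) and $H^p(Z', T'^{\vee} \otimes T') = 0$ ($p>0$, by the inductive hypothesis) into its long exact cohomology sequence gives $\Ext^p_Z(T, T) = 0$ for all $p > 0$. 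For the generator property, when $T_0$ is tilting, I would argue directly rather than inductively: let $F \in \mathrm{D}^-(\Qcoh(Z))$ with $\RHom_Z(T, F) = 0$. Since $T$ is a vector bundle and $Z \to \Spec A$ is flat, a base-change computation --- resolve $\CC$ by a bounded-above complex of free $A$-modules and use that cohomology on the Noetherian scheme $Z$ commutes with arbitrary direct sums --- gives $\RHom_{Z_0}(T_0, \mathbf{L}j^* F) \simeq \RHom_Z(T, F) \otimes^{L}_A \CC = 0$. As $\mathbf{L}j^* F \in \mathrm{D}^-(\Qcoh(Z_0))$ and $T_0$ generates $\mathrm{D}^-(\Qcoh(Z_0))$, we get $\mathbf{L}j^* F \simeq F \otimes^{L}_A \CC = 0$; but if $F \neq 0$, then for the top nonvanishing cohomology $H^n(F)$ one has $H^n(F \otimes^{L}_A \CC) \simeq H^n(F)/\mathfrak{m} H^n(F)$, which is nonzero by Nakayama's lemma over the Artinian local ring $A$ --- a contradiction. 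Hence $F = 0$ and $T$ is a tilting bundle on $Z$.

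\textbf{Main obstacle.} I expect the delicate point to be the base-change isomorphism $\RHom_{Z_0}(T_0, \mathbf{L}j^* F) \simeq \RHom_Z(T, F) \otimes^{L}_A \CC$ in the last step: because $\CC$ has infinite projective dimension over $A$, the complex $\mathbf{L}j^* F$ is unbounded below, so one must work in the full unbounded derived category of quasi-coherent sheaves and use both the perfectness of $T$ and the fact that $Z$ (being projective over the affine scheme $\Spec S$) is Noetherian and quasi-compact. By contrast, the deformation-theoretic inputs of the first two steps are entirely standard, and the cohomology long exact sequence argument for the partial tilting property is routine.
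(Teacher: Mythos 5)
The paper does not prove this proposition --- it is quoted directly from Karmazyn \cite[Theorem 3.4]{Kar15} --- so there is no in-text argument to compare against. Your proof is correct and follows the standard (and, as far as I can tell, essentially Karmazyn's) route: induction over small extensions, with existence and uniqueness of the lift coming from the vanishing of $\Ext^1_{Z_0}(T_0,T_0)$ and $\Ext^2_{Z_0}(T_0,T_0)$, the partial-tilting property from the long exact cohomology sequence of $0 \to T^{\vee}\otimes T \otimes_A (t) \to T^{\vee}\otimes T \to (j_{A'})_*(T'^{\vee}\otimes T') \to 0$, and the generator property from derived base change along $j$ together with Nakayama, which applies to the possibly non-coherent $H^n(F)$ because $\mathfrak{m}$ is nilpotent.
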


The following lemma is a certain variation of the proposition above.

\begin{prop} \label{prop 3-key1}
With the same condition as in Proposition \ref{Kar1}, assume in addition that $Z_0$ is a Cohen-Macaulay variety of dimension greater than or equal to three.
Let $T_0$ be a partial tilting bundle on $Z_0$, and let $U_0 \subset Z_0$ be an open subscheme of $Z_0$.
Put $U := Z|_{U_0}$ and assume that
\[ \codim_{Z_0}(Z_0 \setminus U_0) \geq 3. \]
Then, the bundle $T_0|_{U_0}$ on $U_0$ lifts uniquely to a bundle on $U$.
\end{prop}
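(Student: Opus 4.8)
The plan is to obtain existence of the lift immediately from Proposition \ref{Kar1} and to establish uniqueness by an induction on $\dim_{\CC}A$, exploiting that $T_0|_{U_0}$ is infinitesimally rigid because $\EEnd(T_0)=T_0^{\vee}\otimes T_0$ has depth $\geq 3$ along $Z_0\setminus U_0$. For existence we may assume $U_0\subsetneq Z_0$, since otherwise the statement is exactly Proposition \ref{Kar1}. By that proposition there is a (partial tilting) lift $T$ of $T_0$ to $Z$, and then $T|_U$ is a bundle on $U=Z|_{U_0}$ that is flat over $A$ and satisfies $(T|_U)\otimes_A\CC\simeq T_0|_{U_0}$; hence $T|_U$ is a lift of $T_0|_{U_0}$, and only uniqueness remains.

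Next I would record the cohomological input. The sheaf $\EEnd(T_0)$ is locally free, so its $(Z_0\setminus U_0)$-depth coincides with that of $\stsh_{Z_0}$, which is $\geq\codim_{Z_0}(Z_0\setminus U_0)\geq 3$ because $Z_0$ is Cohen--Macaulay. Applying Corollary \ref{loc coho cor2} with $n=3$, the restriction map
\[ H^i(Z_0,\EEnd(T_0))\longrightarrow H^i(U_0,\EEnd(T_0)|_{U_0}) \]
is an isomorphism for $i\leq 1$. Since $T_0$ is partial tilting, $H^i(Z_0,\EEnd(T_0))=\Ext^i_{Z_0}(T_0,T_0)=0$ for $i>0$, and therefore
\[ \Ext^1_{U_0}(T_0|_{U_0},T_0|_{U_0})=H^1(U_0,\EEnd(T_0)|_{U_0})=0. \]

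Now I would prove uniqueness by induction on $\dim_{\CC}A$, the case $A=\CC$ being trivial. For the inductive step choose a small extension $0\to(t)\to A\to A'\to 0$ and set $Z':=Z\times_A A'$, $U':=Z'|_{U_0}=U\times_A A'$ and $T':=T\times_A A'$; by Proposition \ref{Kar1} over $A'$, $T'$ is the unique lift of $T_0$ to $Z'$. Let $\mathcal{F}$ be an arbitrary lift of $T_0|_{U_0}$ to $U$ (automatically a bundle, being flat over the Artinian ring $A$ with locally free reduction). Its reduction $\mathcal{F}':=\mathcal{F}\otimes_A A'$ is a lift of $T_0|_{U_0}$ to $U'$, so the induction hypothesis gives $\mathcal{F}'\simeq T'|_{U'}=(T|_U)\otimes_A A'$. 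Thus $\mathcal{F}$ and $T|_U$ are both lifts of $\mathcal{F}'$ along the square-zero extension $A\to A'$; by the deformation theory of coherent sheaves the non-empty set of isomorphism classes of such lifts is a torsor under $\Ext^1_{U_0}(T_0|_{U_0},T_0|_{U_0})\otimes_{\CC}(t)$, which vanishes by the previous paragraph. Hence $\mathcal{F}\simeq T|_U$, and the induction is complete.

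I expect the main obstacle to be conceptual rather than computational: the codimension hypothesis only yields injectivity of $H^2(Z_0,\EEnd(T_0))\to H^2(U_0,\EEnd(T_0)|_{U_0})$, not vanishing of the target, so existence of a lift on $U$ cannot be obtained by an obstruction computation and must instead be produced by restricting the global lift $T$ supplied by Proposition \ref{Kar1}; after that, the genuine content is the interplay of the Cohen--Macaulay plus codimension hypothesis with the $\Ext^{>0}$-vanishing of a partial tilting bundle, which forces $\Ext^1_{U_0}(T_0|_{U_0},T_0|_{U_0})=0$ and makes the induction collapse, and one must be careful that the tangent space governing lifts of a sheaf is computed on the central fibre $U_0$, so it is precisely the vanishing over $U_0$ that is used.
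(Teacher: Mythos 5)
Your proof is correct and follows essentially the same strategy as the paper: existence by restricting the global lift supplied by Proposition \ref{Kar1}, and uniqueness by an inductive torsor argument reduced to the vanishing of $H^1(U_0,\EEnd_{U_0}(T_0|_{U_0}))$, which both you and the paper deduce from the depth $\geq 3$ of $\EEnd(T_0)$ along $Z_0\setminus U_0$ together with $\Ext^{>0}_{Z_0}(T_0,T_0)=0$. The only cosmetic differences are that the paper obtains this vanishing via the Leray spectral sequence for $Rj_*\stsh_{U_0}$ rather than Corollary \ref{loc coho cor2}, and runs the induction along the $\mathfrak{m}$-adic filtration of $A$ rather than over arbitrary small extensions.
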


\begin{proof}
Let $\mathfrak{m}$ be the maximal ideal of $A$ and put
\[ A_n := A/\mathfrak{m}^{n+1}, ~ Z_n := Z \otimes_A A_n, ~ \text{and} ~ U_n := U \otimes_A A_n \]
for $n \geq 0$.
Since $A$ is Artinian, we have $A = A_n$ for sufficiently large $n$.
By Proposition \ref{Kar1}, there is the unique lifting $T_n$ of $T_0$ on $Z_n$, which is partial tilting.
The existence of a unique lifting of $T_0|_{U_0}$ on $U_n$ follows from this result.

We prove the uniqueness by an induction on $n$.
Put $T'_n := T_n|_{U_n}$.
Then the set
\[ \{ \text{isom class of liftings of $T'_n$ on $U_{n+1}$} \} \]
is non-empty and is a torsor under the action of $H^1(U_0, \EEnd_{U_0}(T'_0)) \otimes_{\CC} \mathfrak{m}^{n}/\mathfrak{m}^{n+1}$.

By adjunction, we have an isomorphism
\[ H^1(U_0, \EEnd_{U_0}(T'_0)) \simeq H^1(Z_0, \EEnd_{Z_0}(T_0) \otimes_{Z_0} Rj_*\stsh_{U_0}), \]
where $j : U_0 \to Z_0$ is an open immersion.
Thus, there is a spectral sequence
\[ E_2^{p, q} := H^p(Z_0, \EEnd_{Z_0}(T_0) \otimes_{Z_0} R^qj_*\stsh_{U_0}) \Rightarrow H^{p+q}(U_0, \EEnd_{U_0}(T'_0)). \]
Since $Z_0$ is Cohen-Macaulay and $T_0$ is partial tilting on $Z_0$, we have
\[ E_2^{p,0} = \Ext_{Z_0}^p(T_0, T_0) = 0 \]
for $p > 0$.
By assumption, we have $R^1j_*\stsh_{U_0} = 0$ and hence we have $E_2^{p, 1} = 0$ for all $p$.
In particular, we have $E_2^{p,q} = 0$ for $p, q$ with $p+q = 1$, and hence we have
\[ H^{1}(U_0, \EEnd_{U_0}(T'_0)) = 0. \]
Thus, the lifting of $T'_n$ on $U_{n+1}$ is unique.
\end{proof}

\begin{rem} \rm
Put $E_0 := Z_0 \setminus U_0$ and assume that $\codim_{Z_0}(E_0) = 2$.
In this case, analyzing the proof above, we notice that the vanishing of $H^0(Z_0, \EEnd_{Z_0}(T_0) \otimes R^1j_* \stsh_{U_0}) = 0$ is sufficient.
If $Z_0$ and $E_0$ are smooth, one can easily show that there is an isomorphism
\[ H^0(Z_0, \EEnd_{Z_0}(T_0) \otimes R^1j_* \stsh_{U_0}) \simeq \bigoplus_{k \geq 0} H^0(E_0, \EEnd_{E_0}(T|_{E_0}) \otimes \Sym^k N_{E_0/Z_0} \otimes \det(N_{E_0/Z_0})), \]
where $N_{E_0/Z_0}$ is the normal bundle of $E_0 \subset Z_0$.
\end{rem}

\begin{thm} \label{inf def main thm}
Let $X_0$ be a Gorenstein normal affine variety, $\phi_0 : Y_0 \to X_0$ and $\phi'_0 : Y'_0 \to X_0$ two crepant resolutions of $X_0$.
Assume that 
\[ \codim_{Y_0}(\exc(\phi_0)) \geq 3 ~  \text{and} ~ \codim_{Y'_0}(\exc(\phi'_0)) \geq 3. \]
Then,
\begin{enumerate}
\item[(1)] there is a functor isomorphism $\Def Y_0 \simeq \Def Y'_0$.
\item[(2)] Let $A$ be a local Artinian algebra with residue field $\CC$. Let $Y$ and $Y'$ be deformations of $Y_0$ and $Y'_0$, respectively, over $A$ that correspond to an element $\xi \in (\Def Y_0)(A) \simeq (\Def Y'_0)(A)$.
Then, any strict tilting-type equivalence $\D(Y_0) \xrightarrow{\sim} \D(Y'_0)$ lifts to a strict tilting-type equivalence $\D(Y) \xrightarrow{\sim} \D(Y')$.
\end{enumerate}
\end{thm}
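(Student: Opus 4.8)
Part (1) is immediate: the codimension hypothesis holds for both resolutions, so Proposition \ref{prop 3-1} gives $\Def Y_0 \simeq \Def X_0$ and $\Def Y_0' \simeq \Def X_0$, and composing these yields the functor isomorphism $\Def Y_0 \simeq \Def Y_0'$. For (2), fix $\xi$ and let $X$, $Y$, $Y'$ be the corresponding deformations over $A$, forming $Y \xrightarrow{\phi} X \xleftarrow{\phi'} Y'$, where the morphisms and their projectivity come from the lemmas preceding Lemma \ref{inf defo proj}; recall $X$, $Y$, $Y'$ are Cohen-Macaulay with closed fibres homeomorphic to the total spaces. First I would isolate the common open set. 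Put $U_0 := (X_0)_{\mathrm{sm}} = Y_0 \setminus \exc(\phi_0) = Y_0' \setminus \exc(\phi_0')$ and let $U \subset X$ be the open subscheme with underlying space $U_0$. Since $\phi_0$, being crepant between smooth varieties over $U_0$, is an isomorphism there, the restriction $\phi^{-1}(U) \to U$ is proper and quasi-finite, hence finite, and then $(\phi_*\stsh_Y)|_U \simeq \stsh_U$ forces it to be an isomorphism. The same holds for $\phi'$, so $\phi^{-1}(U)$ and $\phi'^{-1}(U)$ are canonically identified with one flat deformation $U$ of the smooth variety $U_0$ over $A$, and $U$ is the largest common open subscheme of $X$, $Y$, $Y'$. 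I would also record that $\codim_{X_0}\Sing(X_0) \geq \codim_{Y_0}\exc(\phi_0) \geq 3$, so $U$ has complement of codimension $\geq 3$ in each of $X$, $Y$, $Y'$ (the case of $X_0$ smooth being trivial).

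Now let $\Phi_0 : \D(Y_0) \xrightarrow{\sim} \D(Y_0')$ be a strict tilting-type equivalence determined by $(T_0, T_0')$, equipped with an isomorphism $\beta_0 : T_0|_{U_0} \xrightarrow{\sim} T_0'|_{U_0}$ inducing the structure isomorphism $\psi_0 : \End_{Y_0}(T_0) \xrightarrow{\sim} \End_{Y_0'}(T_0')$ as in Lemma \ref{lem tilting crep canoni}. By Proposition \ref{Kar1} the tilting bundles $T_0$ and $T_0'$ lift uniquely to tilting bundles $T$ on $Y$ and $T'$ on $Y'$. The key point is that $T|_U \simeq T'|_U$: both $T|_U$ and $T'|_U$ are lifts of the (isomorphic) bundles $T_0|_{U_0} \simeq T_0'|_{U_0}$ to $U$, and Proposition \ref{prop 3-key1}, applied with $Z_0 = Y_0$ smooth of dimension $\geq 3$ and $\codim_{Y_0}(Y_0 \setminus U_0) = \codim_{Y_0}\exc(\phi_0) \geq 3$, says such a lift is unique. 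I would moreover lift $\beta_0$ itself: the vanishing $H^1(U_0, \EEnd_{U_0}(T_0|_{U_0})) = 0$ established inside the proof of Proposition \ref{prop 3-key1} makes $\Hom_U(T|_U, T'|_U) \to \Hom_{U_0}(T_0|_{U_0}, T_0'|_{U_0})$ surjective, so $\beta_0$ lifts to some $\beta : T|_U \to T'|_U$, which is automatically an isomorphism by Nakayama's lemma over the Artinian local ring $A$.

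To conclude, I would push $\beta$ up to the endomorphism algebras. Since $Y$ and $Y'$ are Cohen-Macaulay with complements of $U$ of codimension $\geq 3$, Corollary \ref{loc coho cor2} identifies $\End_Y(T) = H^0(Y, \EEnd_Y(T))$ with $H^0(U, \EEnd_U(T|_U))$, and likewise $\End_{Y'}(T')$ with $H^0(U, \EEnd_U(T'|_U))$; conjugation by $\beta$ then gives an $R$-algebra isomorphism $\psi : \End_Y(T) \xrightarrow{\sim} \End_{Y'}(T')$ (with $R = \Gamma(X, \stsh_X)$) which, reduced modulo $\mathfrak{m}_A$, is the isomorphism induced by $\beta_0$, that is, $\psi_0$. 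Hence $(T, T', \psi)$ determines, via Proposition \ref{def-prop tilting-type}, a tilting-type equivalence $\D(Y) \simeq \D(\End_Y(T)) \simeq \D(\End_{Y'}(T')) \simeq \D(Y')$; it is strict because $T|_U \simeq T'|_U$, and it is a lift of $\Phi_0$ in the sense of Definition \ref{def lift tilting-type} because $T$, $T'$ lift $T_0$, $T_0'$ and $\psi$ restricts to $\psi_0$. I expect the main obstacle to be precisely the uniqueness of lifts over the open $U$ (Proposition \ref{prop 3-key1}): it is what pins down $T|_U \simeq T'|_U$ and thereby forces the two endomorphism algebras to agree compatibly with $\psi_0$, and it is why the codimension-three hypothesis is essential here. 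The remaining points --- identifying $\phi^{-1}(U) \simeq \phi'^{-1}(U) \simeq U$ and checking that the lifted algebra isomorphism genuinely reduces to $\psi_0$ --- are routine bookkeeping.
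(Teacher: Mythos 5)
Your proposal is correct and follows essentially the same route as the paper: part (1) via Proposition \ref{prop 3-1}, then unique lifting of the tilting bundles (Proposition \ref{Kar1}), uniqueness of the lift over the common open set $U$ (Proposition \ref{prop 3-key1}) to get $T|_U \simeq T'|_U$, and the local-cohomology/Cohen--Macaulay argument to identify the endomorphism algebras. You additionally spell out that the induced algebra isomorphism restricts to the original one (as required by Definition \ref{def lift tilting-type}), a compatibility check the paper leaves implicit.
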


\begin{proof}
The first assertion follows from Proposition \ref{prop 3-1}.

Put $U_0 := Y_0 \setminus \exc(\phi_0) = (X_0)_{\mathrm{sm}} = Y'_0 \setminus \exc(\phi'_0)$
and set $U := Y|_{U_0} = Y'|_{U_0}$.
Let $T_0$ (resp. $T'_0$) be a tilting bundle on $Y_0$ (resp. $Y'_0$) such that $T_0|_{U_0} = T'_0|_{U_0}$,
and $T$ (resp. $T'$) the unique lifting of $T_0$ (resp. $T'_0$) on $Y$ (resp. $Y'$).
Then, by Proposition \ref{prop 3-key1}, we have $T|_{U} = T'|_{U}$.
Since $Y$ and $Y'$ are projective over an affine variety, from Proposition \ref{tilting equiv}, we have equivalences of categories
\[ \D(Y) \simeq \D(\End_{Y}(T)) \simeq \D(\End_{Y'}(T')) \simeq \D(Y'). \]
This is what we want.
\end{proof}

\begin{rem} \rm
Compare our result with the result of Toda \cite[Theorem 7.1]{Tod09}, where he proved a similar result for an equivalence for flops given as a Fourier-Mukai transform.
\end{rem}

%%%%%%%%%%%%%%%%%%%%%%%%%%%%%%%%%%%%%%%%%%%%%%%%%%%%%%%%%%%%%%%%%%%%%%%%%
\section{Deformation of crepant resolutions over a complete local ring} \label{sect complete local}

The goal of the present section is to prove the following theorem.

\begin{thm} \label{thm formal def}
Let $X_0 := \Spec R_0$ be a normal Gorenstein affine variety, and $\phi_0 : Y_0 \to X_0$ and $\phi_0' : Y'_0 \to X_0$ two crepant resolution of $X_0$.
Let $(D, d)$ be a Cohen-Macaulay complete local ring, and let a diagram
\[ \begin{tikzcd}
\mathcal{Y} \arrow[rd] \arrow[r, "\varphi"] & \mathcal{X} \arrow[d] & \mathcal{Y}' \arrow[l, "\varphi'"'] \arrow[ld] \\
& \Spec D &
\end{tikzcd} \]
be a deformation of varieties and morphisms above over $(D,d)$,
where $\mathcal{X} = \Spec \mathcal{R}$ is an affine scheme, and $\varphi$ and $\varphi'$ are projective morphisms.
Assume that 
\[ \codim_X \Sing(X_0) \geq 3. \]
Then any good and strict tilting-type equivalence for $Y_0$ and $Y'_0$ lifts to a good tilting type equivalence for $\mathcal{Y}$ and $\mathcal{Y}'$.

In addition, if the lift is determined by tilting bundles $(\mathcal{T}, \mathcal{T'})$, then we have $\varphi_* \mathcal{T} \simeq \varphi'_* \mathcal{T}'$.
\end{thm}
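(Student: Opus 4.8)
The plan is to reduce the complete-local case to the infinitesimal case treated in Section \ref{sect infinitesimal}, and then to descend the resulting formal tilting bundles to honest coherent sheaves by a Grothendieck-type algebraization argument. First I would set $D_n := D/d^{n+1}$ and $\mathcal{Y}_n := \mathcal{Y}\otimes_D D_n$, etc., and observe that $(D_n,d)$ lies in $\Art$ once we work over the closed point — more precisely, since $D$ is complete local with residue field $\CC$, each $D_n$ is a local Artinian $\CC$-algebra. The good and strict tilting-type equivalence on the central fibre is determined by good tilting bundles $T_0, T'_0$ with $T_0|_{U_0}\simeq T'_0|_{U_0}$, where $U_0 = (X_0)_{\mathrm{sm}}$. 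Applying Karmazyn's lifting result (Proposition \ref{Kar1}) inductively, $T_0$ and $T'_0$ lift uniquely to partial tilting (in fact tilting) bundles $T_n, T'_n$ on $\mathcal{Y}_n, \mathcal{Y}'_n$, and the trivial summand lifts to the trivial summand so the lifts stay good. The key point for strictness is that, because $\codim_{X_0}\Sing(X_0)\geq 3$ and crepant resolutions are isomorphisms over $U_0$, the open set $U_0$ has complementary codimension $\geq 3$ in each $Y_0$, so Proposition \ref{prop 3-key1} applies and gives $T_n|_{U_n}\simeq T'_n|_{U_n}$ for all $n$ compatibly. Hence on every infinitesimal neighbourhood we have a good strict tilting-type equivalence, and (using Corollary \ref{cor CMness} and Lemma \ref{lem wemyss} fibrewise) an algebra isomorphism $\End_{\mathcal{Y}_n}(T_n)\simeq \End_{\mathcal{X}_n}((\varphi_n)_*T_n)\simeq \End_{\mathcal{X}_n}((\varphi'_n)_*T'_n)\simeq \End_{\mathcal{Y}'_n}(T'_n)$, with $(\varphi_n)_*T_n\simeq(\varphi'_n)_*T'_n$ coming from reflexivity and agreement in codimension one.

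Next I would algebraize. The compatible system $\{T_n\}$ defines a coherent sheaf $\widehat{\mathcal{T}}$ on the formal completion $\widehat{\mathcal{Y}}$ of $\mathcal{Y}$ along the central fibre, flat over $D$; since $\varphi$ is projective and $D$ is complete (hence the formal GAGA / Grothendieck existence theorem applies), $\widehat{\mathcal{T}}$ is the completion of a unique coherent sheaf $\mathcal{T}$ on $\mathcal{Y}$, which is a vector bundle (local freeness is detected on the closed fibre, or on the completion) and flat over $D$, i.e. a lift of $T_0$. Likewise for $\mathcal{T}'$ on $\mathcal{Y}'$. Partial tilting-ness $\Ext^{>0}_{\mathcal{Y}}(\mathcal{T},\mathcal{T}) = 0$ follows from the vanishing on each $\mathcal{Y}_n$ together with the theorem on formal functions for $\varphi$ (the Ext groups over the affine base $\mathcal{X}$ are finitely generated $\mathcal{R}$-modules complete along the central fibre, and their completion vanishes); generation of $\mathrm{D}^-(\mathcal{Y})$ is inherited because $\mathcal{T}$ restricts to the generator $T_0$ on the closed fibre and $\mathcal{Y}\to\Spec D$ is proper with $D$ local — more carefully, one checks that $\RHom_{\mathcal{Y}}(\mathcal{T},F)=0$ forces $F\otimes^{\LLL}\CC=0$ via Nakayama-type reasoning over $D$, hence $F=0$. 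The trivial summand persists, so $\mathcal{T}$, $\mathcal{T}'$ are good tilting bundles.

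Finally I would produce the equivalence. By Proposition \ref{tilting equiv} applied over $\mathcal{X}=\Spec\mathcal{R}$, $\RHom(\mathcal{T},-)$ and $\RHom(\mathcal{T}',-)$ give equivalences $\D(\mathcal{Y})\simeq \D(\End_{\mathcal{Y}}(\mathcal{T}))$ and $\D(\mathcal{Y}')\simeq\D(\End_{\mathcal{Y}'}(\mathcal{T}'))$, so it remains to build an $\mathcal{R}$-algebra isomorphism $\End_{\mathcal{Y}}(\mathcal{T})\simeq\End_{\mathcal{Y}'}(\mathcal{T}')$ restricting correctly to the central fibre. Using Lemma \ref{lem wemyss} (whose proof only needs $\mathcal{X}$ normal Gorenstein with $\codim\Sing\geq 2$, which holds since this passes to the total space) we get $\End_{\mathcal{Y}}(\mathcal{T})\simeq\End_{\mathcal{X}}(\mathcal{M})$ and $\End_{\mathcal{Y}'}(\mathcal{T}')\simeq\End_{\mathcal{X}}(\mathcal{M}')$ where $\mathcal{M}:=\varphi_*\mathcal{T}$, $\mathcal{M}':=\varphi'_*\mathcal{T}'$; these are reflexive $\mathcal{R}$-modules, isomorphic over the smooth locus of $\mathcal{X}$ (because $\mathcal{T}|_{\mathcal{U}}\simeq\mathcal{T}'|_{\mathcal{U}}$, which follows from the codimension-$\geq 3$ lifting argument as above), hence isomorphic since reflexive modules are determined in codimension one — giving $\varphi_*\mathcal{T}\simeq\varphi'_*\mathcal{T}'$ and thus the desired algebra isomorphism, which by construction reduces mod $d$ to the one on the central fibre. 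The main obstacle I anticipate is the algebraization/formal-functions bookkeeping: one must be careful that Grothendieck existence applies to the non-proper-over-$\CC$ but proper-over-$\Spec D$ morphism $\varphi$, that the formal tilting bundle really descends and stays locally free, and that the equivalence statement (generation over the complete base, finiteness of the relevant $\Ext$ and $\Hom$ $\mathcal{R}$-modules) is correctly deduced from the fibrewise statements rather than asserted; by contrast the strictness and goodness propagate almost formally from Propositions \ref{Kar1} and \ref{prop 3-key1}.
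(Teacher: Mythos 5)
Your outline of the algebraization step (lift order by order with Karmazyn's result, then apply Grothendieck existence over the complete base) matches the paper, but the heart of the argument — why $T_n$ and $T'_n$ stay compatible over the common open set — contains a genuine gap. You assert that $\codim_{X_0}\Sing(X_0)\geq 3$ forces $U_0$ to have complement of codimension $\geq 3$ in $Y_0$ and $Y'_0$, so that Proposition \ref{prop 3-key1} applies. This is false: the exceptional locus $\exc(\phi_0)=\phi_0^{-1}(\Sing(X_0))$ typically has positive-dimensional fibres over $\Sing(X_0)$, so $\codim_{Y_0}\exc(\phi_0)\leq\codim_{X_0}\Sing(X_0)$ with strict inequality in general. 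The paper treats $\codim_{Y_0}\exc(\phi_0)\geq 3$ as a genuinely \emph{additional} hypothesis (Corollary \ref{cor thm form defo}), and the stratified flop computation in Theorem \ref{Mukai Atiyah lift} gives concrete cases ($2r=N-1$) where $\codim_{X_0}\Sing(X_0)=4$ but $\codim_{Y_0}\exc(\phi_0)=2$. So Proposition \ref{prop 3-key1} is simply not available under the stated hypotheses, and your argument only proves the weaker Corollary \ref{cor thm form defo}.

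The paper's actual mechanism is different and explains why the \emph{good} hypothesis is indispensable (your proposal never uses it substantively, which should have been a warning sign). One works on $X_0$, where the codimension hypothesis does hold: since $T_0$ is good, $M_0:=(\phi_0)_*T_0$ is Cohen--Macaulay, hence reflexive, hence a modifying module that is locally free in codimension two; by Dao's lemma $M_0$ is rigid, i.e.\ $\Ext^1_{R_0}(M_0,M_0)=0$. Since lifts of $M_0$ to $X_n$ form a torsor under $\Ext^1_{R_0}(M_0,M_0)\otimes J$, the two lifts $(\phi_n)_*T_n$ and $(\phi'_n)_*T'_n$ must coincide, and $T_n|_{U_n}\simeq T'_n|_{U_n}$ is then deduced as a \emph{consequence} by restricting to $U_n$ — the reverse of your order of deduction. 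Two further points in your write-up need care: the claim that $(\varphi_n)_*T_n\simeq(\varphi'_n)_*T'_n$ follows from ``reflexivity and agreement in codimension one'' is being made on the non-reduced thickenings $X_n$, where such arguments do not apply (the paper instead uses the flat extension theorem to propagate Cohen--Macaulayness and then compares sections over $U_n$); and Lemma \ref{lem wemyss} cannot be invoked directly on $\mathcal{X}$, which need not be a normal Gorenstein variety since $D$ is only assumed Cohen--Macaulay — the paper obtains $\End_{\mathcal{Y}}(\mathcal{T})\simeq\End_{\mathcal{R}}(\mathcal{M})$ by passing to the limit of the finite-level isomorphisms.
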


\begin{rem} \rm
The assumption $\codim_{X_0} \Sing(X_0) \geq 3$ is essential and there is a counter-example of this theorem if we remove this assumption
(see Section \ref{sect Eg counter}).
\end{rem}

\subsection{Preliminaries}

Recall that a finitely generated $S$-module $M$ is said to be \textit{rigid} if $\Ext^1_{S}(M, M) = 0$, and to be \textit{modifying} if $M$ is reflexive and $\End_S(M)$ is a maximal Cohen-Macaulay $S$-module.
By definition, if $M$ gives an NCCR of $S$ then $M$ is modifying.

\begin{lem}
Let $Z := \Spec S$ be a Cohen-Macaulay affine variety and $M$ is a modifying module over $S$ that is locally free in codimension two.
Assume that $\dim S \geq 3$ and $\codim \Sing(\Spec S) \geq 3$.
Then the module $M$ is rigid.
\end{lem}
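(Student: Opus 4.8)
The plan is to prove the equivalent statement $\Ext^1_S(M,M)=0$ by cutting the question down to the non-free locus of $M$ and running a depth estimate along it, using the local-cohomology machinery of Section \ref{sect prelim}.

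First I would let $W\subseteq Z=\Spec S$ be the (closed) non-free locus of $M$. Because $M$ is locally free in codimension two, every prime in $W$ has height $\ge 3$, so $\codim_Z W\ge 3$; and since $\Ext^1_{S_\mathfrak{p}}(M_\mathfrak{p},M_\mathfrak{p})=0$ whenever $M_\mathfrak{p}$ is free, the coherent sheaf $\mathcal{G}$ on $Z$ associated to the module $\Ext^1_S(M,M)$ is supported on $W$. Hence $\mathcal{H}^0_W(\mathcal{G})=\underline{\Gamma}_W(\mathcal{G})=\mathcal{G}$, so by Proposition \ref{loc coho prop} it is enough to prove $\depth_W\Ext^1_S(M,M)\ge 1$, i.e.\ $\mathcal{H}^0_W(\mathcal{G})=0$.

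For this I would record three depth bounds along $W$. Since $\End_S(M)$ is maximal Cohen--Macaulay over the Cohen--Macaulay ring $S$, its localisation at each $\mathfrak{p}$ has depth $\dim S_\mathfrak{p}$, so $\depth_W\End_S(M)=\codim_Z W\ge 3$. Since $M$ is reflexive (and $S$ is regular in codimension $\le 2$ by the hypothesis $\codim\Sing(\Spec S)\ge 3$), one has $\depth_{S_\mathfrak{p}}M_\mathfrak{p}\ge \min(2,\dim S_\mathfrak{p})$, hence $\depth_W M\ge 2$; and the standard estimate $\depth\Hom_S(-,M)\ge \min(2,\depth M)$ gives $\depth_W\Hom_S(\Omega,M)\ge 2$ for any finitely generated $S$-module $\Omega$. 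Now choose a free presentation $0\to\Omega\to F_0\to M\to 0$ and apply $\Hom_S(-,M)$ to get the four-term exact sequence
\[
0\to\End_S(M)\to \Hom_S(F_0,M)\to\Hom_S(\Omega,M)\to\Ext^1_S(M,M)\to 0 ,
\]
which I split as $0\to\End_S(M)\to \Hom_S(F_0,M)\to I\to 0$ and $0\to I\to\Hom_S(\Omega,M)\to\Ext^1_S(M,M)\to 0$. Passing to associated sheaves and using the long exact sequences of the functors $\mathcal{H}^i_W$ together with Proposition \ref{loc coho prop} (and $\Hom_S(F_0,M)\cong M^{\oplus r_0}$), the first sequence gives $\depth_W I\ge\min(\depth_W M,\ \depth_W\End_S(M)-1)=2$, and then the second gives $\depth_W\Ext^1_S(M,M)\ge\min(\depth_W\Hom_S(\Omega,M),\ \depth_W I-1)\ge 1$. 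Combined with the first paragraph this forces $\mathcal{G}=0$, i.e.\ $\Ext^1_S(M,M)=0$, so $M$ is rigid.

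The part needing care is the depth bookkeeping across the two short exact sequences: the estimate is exactly tight -- it uses $\depth_W\End_S(M)\ge 3$ to obtain $\depth_W I\ge 2$ and then one further unit drops to $\depth_W\Ext^1_S(M,M)\ge 1$, and it would fail if $M$ were only locally free in codimension one (consistent with the accompanying remark that the codimension-two hypothesis cannot be relaxed). The only genuinely non-formal inputs are that $\End_S(M)$ being maximal Cohen--Macaulay pins $\depth_W\End_S(M)$ to $\codim_Z W$ and that reflexivity of $M$ forces $\depth_W M\ge 2$; everything else is the local-cohomology long exact sequence from Lemma \ref{loc coho lem} and the depth criterion of Proposition \ref{loc coho prop}.
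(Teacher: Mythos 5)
Your argument is correct and is essentially the paper's proof with the black box opened: the paper checks exactly the same two facts — that $M$ satisfies $(S_2)$ (from reflexivity) and that $\End_S(M)$ satisfies $(S_3)$ along the relevant locus (from maximal Cohen--Macaulayness and $\dim S\ge 3$) — and then cites \cite[Lemma 2.3]{Dao10}, whose proof is precisely the depth chase along the non-free locus that you carry out. So the approach is the same; you have simply supplied a self-contained proof of the cited lemma, and your depth bookkeeping is accurate.
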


\begin{proof}
Since $M$ is reflexive, it satisfies $(S_2)$ condition.
In addition, since $\End_S(M)$ is maximal Cohen-Macaulay and $\dim S \geq 3$, $\End_S(M)$ satisfies $(S_3)$ condition.
Then the result follows from \cite[Lemma 2.3]{Dao10}.
\end{proof}

\begin{cor}
Let $Z = \Spec S$ be a normal Gorenstein affine variety of dimension greater than or equal to three and $\psi : W \to Z$ be a crepant resolution.
Assume that $\codim_Z \Sing(Z) \geq 3$, and that $W$ admits a good tilting bundle $T$.
Then the $S$-module $M := \psi_* T$ is rigid.

If the resolution $\psi : W \to Z$ is small, we can remove the assumption that $T$ is good.
\end{cor}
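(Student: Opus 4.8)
The plan is to reduce the corollary to the previous lemma by exhibiting, from the crepant resolution $\psi : W \to Z$ and the good tilting bundle $T$, a modifying module that is locally free in codimension two, and then to upgrade rigidity of that module to rigidity of $M = \psi_*T$ via the tilting equivalence. First I would set $M := \psi_*T$. By Lemma \ref{from tilting to NCCR}, since $T$ is a good tilting bundle on the crepant resolution $W$ of the normal Gorenstein variety $Z$, the module $M$ gives an NCCR $\End_Z(M) \simeq \End_W(T)$ of $S$; in particular $M$ is modifying (reflexive with $\End_S(M)$ maximal Cohen-Macaulay, using Corollary \ref{cor CMness} and Lemma \ref{lem wemyss}). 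The hypotheses $\dim S \geq 3$ and $\codim_Z \Sing(Z) \geq 3$ match those of the preceding lemma, so the only remaining point needed to apply it is that $M$ is locally free in codimension two.

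For that step I would argue as follows. Over the smooth locus $U := Z_{\mathrm{sm}}$, which has complement of codimension $\geq 3$ in $Z$, the resolution $\psi$ is an isomorphism, so $M|_U \simeq (\psi_*T)|_U \simeq T|_{\psi^{-1}(U)}$ is locally free; hence $M$ is locally free away from a closed set of codimension $\geq 3$, and in particular locally free in codimension two. (When $\psi$ is small this is exactly the same computation without needing $T$ good, since then $M = j_*(T|_U)$ for the open immersion $j : U \hookrightarrow Z$ with $\codim(Z \setminus U) \geq 2$ — but here $\codim \Sing(Z) \geq 3$ already, so this subtlety does not even arise and the ``if $\psi$ is small'' clause is automatic.) Applying the previous lemma to $M$ then gives $\Ext^1_S(M,M) = 0$, i.e. $M$ is rigid, which is the assertion.

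Alternatively, and perhaps more cleanly, I would deduce rigidity directly from the tilting equivalence: by Proposition \ref{tilting equiv} and Lemma \ref{lem wemyss} there is an equivalence $\D(W) \simeq \D(\End_Z(M))$ sending $T$ to the projective $\End_Z(M)$-module $\End_Z(M)$, so $\Ext^i_{\End_Z(M)}(\End_Z(M), \End_Z(M)) = 0$ for $i > 0$, but this only records that $\End_Z(M)$ has the expected self-Ext vanishing over itself, not over $S$, so one still needs the commutative-algebra input. The honest route is therefore through the lemma, and I expect the only real content to be the verification that $M$ is locally free in codimension two, which as explained above is immediate from $\codim_Z \Sing(Z) \geq 3$ together with $\psi$ being an isomorphism over $Z_{\mathrm{sm}}$. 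So the main obstacle is essentially bookkeeping: making sure the hypotheses ``$M$ modifying'', ``$M$ locally free in codimension two'', ``$\dim S \geq 3$'', ``$\codim \Sing \geq 3$'' are all in force, after which the preceding lemma closes the argument.
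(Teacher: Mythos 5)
Your proof is correct and follows essentially the same route as the paper: show that $M$ is modifying (reflexive via goodness of $T$, or via $M \simeq j_*(T|_U)$ when $\psi$ is small, with $\End_S(M)\simeq\End_W(T)$ maximal Cohen--Macaulay) and locally free in codimension two (since $\psi$ is an isomorphism over $Z_{\mathrm{sm}}$ and $\codim_Z\Sing(Z)\geq 3$), then invoke the preceding lemma. One small caveat: your parenthetical that the ``if $\psi$ is small'' clause is ``automatic'' is misleading --- $\codim_Z\Sing(Z)\geq 3$ does not force $\psi$ to be small (a crepant resolution can still contract a divisor to a locus of codimension $\geq 3$), and the actual content of that clause is that smallness supplies the reflexivity of $M$ without goodness, exactly as in the second half of Lemma \ref{from tilting to NCCR}; this does not affect the correctness of your argument for the statement as given.
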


\begin{proof}
Since we assumed that $\codim_Z \Sing(Z) \geq 3$, the module $M$ is locally free in codimension two.
In addition, since the tilting bundle $T$ is good, the module $M$ is Cohen-Macaulay and hence reflexive.
Note that if the resolution $\psi : W \to Z$ is small, we have that $M$ is reflexive without the assumption that $T$ is good.
Since $\End_W(T) \simeq \End_Z(M)$ is Cohen-Macaulay, the module $M$ is modifying.
Thus we have the result.
\end{proof}

The following proposition should be well-known, but we provide a sketch of the proof here because the author has no reference for this proposition.

\begin{prop}
Let $S_0$ be a $\CC$-algebra and $M_0$ a finitely generated $S_0$-module.
Let $A$ a local Artinian algebra with residue field $\CC$ and
\[ 0 \to J \to A' \to A \to 0 \]
an extension of $A$ such that $J^2 = 0$.
Let $S$ be a deformation of $S_0$ over $A$, $M$ a lift of $M_0$ over $S$, and $S'$ a lift of $S$ over $A'$.
Assume that there exists a lift $M'$ of $M$ over $S'$.
Then the lift of $M$ over $S'$ is a torsor of $\Ext^1_{S_0}(M_0, M_0) \otimes_{\CC} J$.
\end{prop}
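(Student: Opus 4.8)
The plan is to follow the standard deformation-theoretic argument for lifting modules, adapted to the relative (non-affine-base) setting over $S'$. First I would reduce to a local computation: choose a finite free presentation of $M$ over $S$, say $S^{\oplus b} \xrightarrow{g} S^{\oplus a} \to M \to 0$, lift the matrix $g$ arbitrarily to a matrix $g'$ over $S'$ (possible since $S'$ is free as a module over its "linear part" once we remember $J^2=0$ and work with the surjection $S'\to S$), and set $M' := \Cok(g')$. Since $J^2 = 0$, flatness of $M'$ over $A'$ is equivalent to the exactness of $0 \to M_0\otimes_\CC J \to M' \to M \to 0$; this is the local criterion for flatness in the presence of a square-zero extension. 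Given the hypothesis that at least one lift $M'$ exists, I would then compare two lifts $M'_1$ and $M'_2$.

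The key step is to produce, from two lifts, a well-defined class in $\Ext^1_{S_0}(M_0, M_0)\otimes_\CC J$, and conversely. Concretely, both $M'_1$ and $M'_2$ fit into extensions $0 \to M_0\otimes_\CC J \to M'_i \to M \to 0$ of $S'$-modules; reducing the ambient base to $\CC$ (i.e. viewing everything as $S_0$-modules, legitimate because $J$ is killed by $\mathfrak{m}_{A'}$ and hence is an $S_0$-module) and taking the difference of the two presentation matrices $g'_1 - g'_2$, which has entries in $J\cdot S_0^{\,a\times b}$, I get a map $S_0^{\oplus b}\to M_0\otimes_\CC J$ that kills $\operatorname{im}(g_0)$, hence a homomorphism $M_0 \to M_0\otimes_\CC J$... no — more precisely it represents an element of $\operatorname{Ext}^1_{S_0}(M_0, M_0\otimes_\CC J) \simeq \operatorname{Ext}^1_{S_0}(M_0,M_0)\otimes_\CC J$ (using that $J$ is a finite-dimensional $\CC$-vector space and $\operatorname{Ext}$ commutes with the finite direct sum). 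One checks this class is independent of the choice of lifted presentation matrices (a different choice changes $g'_i$ by a boundary), and that it vanishes iff $M'_1 \simeq M'_2$ as lifts. Transitivity and freeness of the action then follow formally: adding a cocycle to $g'$ produces a new lift, realizing every class, and the construction is compatible with composition.

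The main obstacle I expect is purely bookkeeping: carefully justifying that the relevant $\operatorname{Ext}$ group is computed over $S_0$ rather than over $S$ or $S'$, i.e. that the torsor structure "descends" to the special fiber. The clean way is to note $J$ is annihilated by $\mathfrak m_{A'}$, so $M_0\otimes_\CC J$ and all the Hom/Ext groups into it are naturally $S_0 = S'\otimes_{A'}\CC$-modules, and that a short exact sequence of $S'$-modules with outer terms killed by $\mathfrak m_{A'}$ is the same data as an $S_0$-module extension. A secondary subtlety is checking the flatness criterion $\operatorname{Tor}_1^{A'}(M',\CC)=0 \Leftrightarrow$ the sequence $0\to M_0\otimes J\to M'\to M\to 0$ is exact; this is standard (Serre's criterion / the local criterion of flatness for square-zero extensions) and I would just cite it.

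Since the paper explicitly says it only wants a sketch, I would phrase the write-up at the level of: (i) fix a presentation and form $M'$ from a lifted presentation matrix; (ii) invoke the square-zero flatness criterion; (iii) extract the $\operatorname{Ext}^1_{S_0}(M_0,M_0)\otimes_\CC J$-class from a difference of two lifts and check well-definedness; (iv) verify the action is free and transitive. I would not grind through the cocycle verifications, and I would reference a standard source such as Sernesi or Hartshorne's deformation-theory notes for the parallel statement in the absolute (affine over $A'$) case, remarking that only minor modifications are needed here because the base of $S'$ is not itself a point.
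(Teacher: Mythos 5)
Your proposal is correct and follows essentially the same route as the paper: both compare two lifts by choosing a common free cover, measuring their difference by a class in $\Ext^1_{S_0}(M_0, M_0\otimes_{\CC} J)$ computed from a free presentation, and observing that the ambiguity in the choice is absorbed by $\Hom_{S_0}(P_0, M_0\otimes_{\CC}J)$ (the paper phrases this via the first syzygy module $N_0$ rather than the presentation matrix, but the computation is identical). The only caveat is that lifting the presentation matrix arbitrarily need not yield a flat module, but since the proposition assumes a lift exists this does not affect your argument.
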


\begin{proof}
Let $M'_1$ and $M'_2$ be two lifts of $M$ over $S'$.
Then there exist a free $S'$-module $P'$ and surjective morphisms $P' \to M'_1$ and $P' \to M'_2$ whose restriction to $S$ coincide with each other.
Let us consider the following diagram
\[ \begin{tikzcd}
 & 0 \arrow[d] & 0 \arrow[d] & 0 \arrow[d] & \\
0 \arrow[r] & J \otimes_{\CC} N_0 \arrow[r] \arrow[d] & N_i' \arrow[r] \arrow[d] & N \arrow[d] \arrow[r] & 0 \\
0 \arrow[r] & J \otimes_{\CC} P_0 \arrow[r] \arrow[d] & P' \arrow[r] \arrow[d] & P \arrow[d] \arrow[r] & 0 \\
0 \arrow[r] & J \otimes_{\CC} M_0 \arrow[r] \arrow[d] & M'_i \arrow[r] \arrow[d] & M \arrow[d] \arrow[r] & 0 \\
& 0 & 0 & 0 &
\end{tikzcd} \]
Take an element $x \in N$ and choose its lifts $x'_1 \in N_1'$ and $x'_2 \in N'_2$.
Then we can consider the difference $x'_1 - x_2'$ in $P'$ and we have $x'_1 - x'_2 \in J \otimes_{\CC} P_0$.
Let $y \in J \otimes_{\CC} M_0$ be the image of $x'_1 - x'_2 \in J \otimes_{\CC} P_0$.
Then the correspondence $N \ni x \mapsto y \in J \otimes_{\CC} M_0$ gives an $S_0$-module homomorphism $\gamma : N_0 \to J \otimes_{\CC} M_0$.
It is easy to see that $N'_1 = N'_2$ as submodules of $P'$ if and only if $\gamma = 0$,
and the similar argument shows that the ambiguity of the choice of a surjective morphism $P' \to M'$ is resolved by $\Hom_{S_0}(P_0, J \otimes_{\CC} M_0)$.
Thus we have the result.
\end{proof}

Under the set-up of Theorem \ref{thm formal def}, put $(D_n, d_n) := (D/d^{n+1}, d/d^{n+1})$,
\[ X_n := \mathcal{X} \otimes_D D_n, ~ Y_n := \mathcal{Y} \otimes_D D_n, ~ Y_n' := \mathcal{Y} \otimes_D D_n, \]
and let
\[ \phi_n := \phi \otimes_D D_n : Y_n \to X_n, ~ \phi'_n := \phi' \otimes_D D_n : Y'_n \to X_n \]
be the projections.

\begin{lem} \label{good flat}
Let $T_0$ be a good tilting bundle on $Y_0$ and $T_n$ the lift of $T_0$ to $Y_n$.
Then a sheaf $M_n := (\phi_n)_* T_n$ on $X_n$ is a lift of $M_0 := (\phi_0)_* T_0$ to $X_n$.
\end{lem}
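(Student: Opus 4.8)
The plan is to show that $M_n = (\phi_n)_*T_n$ is flat over $D_n$ and that its restriction to the central fiber recovers $M_0 = (\phi_0)_*T_0$. The key tool is base change: since $T_0$ is a \emph{good} tilting bundle on a crepant resolution $\phi_0 : Y_0 \to X_0$, Corollary \ref{cor CMness} tells us that $M_0 = (\phi_0)_*T_0$ is a maximal Cohen--Macaulay $R_0$-module; in particular $R^i(\phi_0)_*T_0 = 0$ for $i > 0$ because $T_0$ splits off a trivial line bundle, so $H^i(Y_0, \stsh_{Y_0})$ (which equals $R^i(\phi_0)_*\stsh_{Y_0}$ by rationality of the resolution) vanishes as a summand and the relevant higher direct images vanish on the affine base $X_0 = \Spec R_0$. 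The vanishing of higher direct images is the geometric input that makes cohomology and base change work in families.

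First I would set up an induction on $n$ using the small extensions $0 \to d^n/d^{n+1} \to D_n \to D_{n-1} \to 0$, so that it suffices to pass from $D_{n-1}$ to $D_n$. By Proposition \ref{Kar1} (Karmazyn's lifting result), $T_n$ exists as the unique partial tilting lift of $T_0$ to $Y_n$, and being a summand of $T_n$ of the trivial line bundle persists, so $T_n$ is again good on $Y_n$ (its restriction to $Y_0$ being $T_0$, which contains $\stsh_{Y_0}$). Next I would invoke the theorem on cohomology and base change (in the form for the proper morphism $\phi_n : Y_n \to X_n$ over the Artinian — hence in particular Noetherian and affine — base): since $R^i(\phi_0)_*T_0 = 0$ for $i>0$, the same holds for $R^i(\phi_n)_*T_n$ for $i>0$ by the usual semicontinuity/base-change dévissage along the filtration of $D_n$, and the natural map $(\phi_n)_*T_n \otimes_{D_n} D_{n-1} \to (\phi_{n-1})_*T_{n-1}$ is an isomorphism. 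Flatness of $M_n$ over $D_n$ then follows from the local criterion of flatness applied inductively: at each step the relevant $\mathrm{Tor}$ obstruction lives in $R^1(\phi_0)_*T_0 \otimes (d^n/d^{n+1})$, which vanishes. Concretely, one uses the exact sequence obtained by tensoring $0 \to d^n/d^{n+1} \to D_n \to D_{n-1} \to 0$ with $T_n$ over $Y_n$ and pushing forward: the vanishing of $R^1(\phi_0)_*T_0$ forces $0 \to M_0 \otimes_{\CC} (d^n/d^{n+1}) \to M_n \to M_{n-1} \to 0$ to be exact, which is exactly the statement that $M_n$ is flat over $D_n$ with central fiber $M_0$.

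The main obstacle — though in this setting it dissolves — is controlling the higher direct image $R^1(\phi_n)_*T_n$ and ensuring it does not interfere with flatness. This is where the hypothesis that $T_0$ is good is genuinely used: without a trivial summand one cannot conclude $R^i(\phi_0)_*T_0 = 0$ from the rationality of $\phi_0$, and the push-forward $M_0$ need not even be Cohen--Macaulay or reflexive (compare the $A_1$-singularity example in the Remark after Lemma \ref{lem tilting crep canoni}, where $\phi_*T'$ fails to be reflexive). With goodness in hand, the cohomology-and-base-change machinery applies cleanly. I would close by remarking that $\phi_n$ being projective (hence proper) and $X_n$ affine are what allow us to state everything in terms of global sections / push-forwards, and that the identification $j_n^*M_n \simeq M_0$ (where $j_n : Y_0 \hookrightarrow Y_n$, abusing notation, or rather the base-change inclusion $X_0 \hookrightarrow X_n$) is the content of the base-change isomorphism at the bottom step $n = 0$.
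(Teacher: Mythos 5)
Your argument is correct and is essentially the paper's: goodness of the lifts $T_n$ gives $H^i(Y_n,T_n)=0$ for $i>0$, hence $M_n\simeq R(\phi_n)_*T_n$, and the flatness/base-change statement then follows --- the paper outsources this last step to \cite[Corollary 2.11]{Kar15}, which your small-extension induction with the local criterion of flatness reproves by hand. The one local slip is in your justification of $R^i(\phi_0)_*T_0=0$: this is not a consequence of $M_0$ being Cohen--Macaulay, nor of $H^i(Y_0,\stsh_{Y_0})=0$; the correct mechanism is that $H^i(Y_0,T_0)=\Ext^i_{Y_0}(\stsh_{Y_0},T_0)$ is a direct summand of $\Ext^i_{Y_0}(T_0,T_0)=0$ because $\stsh_{Y_0}$ is a summand of the partial tilting bundle $T_0$, and then affineness of $X_0$ converts this into the vanishing of the higher direct images.
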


\begin{proof}
Note that $T_n$ is also good and hence we have $M_n \simeq R{\phi_n}_* T_n$ and $M_0 \simeq R{\phi_0}_* T_0$.
Then the result follows from \cite[Corollary 2.11]{Kar15}.
\end{proof}

%In order to apply Proposition \ref{tilting equiv} to $\mathcal{Y}$ and $\mathcal{Y'}$, we need the following.

%\begin{lem}
%The morphisms $\varphi : \mathcal{Y} \to \mathcal{X}$ and $\varphi' : \mathcal{Y}' \to \mathcal{X}$ are projective.
%\end{lem}

%\begin{proof}
%Let $L_0$ be an ample line bundle on $Y_0$.
%Then as in Lemma \ref{inf defo proj}, there is a line bundle $L_n$ on $Y_n$ for $n \geq 1$ such that $L_n|_{Y_{n-1}} \simeq L_{n-1}$.
%Thus unwinding Grothendieck's existence theorem \cite[Tag 088F]{Sta} implies that there exists a line bundle $\mathcal{L}$ on $\mathcal{Y}$
%such that $\mathcal{L}|_{Y_n} \simeq L_n$ for $n \geq 0$.
%Let $\mathcal{F}$ be a coherent sheaf on $\mathcal{Y}$ and put $F_n := \mathcal{F}|_{Y_n}$ for $n \geq 0$.

%Since $L_0$ is ample, there exists an integer $m > 0$ such that $H^i(Y_0, F_0 \otimes L_0^{\otimes m}) = 0$ for $i > 0$.
%Then we also have $H^i(Y_n, F_n \otimes L_n^{\otimes m}) \otimes_{R_n} R_0 = 0$ for all $i >0$, where $R_n := R \otimes_D D_n$.
%Since the ideal $J_n \subset R_n := R \otimes_D D_n$ of $X_0 \subset X_n$ contained in the radial, Nakayama's lemma implies that we have $H^i(Y_n, F_n \otimes L_n^{\otimes m}) = 0$.
%Thus, applying unwinding Grothendieck's existence theorem again, we have
%\[ H^i(\mathcal{Y}, \mathcal{F} \otimes \mathcal{L}^{\otimes m}) = 0 \]
%for all $i > 0$.
%This means that $\mathcal{L}$ is an ample line bundle on $\mathcal{Y}$.
%\end{proof}

\subsection{Proof of the theorem}

\begin{proof}[Proof of Theorem \ref{thm formal def}]
%Put $(D_n, d_n) := (D/d^{n+1}, d/d^{n+1})$,
%\[ X_n := \mathcal{X} \otimes_D D_n, ~ Y_n := \mathcal{Y} \otimes_D D_n, ~ Y_n' := \mathcal{Y} \otimes_D D_n, \]
%and let
%\[ \phi_n := \phi \otimes_D D_n : Y_n \to X_n, ~ \phi'_n := \phi' \otimes_D D_n : Y'_n \to X_n \]
%be the projections.

Let $U_0$ be the common open subscheme of $X_0$, $Y_0$ and $Y'_0$.
Let $T_0$ and $T'_0$ be good tilting bundles on  $Y_0$ and $Y'_0$, respectively,
such that $T_0|_{U_0} \simeq T'_0|_{U_0}$.
Put $U_n := X_n|_{U_0}$.

%By Theorem \ref{inf def main thm} 
There exist good tilting bundles $T_n$ and $T'_n$ on $Y_n$ and $Y'_n$, respectively,
such that $T_n$ (resp. $T'_n$) is a lift of $T_0$ (resp. $T'_0$).
Then $(\phi_n)_*T_n$ and $(\phi'_n)_*T'_n$ are two lifts of $M_0 = H^0(T_0) = H^0(T_0')$ by Lemma \ref{good flat}.
By assumptions the module $M_0$ is rigid and hence we have
\[ (\phi_n)_*T_n \simeq (\phi'_n)_*T'_n. \]
In particular, we have
\[ T_n|_{U_n} \simeq T'_n|_{U_n}. \]
Next we show that we have an algebra isomorphism
\[ \End_{Y_n}(T_n) \simeq \End_{Y'_n}(T'_n). \]
As in the proof of Lemma \ref{good flat}, we have that 
$H^0(Y_n, T_n^{\vee} \otimes T_n)$ and $H^0(Y'_n, T'^{\vee}_n \otimes T'_n)$ are coherent sheaves on $X_n$ that are flat over $D_n$.
On the other hand, we have
\begin{align*}
&H^0(Y_n, T^{\vee}_n \otimes T_n) \otimes_{D_n} D_n/d_n \simeq H^0(Y_0, T_0^{\vee} \otimes T_0) ~ \text{and} \\
&H^0(Y'_n, T'^{\vee}_n \otimes T'_n) \otimes_{D_n} D_n/d_n \simeq H^0(Y_0, T'^{\vee}_0 \otimes T'_0)
\end{align*}
are Cohen-Macaulay modules over $X_0$.
Therefore the flat extension theorem \cite[Theorem 23.3]{Mat} implies that
$H^0(Y_n, T_n^{\vee} \otimes T_n)$ and $H^0(Y'_n, T'^{\vee}_n \otimes T'_n)$ are Cohen-Macaulay modules over $X_n$.
Thus we have algebra isomorphisms
\[ H^0(Y_n, T_n^{\vee} \otimes T_n) \simeq H^0(U_n, T_n^{\vee} \otimes T_n) \simeq H^0(U_n, T'^{\vee}_n \otimes T'_n) \simeq H^0(Y'_n, T'^{\vee}_n \otimes T'_n). \]
Finally unwinding Grothendieck's existence theorem \cite[Tag 088F]{Sta} and the result of Karmazyn \cite[Lemma 3.3]{Kar15} imply that there exist good tilting bundles $\mathcal{T}$ and $\mathcal{T}'$ on $\mathcal{Y}$ and $\mathcal{Y}'$,
 respectively, such that
 \[ \End_{\mathcal{Y}}(\mathcal{T}) \simeq \lim \End_{Y_n}(T_n) \simeq \lim \End_{Y'_n}(T'_n) \simeq \End_{\mathcal{Y'}}(\mathcal{T}'). \]
 Note that we have $(\varphi_*\mathcal{T})|_{X_n} \simeq (\phi_n)_* T_n \simeq (\phi'_n)_* T'_n \simeq (\varphi'_*\mathcal{T}')|_{X_n}$
 by construction.
 This implies that we have an isomorphism $\varphi_*\mathcal{T} \simeq \varphi'_*\mathcal{T}'$ (see \cite[Tag 087W]{Sta}).
 This shows the result.
 
 In addition, if we put $\mathcal{M} := \varphi_* \mathcal{T} \simeq \varphi'_* \mathcal{T}$, we also have isomorphisms
 \[ \End_{\mathcal{Y}} (\mathcal{T}) \simeq \lim \End_{Y_n}(T_n) \simeq \lim \End_{X_n}((\phi_n)_*T_n) = \End_{\mathcal{R}}(\mathcal{M}), \]
 which also follow from \cite[Tag 087W]{Sta}.
\end{proof}

The following corollary is a direct consequence of the proof of Theorem \ref{thm formal def}.

\begin{cor} \label{cor thm form defo}
Under the same set-up of Theorem \ref{thm formal def}, assume in addition that
\[ \codim_Y(\exc(\phi_0)) \geq 3. \]
Then any strict tilting-type equivalence for $Y_0$ and $Y'_0$ lifts to a tilting type equivalence for $\mathcal{Y}$ and $\mathcal{Y}'$.
\end{cor}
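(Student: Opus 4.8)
The plan is to rerun the proof of Theorem \ref{thm formal def} essentially verbatim, isolating the one place where the hypothesis ``good'' was genuinely used and replacing it with the new hypothesis $\codim_{Y_0}(\exc(\phi_0)) \geq 3$. In that proof, goodness entered only to guarantee that $M_0 := (\phi_0)_*T_0$ (and likewise $(\phi'_0)_*T'_0$) is Cohen--Macaulay, hence reflexive, hence rigid, so that the two lifts $(\phi_n)_*T_n$ and $(\phi'_n)_*T'_n$ of $M_0$ over $X_n$ coincide and thereby $T_n|_{U_n} \simeq T'_n|_{U_n}$. I would instead establish $T_n|_{U_n} \simeq T'_n|_{U_n}$ directly, via Proposition \ref{prop 3-key1} in place of any rigidity argument; once that is in hand, the rest of the proof of Theorem \ref{thm formal def} goes through with the word ``good'' simply deleted. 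We may assume $\dim X_0 \geq 3$, since otherwise $\codim_{X_0}\Sing(X_0) \geq 3$ forces $X_0$ to be smooth and there is nothing to prove.

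The new input is as follows. Let $(T_0,T'_0)$ determine the given strict tilting-type equivalence, so $T_0|_{U_0} \simeq T'_0|_{U_0}$ where $U_0 := Y_0 \setminus \exc(\phi_0) = (X_0)_{\mathrm{sm}} = Y'_0 \setminus \exc(\phi'_0)$ (crepancy forces $\phi_0$ and $\phi'_0$ to be isomorphisms over the smooth locus of $X_0$). Put $U_n := X_n|_{U_0}$. Since $\phi_n$ and $\phi'_n$ are deformations of $\phi_0$ and $\phi'_0$ over the Artinian ring $D_n$ and flatness over an Artinian base propagates isomorphisms, there are canonical identifications $Y_n|_{U_0} \simeq U_n \simeq Y'_n|_{U_0}$. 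By Proposition \ref{Kar1} the bundles $T_0$, $T'_0$ admit unique partial tilting lifts $T_n$ on $Y_n$ and $T'_n$ on $Y'_n$, and under the above identifications $T_n|_{U_n}$ and $T'_n|_{U_n}$ are two lifts of the single bundle $T_0|_{U_0} = T'_0|_{U_0}$ to the single scheme $U_n$. Now apply Proposition \ref{prop 3-key1} with $Z_0 = Y_0$, $Z = Y_n$, and open subscheme $U_0$: the hypotheses hold because $Y_0$ is smooth (hence Cohen--Macaulay), $\dim Y_0 = \dim X_0 \geq 3$, and $\codim_{Y_0}(Y_0 \setminus U_0) = \codim_{Y_0}(\exc(\phi_0)) \geq 3$. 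The conclusion is that the lift of $T_0|_{U_0}$ to $U_n$ is unique, whence $T_n|_{U_n} \simeq T'_n|_{U_n}$. Note this invokes the codimension bound only for $\phi_0$: what the proposition really uses is the vanishing $H^1(U_0, \EEnd_{U_0}(T_0|_{U_0})) = 0$, a statement about $U_0$ alone.

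From here the argument of Theorem \ref{thm formal def} applies without goodness. The sheaf $T_n^{\vee} \otimes T_n$ has no higher direct images along $\phi_n$ (since $\Ext_{Y_0}^i(T_0,T_0) = 0$ for $i>0$), so $(\phi_n)_*(T_n^{\vee} \otimes T_n)$ is a $D_n$-flat coherent sheaf on $X_n$ whose reduction modulo $d_n$ is $\End_{Y_0}(T_0)$, a Cohen--Macaulay module over $X_0$ by Corollary \ref{cor CMness}; the flat extension theorem \cite[Theorem 23.3]{Mat} then makes $(\phi_n)_*(T_n^{\vee} \otimes T_n)$ Cohen--Macaulay over $X_n$, and likewise on the $Y'_n$ side. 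Since $X_n \setminus U_n$ (topologically the singular locus of $X_0$) has codimension $\geq 3 \geq 2$ in $X_n$, Corollary \ref{loc coho cor2} yields algebra isomorphisms
\[ H^0(Y_n, T_n^{\vee} \otimes T_n) \simeq H^0(U_n, T_n^{\vee} \otimes T_n) \simeq H^0(U_n, T'^{\vee}_n \otimes T'_n) \simeq H^0(Y'_n, T'^{\vee}_n \otimes T'_n), \]
that is, $\End_{Y_n}(T_n) \simeq \End_{Y'_n}(T'_n)$, compatibly with the given isomorphism over $X_0$. Finally Grothendieck's existence theorem \cite[Tag 088F]{Sta} together with \cite[Lemma 3.3]{Kar15} algebraizes $\{T_n\}$ and $\{T'_n\}$ to tilting bundles $\mathcal{T}$ on $\mathcal{Y}$ and $\mathcal{T}'$ on $\mathcal{Y}'$ with
\[ \End_{\mathcal{Y}}(\mathcal{T}) \simeq \lim \End_{Y_n}(T_n) \simeq \lim \End_{Y'_n}(T'_n) \simeq \End_{\mathcal{Y}'}(\mathcal{T}'), \]
and Proposition \ref{def-prop tilting-type} converts this into a tilting-type equivalence $\D(\mathcal{Y}) \simeq \D(\mathcal{Y}')$; by construction it restricts over $X_0$ to the given equivalence, so it is a lift in the sense of Definition \ref{def lift tilting-type}.

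The step I expect to require the most care is the second one: pinning down the canonical identifications $Y_n|_{U_0} \simeq U_n \simeq Y'_n|_{U_0}$ precisely enough that ``$T'_n|_{U_n}$'' and ``$T_n|_{U_n}$'' are literally lifts of one and the same bundle on one and the same scheme, so that a single application of Proposition \ref{prop 3-key1} to $\phi_0$ already forces them to coincide — which is exactly why the corollary imposes the codimension bound only on $\exc(\phi_0)$. Everything else is a mechanical rerun of the proof of Theorem \ref{thm formal def} with ``good'' removed.
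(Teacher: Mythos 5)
Your proof is correct, but it routes the crucial step differently from what the paper intends. The paper derives this corollary by rerunning the proof of Theorem \ref{thm formal def} with the rigidity mechanism intact: the corollary to the rigidity lemma explicitly notes that when the resolution is small one can drop goodness in proving that $M_0=(\phi_0)_*T_0$ is reflexive, hence modifying, hence rigid, so that $(\phi_n)_*T_n$ and $(\phi'_n)_*T'_n$ are isomorphic lifts of $M_0$ and therefore $T_n|_{U_n}\simeq T'_n|_{U_n}$. You instead obtain $T_n|_{U_n}\simeq T'_n|_{U_n}$ directly from the uniqueness of lifts of $T_0|_{U_0}$ furnished by Proposition \ref{prop 3-key1}, i.e.\ you import the technique of the infinitesimal section rather than the rigidity argument. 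Your route buys two things. First, it never requires $(\phi_n)_*T_n$ or $(\phi'_n)_*T'_n$ to be a flat lift of $M_0$; in the paper that flatness comes from Lemma \ref{good flat}, whose proof uses goodness to kill $R^{>0}(\phi_n)_*T_n$, and it is not obvious that this vanishing survives once goodness is dropped, so your argument is arguably more watertight than a literal rerun of the theorem's proof. Second, as you note, Proposition \ref{prop 3-key1} only needs the codimension bound on $\exc(\phi_0)$ (through the vanishing $H^1(U_0,\EEnd_{U_0}(T_0|_{U_0}))=0$), which matches the asymmetric hypothesis of the corollary, whereas the rigidity route would also want $(\phi'_0)_*T'_0$ reflexive, i.e.\ goodness of $T'_0$ or smallness of $\phi'_0$. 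The remaining steps (Cohen--Macaulayness of the endomorphism algebras via the flat extension theorem, the depth argument on $U_n$, and the algebraization via Grothendieck existence and Karmazyn) coincide with the paper's, and the one delicate identification $Y_n|_{U_0}\simeq X_n|_{U_0}\simeq Y'_n|_{U_0}$ that you flag is indeed needed but harmless: a finite morphism of $D_n$-flat schemes that is an isomorphism on closed fibers is an isomorphism, and the paper uses the same identification implicitly.
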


%%%%%%%%%%%%%%%%%%%%%%%%%%%%%%%%%%%%%%%%%%%%%%%%%%%%%%%%%%%%%%%%%%%%%%%%%
\section{Deformation with an action of $\Gm$} \label{sect equivariant}

\subsection{$\Gm$-action}

First of all, we recall some basic definitions and properties of actions of a group $\Gm$ on schemes.

\begin{defi} \label{def good action} \rm
We say that a $\Gm$-action on an affine scheme $\Spec R$ is \textit{good} if there is a unique $\Gm$-fixed closed point corresponding to a maximal ideal $\mathfrak{m}$,
such that $\Gm$ acts on $\mathfrak{m}$ by strictly positive weights. 
\end{defi}

The advantage to consider good $\Gm$-actions is that we can use the following useful theorem.

\begin{thm}[\cite{Kal08}, Theorem 1.8 (ii)] \label{Kal08 thm1.8}
Let $Y$ be a scheme that is projective over an affine variety $X = \Spec R$,
and assume that $X$ admits a good $\Gm$-action that lifts to a $\Gm$-action on $Y$.
Let $\widehat{X}$ be a completion of $X = \Spec R$ with respect to the maximal ideal $\mathfrak{m} \subset R$ that corresponds to a unique fixed point $x \in X$.

Then any tilting bundle on $\widehat{Y} := Y \times_X \widehat{X}$ is obtained by a pull-back of a $\Gm$-equivariant tilting bundle on $Y$.
\end{thm}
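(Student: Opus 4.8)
The statement to prove is Theorem \ref{Kal08 thm1.8} (attributed to Kaledin), namely that when $X = \Spec R$ carries a good $\Gm$-action lifting to $Y$, every tilting bundle on the completion $\widehat{Y} = Y \times_X \widehat{X}$ comes from a $\Gm$-equivariant tilting bundle on $Y$ by pullback. The plan is to reduce this to an equivariantization-plus-algebraization argument, exploiting the contracting nature of the good action near the fixed point. First I would recall that, because the $\Gm$-action on $R$ has strictly positive weights on $\mathfrak{m}$, the ring $R$ is non-negatively graded with $R_0 = \CC$ (or more generally Artinian local), so $X$ is, in the graded sense, a ``cone'' with vertex the fixed point $x$, and $Y$ is projective over it; the completion $\widehat{X}$ along $\mathfrak{m}$ then has the key property that coherent sheaves on $\widehat{Y}$ are controlled by their restrictions to the formal neighbourhoods $Y_n = Y \times_X \Spec(R/\mathfrak{m}^{n+1})$ via Grothendieck existence, just as in the proof of Theorem \ref{thm formal def}.

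\begin{proof}[Sketch]
Let $T$ be a tilting bundle on $\widehat{Y}$. The first step is to show $T$ is \emph{algebraizable}, i.e. it descends to a tilting bundle on $Y$: since $\widehat{Y} \to \widehat{X} = \Spec \widehat{R}$ is projective and $\widehat{R}$ is a complete local ring, Grothendieck's existence theorem (\cite[Tag 088F]{Sta}) together with the deformation-theoretic lifting of partial tilting bundles (Proposition \ref{Kar1}) shows that $T$ is the completion of a unique coherent sheaf $T^{\mathrm{alg}}$ on $Y$; one checks $T^{\mathrm{alg}}$ is again locally free and, using flat base change for $\Ext$-groups along the faithfully flat map $Y \to \widehat{Y}$ (note $\widehat{Y} \to Y$ need not be flat, so one argues instead that $R \to \widehat{R}$ is flat and $\Ext^p_{Y}(T^{\mathrm{alg}}, T^{\mathrm{alg}}) \otimes_R \widehat{R} \simeq \Ext^p_{\widehat{Y}}(T,T) = 0$ for $p>0$ by coherence and completeness), that $T^{\mathrm{alg}}$ is partial tilting; generation descends similarly. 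The second step is to equivariantize $T^{\mathrm{alg}}$. Here one uses that the good $\Gm$-action on $X$ contracts everything to the fixed point $x$: the completion $\widehat{X}$ is $\Gm$-stable, and one has an action map and its limit. Concretely, consider the two pullbacks of $T^{\mathrm{alg}}$ along $a, p : \Gm \times Y \to Y$ (the action and the projection); restricting to the formal neighbourhood of the fixed fibre and using uniqueness of lifts of the partial tilting bundle $T_0 := T^{\mathrm{alg}}|_{Y_0}$ in Proposition \ref{Kar1} (applied over the base $\Gm \times \Spec(R/\mathfrak{m}^{n+1})$), one obtains a canonical isomorphism $a^*T^{\mathrm{alg}} \simeq p^*T^{\mathrm{alg}}$ on each $\Gm \times Y_n$, hence on $\Gm \times \widehat{Y}$, hence — by algebraization again — on $\Gm \times Y$. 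The cocycle condition for this isomorphism is checked on the formal neighbourhood, where it again follows from the uniqueness clause of Proposition \ref{Kar1}, so $T^{\mathrm{alg}}$ acquires a $\Gm$-equivariant structure. Finally, pulling the $\Gm$-equivariant bundle $T^{\mathrm{alg}}$ back to $\widehat{Y}$ recovers $T$ (by the uniqueness in Grothendieck existence), which is the claim.
\end{proof}

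The step I expect to be the main obstacle is the construction of the $\Gm$-equivariant structure, specifically verifying that the isomorphism $a^*T^{\mathrm{alg}} \simeq p^*T^{\mathrm{alg}}$ is canonical and satisfies the cocycle identity: this requires the uniqueness of lifts in Proposition \ref{Kar1} to be applied not over an Artinian base but over $\Gm \times \Spec(R/\mathfrak{m}^{n+1})$, i.e. one needs a relative version of the uniqueness statement over the non-Artinian (but nice) base $\Gm$, which in turn rests on the vanishing $H^1$ coming from the partial tilting hypothesis being preserved under the flat base change to $\Gm$. The delicate point is bookkeeping the compatibility of all these isomorphisms as $n$ varies and then passing to the limit; the contracting property of the good action is what guarantees that the formal neighbourhood of the fixed fibre ``sees'' enough of $Y$ for this descent to be faithful. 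Once the equivariant structure is in hand, the remaining identifications are formal consequences of Grothendieck existence and the uniqueness statements already available in the excerpt.
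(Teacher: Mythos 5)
The paper does not prove this statement: it is imported verbatim from Kaledin (\cite{Kal08}, Theorem 1.8 (ii)) and used as a black box, so there is no in-paper proof to compare against. Judged on its own terms, your sketch has the logical order of the two steps reversed, and this is not a cosmetic issue but a genuine gap.

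Your Step 1 (algebraization before equivariantization) does not work as stated. Grothendieck's existence theorem identifies coherent sheaves on $\widehat{Y}$ (projective over the complete local ring $\widehat{R}$) with compatible systems on the infinitesimal thickenings $Y_n$; it says nothing about descending a sheaf along $\widehat{Y}=Y\times_X\Spec\widehat{R}\to Y$, i.e.\ from $\widehat{R}$ back to the finite-type ring $R$. That descent is precisely the hard content of the theorem, it fails for general coherent sheaves on a completion (already for modules over the completion of a one-dimensional non-normal domain), and the only thing that rescues it here is the good $\Gm$-action: completion is an equivalence on the category of finitely generated \emph{$\Gm$-equivariant} modules (this is exactly Lemma \ref{Kal08 lem5.3} of the paper). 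So one must first put a $\Gm$-equivariant structure on the tilting bundle over the completion and only then algebraize; your sketch constructs the equivariant structure on an object $T^{\mathrm{alg}}$ whose existence presupposes the conclusion. Once the order is corrected, the flat-base-change argument you give for descending the $\Ext$-vanishing and the generation property is fine.

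Your Step 2 also leans on the wrong rigidity. You propose to produce $a^*T\simeq p^*T$ on $\Gm\times Y_n$ from the uniqueness clause of Proposition \ref{Kar1}, whose base case requires $\Ext^1_{Y_0}(T_0,T_0)=0$ for the restriction $T_0$ of $T$ to the central fibre $Y_0$. That vanishing does not follow from $T$ being tilting on $\widehat{Y}$ (restriction of a partial tilting bundle to a fibre need not be partial tilting), so the induction has no starting point and the cocycle verification inherits the same problem. The rigidity that is actually available, and that Kaledin uses (the paper replays this in Step 2 of the proof of Theorem \ref{main thm equiv}), is $\Ext^1_{\widehat{Y}}(\widehat{T},\widehat{T})=0$ on the whole completion: one lifts the derivation $\xi$ generating the $\Gm$-action to an automorphism of $\widehat{T}\otimes\CC[\varepsilon]$ over the dual numbers, integrates it to a genuine $\Gm$-equivariant structure via \cite[Lemma 5.2]{Kal08} (exponentiating on the finite-dimensional quotients $\widehat{M}/\mathfrak{m}^n\widehat{M}$, which is where the positivity of the weights of the good action enters), and only then descends to $Y$ by Lemma \ref{Kal08 lem5.3}.
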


In relation to the theorem above, see also \cite[Appendix A]{Nam08} and \cite[Proposition 5.1]{Kar15}.

\begin{lem}[\cite{Kal08}, Lemma 5.3] \label{Kal08 lem5.3}
Let $R$ be a $\CC$-algebra of finite type such that the corresponding affine scheme $\Spec R$ admits a good $\Gm$-action.
Let $\mathfrak{m} \subset R$ be the maximal ideal that corresponds to the unique fixed point of $\Spec R$, and
$\widehat{R}$ the completion of $R$ with respect to the maximal ideal $\mathfrak{m}$.

Then, the $\mathfrak{m}$-adic completion functor gives an equivalence between the category of finitely generated $\Gm$-equivariant $R$-modules and the category of complete Noetherian $\Gm$-equivariant $\widehat{R}$-modules.
\end{lem}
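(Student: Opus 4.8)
The plan is to exploit the good $\Gm$-action to turn everything into a statement about graded modules. Since the action on $\Spec R$ is good, writing $R=\bigoplus_{i\in\mathbb{Z}}R_i$ for the weight decomposition, the hypotheses that there is a \emph{unique} fixed point with maximal ideal $\mathfrak{m}$ and that $\Gm$ acts on $\mathfrak{m}$ by strictly positive weights force $R_{<0}=0$ and $R_0=\CC$, so $R$ is non-negatively graded and $\mathfrak{m}=\bigoplus_{i>0}R_i$. As $R$ is of finite type it is generated by finitely many homogeneous elements of strictly positive degree; hence each $R_i$ is a finite-dimensional $\CC$-vector space, and the filtrations by $\mathfrak{m}^{n}$ and by $\bigoplus_{i\ge n}R_i$ are cofinal. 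Consequently $\widehat{R}=\varprojlim R/\mathfrak{m}^{n}\cong\prod_{i\ge 0}R_i$, and the elementary but crucial point I would isolate first is that a $\Gm$-weight vector of weight $d$ in $\widehat{R}$ is simply an element of $R_d\subset R$.

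Next I would make both sides explicit. A finitely generated $\Gm$-equivariant $R$-module is the same as a finitely generated $\mathbb{Z}$-graded $R$-module $M=\bigoplus_i M_i$; since $R$ is non-negatively graded with $R_0=\CC$ and $M$ is finitely generated, $M$ is bounded below with each $M_i$ finite-dimensional. A complete Noetherian $\Gm$-equivariant $\widehat{R}$-module $N$ is a finitely generated $\widehat{R}$-module with a $\Gm$-action compatible with the one on $\widehat{R}$ that is rational, i.e.\ each $N/\widehat{\mathfrak{m}}^{\,n}N$ is an algebraic $\Gm$-representation; by complete reducibility of $\Gm$-representations this amounts to compatible weight decompositions of all the $N/\widehat{\mathfrak{m}}^{\,n}N$. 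The completion functor $M\mapsto\widehat{M}:=M\otimes_R\widehat{R}=\varprojlim M/\mathfrak{m}^{n}M\cong\prod_i M_i$ sends the first category to the second, and I claim it is an equivalence with quasi-inverse $N\mapsto N^{\Gm\text{-fin}}$, the sum of all finite-dimensional $\Gm$-subrepresentations, which equals the algebraic direct sum $\bigoplus_i N_i$ of weight spaces.

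For full faithfulness, for finitely generated graded $M,M'$ the $R$-module $\Hom_R(M,M')$ is finitely generated and graded ($R$ being Noetherian), so flatness of completion gives $\Hom_R(M,M')\otimes_R\widehat{R}\cong\Hom_{\widehat{R}}(\widehat{M},\widehat{M'})$ $\Gm$-equivariantly; passing to weight-$0$ parts identifies the degree-$0$ graded homomorphisms on the left (already complete, being finite-dimensional) with the equivariant $\widehat{R}$-homomorphisms on the right. For essential surjectivity, given $N$ I lift a homogeneous ($\Gm$-weight) basis of the rational representation $N/\widehat{\mathfrak{m}}N$ to $\Gm$-weight elements of $N$ — building the lifts step by step through $N=\varprojlim N/\widehat{\mathfrak{m}}^{\,n}N$ using complete reducibility — to get a $\Gm$-equivariant surjection $\bigoplus_k\widehat{R}(-a_k)\twoheadrightarrow N$; by Noetherianity the kernel is again finitely generated and equivariant, so $N$ has a $\Gm$-equivariant presentation $\widehat{R}(-\mathbf{b})\to\widehat{R}(-\mathbf{a})\to N\to 0$ by a matrix of $\Gm$-weight elements of $\widehat{R}$. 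Since such elements lie in $R$, the same matrix presents a finitely generated graded module $M:=\Cok\bigl(R(-\mathbf{b})\to R(-\mathbf{a})\bigr)$ with $\widehat{M}\cong N$ by right-exactness of completion. The round-trip identities $(\widehat{M})^{\Gm\text{-fin}}=\bigoplus_i M_i=M$ and $\widehat{N^{\Gm\text{-fin}}}\cong N$ (using that $N^{\Gm\text{-fin}}$ is finitely generated over $R$, just shown, and dense in the complete module $N$) are then routine. The step I expect to be the main obstacle is the equivariant lifting through the completion in the essential-surjectivity argument: one must verify that the rationality hypothesis on $N$ is exactly strong enough to produce genuine $\Gm$-weight generators and relations, which is precisely where the finite-dimensionality of the graded pieces — a consequence of the goodness of the action — and complete reducibility of $\Gm$-representations both come in.
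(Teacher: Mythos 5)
The paper offers no proof of this lemma --- it is imported verbatim as \cite[Lemma 5.3]{Kal08} --- so there is nothing internal to compare against; your argument is in any case the standard one and is essentially the proof Kaledin gives: identify equivariant modules with graded modules, observe that goodness forces $R_0=\CC$, $R_{<0}=0$ and $\dim_{\CC}R_i<\infty$ so that the $\mathfrak m$-adic and degree filtrations are cofinal, and pass back and forth via $\prod_i M_i \leftrightarrow \bigoplus_i N_i$. I find the argument correct and complete. The one step worth a sentence more than you give it is in the essential-surjectivity induction: to rerun the weight-lifting argument on the kernel $K$ of $\bigoplus_k\widehat R(-a_k)\twoheadrightarrow N$ you need each $K/\widehat{\mathfrak m}^{\,n}K$ to be a rational representation, which is not part of the definition of $K$ but follows from Artin--Rees (the $\widehat{\mathfrak m}$-adic topology on $K$ agrees with the subspace topology from the free module, and subquotients of rational $\Gm$-representations are rational). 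With that remark added, the proof stands.
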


\subsection{Result}

Let $X_0 := \Spec R_0$ be a normal Gorenstein affine variety, and $\phi_0 : Y_0 \to X_0$ and $\phi_0' : Y'_0 \to X_0$ two crepant resolutions of $X_0$.
Let $(\Spec D, d)$ be a pointed affine variety, and let the diagram
\[ \begin{tikzcd}
Y \arrow[rd] \arrow[r, "\phi"] & X \arrow[d] & Y' \arrow[l, "\phi'"'] \arrow[ld] \\
& \Spec D &
\end{tikzcd} \]
be a deformation of varieties and morphisms above over $(\Spec D,d)$
such that the varieties $Y$, $Y'$ and $X$ are $\Gm$-variety and the morphisms $\phi$ and $\phi'$ are projective and $\Gm$-equivariant.
Assume that $X = \Spec R$ is an affine \textit{variety} and that the $\Gm$-action on $X$ is good whose fixed point $x \in X$ is a point over $d \in \Spec D$.

In this subsection, we prove the following theorem.

\begin{thm} \label{main thm equiv}
Under the set-up above, assume in addition that the inequality $\codim_{X_0} \Sing(X_0) \geq 3$ holds.
Then any good and strict tilting-type equivalence between $\D(Y_0)$ and $\D(Y'_0)$ lifts to a tilting type equivalence for $\D(Y)$ and $\D(Y')$.

In addition, if the lift is determined by tilting bundles $(T, T')$, then we have $\phi_*T \simeq \phi'_*T'$.
\end{thm}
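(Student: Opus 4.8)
The plan is to reduce to the complete local case treated in Theorem \ref{thm formal def} and then transport the lift it produces back to the variety $\Spec D$ by exploiting the good $\Gm$-action, via Kaledin's results. First I would complete the base: let $\widehat D$ be the completion of $D$ at $d$ and set $\mathcal X := X\times_{\Spec D}\Spec\widehat D$, $\mathcal Y := Y\times_{\Spec D}\Spec\widehat D$, $\mathcal Y' := Y'\times_{\Spec D}\Spec\widehat D$, with the induced morphisms $\varphi : \mathcal Y\to\mathcal X$ and $\varphi' : \mathcal Y'\to\mathcal X$. Completing the base leaves the closed fibres unchanged, so $\mathcal X=\Spec\mathcal R$ is affine, $\varphi$ and $\varphi'$ are projective, and this is a deformation over $\widehat D$ of the crepant resolutions $\phi_0,\phi_0'$ of $X_0$, with $\codim_{X_0}\Sing(X_0)\geq 3$. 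Hence Theorem \ref{thm formal def} applies: the given good and strict tilting-type equivalence, determined by good tilting bundles $(T_0,T_0')$ with $T_0|_{U_0}\simeq T_0'|_{U_0}$, lifts over $\widehat D$ to a good tilting-type equivalence determined by good tilting bundles $(\mathcal T,\mathcal T')$ on $(\mathcal Y,\mathcal Y')$ with $\varphi_*\mathcal T\simeq\varphi'_*\mathcal T'$ and with $\End_{\mathcal Y}(\mathcal T)\simeq\End_{\mathcal Y'}(\mathcal T')$ restricting over $X_0$ to the given isomorphism. (If $\widehat D$ fails to be Cohen--Macaulay one instead completes $R$ at the fixed point and reruns the proof of Theorem \ref{thm formal def}, which is valid over any normal Gorenstein complete local ring with $\codim\Sing\geq 3$, obtaining the bundles of the next step directly.)

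Next I would complete at the $\Gm$-fixed point. Let $\mathfrak m\subset R$ be the maximal ideal of the unique fixed point $x$, let $\widehat X := \Spec\widehat R$ be the $\mathfrak m$-adic completion of $X$, and set $\widehat Y := Y\times_X\widehat X = \mathcal Y\times_{\mathcal X}\widehat X$ and $\widehat Y' := Y'\times_X\widehat X$, with induced morphisms $\widehat\phi,\widehat\phi'$. Then $\widehat T := \mathcal T|_{\widehat Y}$ and $\widehat T' := \mathcal T'|_{\widehat Y'}$ are good tilting bundles on $\widehat Y,\widehat Y'$ with matching endomorphism algebras and $\widehat\phi_*\widehat T\simeq\widehat\phi'_*\widehat T'$. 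Since the $\Gm$-action on $X$ is good and lifts to $Y$ and $Y'$, Theorem \ref{Kal08 thm1.8} shows that $\widehat T$ and $\widehat T'$ are pulled back from $\Gm$-equivariant tilting bundles $T$ on $Y$ and $T'$ on $Y'$.

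It remains to verify that $(T,T')$ determines a lift of the original equivalence and to deduce the addendum. Goodness of $T,T'$ descends from that of $\widehat T,\widehat T'$: a trivial summand of $\widehat T$ corresponds to an idempotent of $\End_{\widehat Y}(\widehat T)\simeq\widehat{\End_Y(T)}$, which by Lemma \ref{Kal08 lem5.3} comes from an idempotent of the $\Gm$-equivariant $R$-algebra $\End_Y(T)$, cutting out a summand of $T$ that must be a trivial line bundle since it is so after completion. After arranging $T_0,T_0'$ to be $\Gm$-equivariant good tilting bundles --- which requires a rigidity argument together with Theorem \ref{Kal08 thm1.8} applied on $X_0$ --- a $\Gm$-equivariant bundle on $Y_0$ is recovered from its restriction to the completion along $\phi_0^{-1}(x)$ by Lemma \ref{Kal08 lem5.3}, so from $\widehat T|_{\widehat Y_0}\simeq T_0|_{\widehat Y_0}$ (and similarly for $Y'$) one gets $T|_{Y_0}\simeq T_0$ and $T'|_{Y'_0}\simeq T_0'$; as $T,T'$ are vector bundles, hence flat over $\Spec D$, they are genuine lifts of $T_0,T_0'$. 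Finally $\phi_*T$ and $\phi'_*T'$ are $\Gm$-equivariant $R$-modules whose $\mathfrak m$-adic completions are $\widehat\phi_*\widehat T$ and $\widehat\phi'_*\widehat T'$; these agree, both being the unique lift over $\widehat R$ of the rigid module $(\phi_0)_*T_0$, and the uniqueness makes the identification $\Gm$-equivariant, so Lemma \ref{Kal08 lem5.3} yields $\phi_*T\simeq\phi'_*T'$ as $R$-modules --- the addendum --- and, together with Lemma \ref{lem wemyss}, the $R$-algebra isomorphism $\End_Y(T)\simeq\End_R(\phi_*T)\simeq\End_R(\phi'_*T')\simeq\End_{Y'}(T')$, compatible over $X_0$. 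Proposition \ref{def-prop tilting-type} then gives the desired lifted tilting-type equivalence.

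The main obstacle is this last step: carrying out the $\Gm$-equivariant descent and, above all, checking that the bundles produced on $Y$ and $Y'$ restrict correctly to the central fibres, so that one genuinely obtains a lift in the sense of Definition \ref{def lift tilting-type} --- this forces one to reconcile an a priori non-equivariant central tilting bundle with the $\Gm$-equivariant picture and to verify that the descended endomorphism-algebra isomorphism is the one lifting the given isomorphism over $X_0$.
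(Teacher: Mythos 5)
Your overall route is the same as the paper's: complete the base and apply Theorem \ref{thm formal def}, pull the resulting bundles back to $\widehat{Y}=Y\times_X\Spec\widehat{R}$ at the $\Gm$-fixed point, invoke Theorem \ref{Kal08 thm1.8} to realize $\widehat{T},\widehat{T}'$ as pullbacks of $\Gm$-equivariant bundles $T,T'$, and then descend the isomorphisms $\widehat{\phi}_*\widehat{T}\simeq\widehat{\phi}'_*\widehat{T}'$ and $\End_{\widehat{Y}}(\widehat{T})\simeq\End_{\widehat{Y}'}(\widehat{T}')$ through Lemma \ref{Kal08 lem5.3}. But the step you yourself flag as ``the main obstacle'' is exactly where your argument has a genuine gap: to apply Lemma \ref{Kal08 lem5.3} you need the isomorphism $\widehat{\phi}_*\widehat{T}\simeq\widehat{\phi}'_*\widehat{T}'$ to be \emph{$\Gm$-equivariant} for the equivariant structures that Kaledin's Theorem \ref{Kal08 thm1.8} produces on each side, and your justification --- that both modules are ``the unique lift of the rigid module $(\phi_0)_*T_0$, and the uniqueness makes the identification $\Gm$-equivariant'' --- does not follow. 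Uniqueness of the lift is a statement about the underlying $\widehat{R}$-modules; the $\Gm$-equivariant structure is extra data, constructed by Kaledin through a choice (an isomorphism $g:\widehat{T}^{(1)}\xrightarrow{\sim}{F_{\xi}}_*\widehat{T}^{(1)}$ lifting the identity, whose ambiguity is a torsor under $\End_{\widehat{Y}}(\widehat{T})$), and a priori the two sides could carry incompatible equivariant structures even though the underlying modules agree.

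The paper resolves precisely this point in Steps 2 and 3 of its proof: it unpacks Kaledin's construction of the equivariant structure on $\widehat{M}=\widehat{\phi}_*\widehat{T}$ via the derivation $\xi$ and the induced endomorphism $\xi_{\widehat{M}}$, observes that the resulting structure depends only on the induced map $g_{\xi}$ on global sections, and then uses the algebra isomorphism $\End_{\widehat{Y}}(\widehat{T})\simeq\End_{\widehat{R}}(\widehat{M})\simeq\End_{\widehat{Y}'}(\widehat{T}')$ to choose $g$ and $g'$ inducing the \emph{same} $g_{\xi}$ on $\widehat{M}^{(1)}$, so that the two equivariant structures literally coincide under the module isomorphism. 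Some argument of this kind (or an independent proof that an equivariant isomorphism exists given a non-equivariant one) is required; without it your appeal to Lemma \ref{Kal08 lem5.3} to get $\phi_*T\simeq\phi'_*T'$ and $\End_Y(T)\simeq\End_{Y'}(T')$ is unsupported. The remaining points you raise (Cohen--Macaulayness of $\widehat{D}$, descent of goodness via idempotents, checking $T|_{Y_0}\simeq T_0$) are comparatively minor and can be handled along the lines you sketch.
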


\begin{rem} \rm
Again we remark that the assumption $\codim_{X_0} \Sing(X_0) \geq 3$ is essential for this theorem
(see Section \ref{sect Eg counter}).
\end{rem}

\begin{proof}[Proof of Theorem \ref{main thm equiv}]
In the following, we provide a proof of the theorem that consists of three steps.

\noindent
\textbf{Step 1. Construction of tilting bundles}.

Put $(D_n, d_n) := (D/d^{n+1}, d/d^{n+1})$, $\mathcal{D} := \lim D_n$,
$\mathcal{X} := X \otimes_D \mathcal{D}$, $\mathcal{Y} := Y \otimes_D \mathcal{D}$, and $\mathcal{Y}' := Y' \otimes_D \mathcal{D}$.
Let
\[ \varphi : \mathcal{Y} \to \mathcal{X} ~ \text{and} ~ \varphi' : \mathcal{Y}' \to \mathcal{X} \]
be the projection.
Then we can apply Theorem  \ref{thm formal def} to the diagram
\[ \begin{tikzcd}
\mathcal{Y} \arrow[r, "\varphi"] \arrow[rd] & \mathcal{X} \arrow[d] & \mathcal{Y'} \arrow[l, "\varphi'"'] \arrow[ld] \\
& \Spec \mathcal{D} &
\end{tikzcd} \]
and have a tilting bundle $\mathcal{T}$ (resp. $\mathcal{T}'$) on $\mathcal{Y}$ (resp. $\mathcal{Y'}$) such that
\[ \varphi_* \mathcal{T} \simeq \varphi'_* \mathcal{T}' \]
and
\[ \End_{\mathcal{Y}}(\mathcal{T}) \simeq \End_{\mathcal{Y}'}(\mathcal{T}'). \]
Let $x \in X$ be the unique fixed closed point and let $\widehat{X} := \Spec \widehat{R}$,
where $\widehat{R} := \widehat{\stsh_{X,x}}$ is the completion of $\stsh_{X,x}$ with respect to the unique maximal ideal.
Then the canonical morphism $\widehat{X} \to X$ factors through the morphism $\mathcal{X} \to X$
and hence we have a diagram
\[ \begin{tikzcd}
Y \times_X \widehat{X} \arrow[d, "\widehat{\phi}"] \arrow[r] & \mathcal{Y} \arrow[d, "\varphi"] \arrow[r] & Y \arrow[d, "\phi"] \\
\widehat{X} \arrow[r, "\iota"] \arrow[rr, bend right=20, "\kappa"'] & \mathcal{X} \arrow[r] & X
\end{tikzcd} \]
such that all squares are cartesian.
Thus the following Lemma \ref{lem loc tilt} implies that
the scheme $\widehat{Y} := Y \times_X \widehat{X}$ admits a tilting bundle $\widehat{T}$ such that $\widehat{\phi}_* \widehat{T} \simeq \iota^* \varphi_* \mathcal{T}$.
Similarly, the scheme $\widehat{Y}' := Y' \times_X \widehat{X}$ admits a tilting bundle $\widehat{T}'$ such that $\widehat{\phi}'_* \widehat{T}' \simeq \iota^* \varphi'_* \mathcal{T}'$.
In particular, we have
\[ \widehat{\phi}_* \widehat{T} \simeq \widehat{\phi}'_* \widehat{T}' \]
and
\[ \End_{\widehat{Y}}(\widehat{T}) \simeq \End_{\widehat{Y}'}(\widehat{T}'). \]

Then Theorem \ref{Kal08 thm1.8} implies that there is a $\Gm$-equivariant tilting bundle $T$ (resp. $T'$) on $Y$ (resp. $Y'$) such that the pull-back of $T$ (resp. $T'$) on $\widehat{Y}$ (resp. $\widehat{Y}'$) is isomorphic to $\widehat{T}$ (resp. $\widehat{T}'$).
In addition, Kaledin constructed $\Gm$-actions on $\widehat{\phi}_* \widehat{T}$ and $\widehat{\phi}'_* \widehat{T}'$
such that there are $\Gm$-equivariant isomorphisms
\[ \widehat{\phi}_* \widehat{T} \simeq \kappa^*\phi_*T ~ \text{and} ~ \widehat{\phi}'_* \widehat{T}' \simeq \kappa^*\phi'_*T'. \]
To show that the $\Gm$-equivariant structures of $\widehat{\phi}_* \widehat{T}$ and $\widehat{\phi}'_* \widehat{T}'$ are same under the
isomorphism $\widehat{\phi}_* \widehat{T} \simeq \widehat{\phi}'_* \widehat{T}'$,
we have to recall Kaledin's construction of $\Gm$-equivariant structures.
In the following, we provide the detail of his argument, because there is only a sketch of the proof  in \cite{Kal08}.
Put $\widehat{M} := \widehat{\phi}_* \widehat{T}$.

\vspace{2mm}

\noindent
\textbf{Step 2. $\Gm$-equivariant structure of $\widehat{M}$}.

Let us consider the $\Gm$-action $\Gm \to \Aut(\widehat{R})$.
It is well-known that we can regard $\Aut(\widehat{R})$ as an open subscheme of the Hilbert scheme $\Hilb(\Spec(\widehat{R} \otimes \widehat{R})$)
via the map taking the graph of an automorphism of $\Spec \widehat{R}$.
Thus the tangent space of $\Aut(\widehat{R})$ at the identity $\id \in \Aut(\widehat{R})$ is isomorphic to
\[ \Ext^1_{\widehat{X} \times \widehat{X}}(\stsh_{\Delta}, \stsh_{\Delta}) \simeq \HH^1(\widehat{R}) \simeq \Der_{\CC}(\widehat{R}, \widehat{R}), \]
where $\Delta \subset \widehat{X} \times \widehat{X}$ is the diagonal and $\HH^1(\hat{R})$ is the 1st Hochschild cohomology of $\widehat{R}$.
Thus, the $\Gm$-action $\Gm \to \Aut(\widehat{R})$ determines a (non-zero) derivation
\[ \xi : \widehat{R} \to \widehat{R} \]
uniquely up to scalar multiplication.
Let $\CC[\varepsilon]$ be the ring of dual numbers and put
\[ \widehat{R}^{(1)} := \widehat{R} \otimes_{\CC} \CC[\varepsilon], ~ \widehat{Y}^{(1)} := \widehat{Y} \times \Spec \CC[\varepsilon], 
~ \text{and} ~ \widehat{Y}'^{(1)} := \widehat{Y}' \times \Spec \CC[\varepsilon]. \]
By deformation theory, the derivation $\xi : \widehat{R} \to \widehat{R}$ determines an automorphism
\[ f_{\xi} : \widehat{R}^{(1)} \xrightarrow{\sim} \widehat{R}^{(1)} \]
of $\widehat{R}^{(1)}$ such that $f_{\xi} \otimes_{\CC[\varepsilon]} \CC \simeq \id_{\widehat{R}}$ (see \cite[Lemma 1.2.6]{Ser}), and then, by taking pull-back, we have isomorphisms
of schemes
\[ F_{\xi} : \widehat{Y}^{(1)} \xrightarrow{\sim} \widehat{Y}^{(1)} ~ \text{and} ~ F'_{\xi} : \widehat{Y}'^{(1)} \xrightarrow{\sim} \widehat{Y}'^{(1)}. \]
Via an isomorphism
\[ \Der_{\CC[\varepsilon]}(\widehat{R}^{(1)}, \widehat{R}) \simeq \Der_{\CC}(\widehat{R}, \widehat{R}), \]
we have a derivation $\tilde{\xi} : \widehat{R}^{(1)} \to \widehat{R}$ and then the automorphism is given by
\[ f_{\xi}(\tilde{r}) = \tilde{r} + \tilde{\xi}(\tilde{r}) \otimes \varepsilon. \]

Let $\widehat{T}^{(1)}$ be the pull-back of $\widehat{T}$ by the projection $\widehat{Y}^{(1)} \to \widehat{Y}$,
and we also define a bundle $\widehat{T}'^{(1)}$ on $\widehat{Y}'^{(1)}$ in the same way.
Then $\widehat{T}^{(1)}$ and ${F_{\xi}}_*\widehat{T}^{(1)}$ are two lifts of $\widehat{T}$ to $\widehat{Y}^{(1)}$.
Since $\widehat{T}$ is tilting, we have $\Ext_{\widehat{Y}}^1(\widehat{T}, \widehat{T}) = 0$ and hence there is an isomorphism
\[ g : \widehat{T}^{(1)} \xrightarrow{\sim} {F_{\xi}}_*\widehat{T}^{(1)} \]
(see \cite[Theorem 7.1 (c)]{Har2}).
If we put $\widehat{M}^{(1)} := H^0(\widehat{Y}^{(1)}, \widehat{T}^{(1)})$, then we have a $\CC$-linear bijective morphism
\[ g_{\xi} : \widehat{M}^{(1)} :=  H^0(\widehat{Y}^{(1)}, \widehat{T}^{(1)}) \xrightarrow[g]{\sim} H^0(\widehat{Y}^{(1)}, {F_{\xi}}_*\widehat{T}^{(1)}) \xrightarrow[\text{adj}]{\sim} H^0(\widehat{Y}^{(1)}, \widehat{T}^{(1)}) = \widehat{M}^{(1)} \]
of $\widehat{M}^{(1)}$.
Note that, by construction, we have $g_{\xi} \otimes_{\CC[\varepsilon]} \CC = \id_{\widehat{M}}$ and 
\[ g_{\xi}(\tilde{r}\tilde{m}) = f_{\xi}(\tilde{r})g_{\xi}(\tilde{m}) \]
for $\tilde{r} \in \widehat{R}^{(1)}$ and $\tilde{m} \in \widehat{M}^{(1)}$.
From the first equality, for any $\tilde{m} \in \widehat{M}^{(1)}$, there is an element $\tilde{\xi}_{\widehat{M}}(\tilde{m}) \in \widehat{M}$ such that
\[ g_{\xi}(\tilde{m}) = \tilde{m} + \tilde{\xi}_{\widehat{M}}(\tilde{m}) \otimes \varepsilon. \]
This correspondence $\tilde{m} \mapsto \tilde{\xi}_{\widehat{M}}(\tilde{m})$
defines a $\CC$-linear map
\[ \tilde{\xi}_{\widehat{M}} : \widehat{M}^{(1)} \to \widehat{M}. \]
In addition, since we have
\begin{align*}
\tilde{r}\tilde{m} + \tilde{\xi}_{\widehat{M}}(\tilde{r}\tilde{m}) \otimes \varepsilon &= g_{\xi}(\tilde{r}\tilde{m}) \\
&= f_{\xi}(\tilde{r})g_{\xi}(\tilde{m}) \\
&= (\tilde{r} + \tilde{\xi}(\tilde{r}) \otimes \varepsilon)(\tilde{m} + \tilde{\xi}_{\widehat{M}}(\tilde{m}) \otimes \varepsilon) \\
&= \tilde{r}\tilde{m} + (\tilde{r} \tilde{\xi}_{\widehat{M}}(\tilde{m}) + \tilde{\xi}(\tilde{r}) \tilde{m}) \otimes \varepsilon,
\end{align*}
we have an equality
\[ \tilde{\xi}_{\widehat{M}}(\tilde{r}\tilde{m}) = \tilde{r} \tilde{\xi}_{\widehat{M}}(\tilde{m}) + \tilde{\xi}(\tilde{r}) \tilde{m}. \]
Thus, restricting $\tilde{\xi}_{\widehat{M}}$ to the subgroup $\widehat{M} \simeq \widehat{M} \otimes_{\CC} \varepsilon \subset \widehat{M}^{(1)}$, 
we have a $\CC$-linear endomorphism
\[ \xi_{\widehat{M}} \in \End_{\CC}(\widehat{M}) \]
such that
\[ \xi_{\widehat{M}}(rm) = \xi(r)m + r \xi_{\widehat{M}}(m) \]
for $r \in \widehat{R}$ and $m \in \widehat{M}$.

Let $\widehat{R} \oplus \widehat{M}$ be the square-zero extension, whose multiplication is given by
\[ (r,m)(r',m') := (rr', rm' + r'm). \]
Then one can check that the map
\[ \xi : \widehat{R} \oplus \widehat{M} \ni (r, m) \mapsto (\xi(r), \xi_{\widehat{M}}(m)) \in \widehat{R} \oplus \widehat{M} \]
defines a derivation of $\widehat{R} \oplus \widehat{M}$.
Furthermore, by taking restriction, $\xi$ also defines a derivation of the algebra
\[ (\widehat{R} / \mathfrak{m}^n \widehat{R}) \oplus (\widehat{M} / \mathfrak{m}^n\widehat{M}) \]
for $n > 0$.
If $n = 1$, then the module $\widehat{M} / \mathfrak{m}\widehat{M}$ is finite dimensional, and thus there is a representation of $\Gm$
\[ \pi : \Gm \to \GL(\widehat{M} / \mathfrak{m}\widehat{M}) \]
whose differential is the restriction of $\xi_{\widehat{M}}$.
Using this representation, we have an action
\[ \Gm \to \Aut_{\CC}((\widehat{R} / \mathfrak{m} \widehat{R}) \oplus (\widehat{M} / \mathfrak{m}\widehat{M})) \]
of $\Gm$ on the algebra $(\widehat{R} / \mathfrak{m} \widehat{R}) \oplus (\widehat{M} / \mathfrak{m}\widehat{M})$ such that
\[ c \cdot (\bar{r}, \bar{m}) := (\bar{r}, \pi(c)\bar{m}) \]
for $c \in \Gm$ and $(\bar{r}, \bar{m}) \in (\widehat{R} / \mathfrak{m} \widehat{R}) \oplus (\widehat{M} / \mathfrak{m}\widehat{M})$.

Then by using \cite[Lemma 5.2]{Kal08} inductively on $n$, we have an action of $\Gm$ on $ \widehat{R} \oplus \widehat{M}$ and
therefore we have a $\Gm$-equivariant structure of $\widehat{M}$.

\vspace{3mm}

\noindent
\textbf{Step 3. Comparing $\Gm$-equivariant structures}.

From the construction above, we notice that the $\Gm$-equivariant structure of $\widehat{M}$ depends on the choice of the isomorphism
\[ g : \widehat{T}^{(1)} \xrightarrow{\sim} {F_{\xi}}_*\widehat{T}^{(1)} \]
such that $g \otimes_{\CC[\varepsilon]} \CC = \id$, and the ambiguity of the choice of such isomorphisms is resolved by the group $\End_{\widehat{Y}}(\widehat{T})$ (see \cite[Theorem 7.1 (a)]{Har2}).
Since we have
\[ \End_{\widehat{Y}}(\widehat{T}) \simeq \End_{\widehat{R}}(\widehat{M}) \simeq \End_{\widehat{Y}'}(\widehat{T}'), \]
we can choose isomorphisms
\[ g : \widehat{T}^{(1)} \xrightarrow{\sim} {F_{\xi}}_*\widehat{T}^{(1)} ~ \text{and} ~ g' : \widehat{T}'^{(1)} \xrightarrow{\sim} {F'_{\xi}}_*\widehat{T}'^{(1)} \]
such that they give the same $\CC$-linear map
\[ g_{\xi} : \widehat{M}^{(1)} \to \widehat{M}^{(1)} \]
after taking the global sections.
This means that we can take $\Gm$-equivariant tilting bundles $T$ and $T'$ such that there are $\Gm$-equivariant isomorphisms
\[ \widehat{\phi}_* \widehat{T} \simeq \kappa^*\phi_*T \simeq \kappa^*\phi'_*T' \simeq \widehat{\phi}'_* \widehat{T}', \]
as desired.

By Lemma \ref{Kal08 lem5.3}, the $\Gm$-equivariant isomorphism  $\widehat{\phi}_* \widehat{T} \simeq \widehat{\phi}'_* \widehat{T}'$ implies that we have
\[ \phi_* T \simeq \phi'_*T \]
and the isomorphism $\End_{\widehat{Y}}(\widehat{T}) \simeq \End_{\widehat{Y}'}(\widehat{T}')$ implies that we have
\[ \End_Y(T) \simeq \End_{Y'}(T'). \]
Therefore we have the result.
\end{proof}

\begin{rem} \rm
We also have an equivalence between $\Gm$-equivariant derived categories of $Y$ and $Y'$ (see, for example, \cite[Section 4.2]{Kar15}).
\end{rem}

\begin{lem} \label{lem loc tilt}
Let $\psi : Z \to \Spec S$ be a projective morphism, $\mathfrak{p} \in  \Spec S$ a point, $\mathcal{Z} := Z \times_{\Spec S} \Spec \widehat{S_{\mathfrak{p}}}$,
and $\iota : \mathcal{Z} \to Z$ the canonical morphism.
If $T$ is a tilting bundle on $Z$, then $\iota^*T$ is a tilting bundle on $\mathcal{Z}$.
\end{lem}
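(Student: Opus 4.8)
The plan is to check the three conditions of Definition \ref{def tilting bundle} for $\iota^{*}T$ one at a time, in each case reducing to the corresponding property of $T$ by flat base change. First I would record the two basic features of the morphism $\iota$ that make everything go through. The structure morphism $g : \Spec \widehat{S_{\mathfrak{p}}} \to \Spec S$ is flat, being the composite of the localization $S \to S_{\mathfrak{p}}$ and the $\mathfrak{p}$-adic completion $S_{\mathfrak{p}} \to \widehat{S_{\mathfrak{p}}}$, and it is affine; hence its base change $\iota : \mathcal{Z} \to Z$ is flat and affine. In particular $\iota^{*}$ is exact (so $\iota^{*}T$ is again a vector bundle and $L\iota^{*} = \iota^{*}$), and $\iota_{*}$ is exact on quasi-coherent sheaves (so $R\iota_{*} = \iota_{*}$, and $\iota_{*}$ carries $\mathrm{D}^{-}(\Qcoh(\mathcal{Z}))$ into $\mathrm{D}^{-}(\Qcoh(Z))$).

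For the partial tilting condition, since $\iota^{*}T$ is locally free we have $\Ext^{p}_{\mathcal{Z}}(\iota^{*}T, \iota^{*}T) \simeq H^{p}(\mathcal{Z}, \iota^{*}(T^{\vee}\otimes T))$. Applying flat base change for the cohomology of the proper morphism $\psi$ along the flat morphism $g$ gives a natural isomorphism
\[ \RGamma(\mathcal{Z}, \iota^{*}(T^{\vee}\otimes T)) \simeq \RGamma(Z, T^{\vee}\otimes T) \otimes_{S} \widehat{S_{\mathfrak{p}}}, \]
and since $\widehat{S_{\mathfrak{p}}}$ is flat over $S$ this yields $\Ext^{p}_{\mathcal{Z}}(\iota^{*}T, \iota^{*}T) \simeq \Ext^{p}_{Z}(T, T) \otimes_{S}\widehat{S_{\mathfrak{p}}}$, which vanishes for $p > 0$ because $T$ is partial tilting.

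For the generator condition, let $F \in \mathrm{D}^{-}(\Qcoh(\mathcal{Z}))$ satisfy $\RHom_{\mathcal{Z}}(\iota^{*}T, F) = 0$. The derived adjunction $L\iota^{*} \dashv R\iota_{*}$, combined with $L\iota^{*}=\iota^{*}$ and $R\iota_{*}=\iota_{*}$, gives $\RHom_{Z}(T, \iota_{*}F) \simeq \RHom_{\mathcal{Z}}(\iota^{*}T, F) = 0$; as $\iota_{*}F \in \mathrm{D}^{-}(\Qcoh(Z))$ and $T$ generates $\mathrm{D}^{-}(\Qcoh(Z))$, we conclude $\iota_{*}F = 0$, i.e. $\iota_{*}H^{i}(F) = 0$ for every $i$. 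Over any affine open $\Spec B \subset Z$ the functor $\iota_{*}$ is restriction of scalars along $B \to B \otimes_{S} \widehat{S_{\mathfrak{p}}}$, which is faithful, so each $H^{i}(F)$ vanishes and $F = 0$. Hence $\iota^{*}T$ generates $\mathrm{D}^{-}(\Qcoh(\mathcal{Z}))$ and is a tilting bundle.

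The only step requiring a moment's care is the generator condition: the idea is to use that $\iota$ is \emph{affine}, so that $\iota_{*}$ is exact and faithful and the generation question descends losslessly to $Z$; trying instead to argue via classical generation of $\Perf(\mathcal{Z})$ would require knowing that the essential image of $\iota^{*}$ thickens to all of $\Perf(\mathcal{Z})$, which is less transparent. Everything else is a formal consequence of the flatness of $\widehat{S_{\mathfrak{p}}}$ over $S$ together with proper flat base change.
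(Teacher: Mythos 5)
Your proof is correct and follows essentially the same route as the paper: flat base change along the flat affine morphism $\Spec \widehat{S_{\mathfrak{p}}} \to \Spec S$ for the vanishing of $\Ext^{p}$, and the adjunction $\iota^{*} \dashv \iota_{*}$ together with exactness and faithfulness of $\iota_{*}$ (as $\iota$ is affine) for the generator condition. The only difference is that you spell out a few steps the paper leaves implicit, such as why $\iota_{*}F = 0$ forces $F = 0$.
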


\begin{proof}
Since $\Spec \widehat{S_{\mathfrak{p}}} \to \Spec S$ is flat and $\Ext^i_Z(T,T) \simeq R^i\psi_*(T^{\vee} \otimes T)$, 
it follows from the flat base change formula that $\iota^*T$ is a partial tilting bundle.

Next we show that $\iota^*T$ is a generator of $\mathrm{D}^-(\Qcoh(\mathcal{Z}))$.
Let $F \in \mathrm{D}^-(\Qcoh(\mathcal{Z}))$ be a complex such that $\RHom_{\mathcal{Z}}(\iota^*T, F) = 0$.
Since $\iota$ is quasi-compact and quasi-separated, we can define a functor $\iota_* : \Qcoh(\mathcal{Z}) \to \Qcoh(Z)$.
In addition, since the morphism $\iota$ is affine, the functor $\iota_*$ is exact and hence we can consider a functor between derived categories
\[ \iota_* : \mathrm{D}^-(\Qcoh(\mathcal{Z})) \to \mathrm{D}^-(\Qcoh(Z)), \]
which is the right adjoint of $\iota^*$.
Thus we have an isomorphism
\[ \RHom_{Z}(T, \iota_*F) \simeq \RHom_{\mathcal{Z}}(\iota^*T, F) = 0, \]
and this shows that $\iota_*F = 0$.
Since $\iota$ is affine, $\iota_*F = 0$ implies $F = 0$.
\end{proof}

As in Corollary \ref{cor thm form defo}, we can relax the assumption in Theorem \ref{main thm equiv} if the codimension of the exceptional locus is greater than or equal to three.

\begin{cor}
Under the same set-up of Theorem \ref{main thm equiv}, assume in addition that
\[ \codim_Y(\exc(\phi_0)) \geq 3. \]
Then any strict tilting-type equivalence for $Y_0$ and $Y'_0$ lifts to a tilting type equivalence for $Y$ and $Y'$.
\end{cor}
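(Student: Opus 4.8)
The plan is to repeat the proof of Theorem~\ref{main thm equiv} essentially word for word, the sole modification being that in its first step one invokes Corollary~\ref{cor thm form defo} in place of Theorem~\ref{thm formal def}. The point is that goodness enters the proof of Theorem~\ref{main thm equiv} only through the ``good and strict'' hypothesis of Theorem~\ref{thm formal def} that is applied over the formal base, and the extra assumption on $\codim_Y(\exc(\phi_0))$ is exactly what is needed to weaken that hypothesis to ``strict''.

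In more detail, keep the notation $(D_n, d_n) := (D/d^{n+1}, d/d^{n+1})$, $\mathcal{D} := \lim D_n$, and $\mathcal{X} := X \otimes_D \mathcal{D}$, $\mathcal{Y} := Y \otimes_D \mathcal{D}$, $\mathcal{Y}' := Y' \otimes_D \mathcal{D}$, with induced projections $\varphi, \varphi'$, as in the proof of Theorem~\ref{main thm equiv}. Given a strict tilting-type equivalence between $\D(Y_0)$ and $\D(Y'_0)$ determined by bundles $(T_0, T_0')$ with $T_0|_{U_0} \simeq T_0'|_{U_0}$, and which we do \emph{not} assume to be good, first apply Corollary~\ref{cor thm form defo} to the formal diagram over $\Spec \mathcal{D}$; its hypotheses are verified exactly as in Step~1 of the proof of Theorem~\ref{main thm equiv}, now using the additional assumption on the codimension of the exceptional locus. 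This produces tilting bundles $\mathcal{T}, \mathcal{T}'$ on $\mathcal{Y}, \mathcal{Y}'$ lifting $T_0, T_0'$ and satisfying
\[ \End_{\mathcal{Y}}(\mathcal{T}) \simeq \End_{\mathcal{Y}'}(\mathcal{T}') \quad \text{and} \quad \varphi_* \mathcal{T} \simeq \varphi'_* \mathcal{T}', \]
the second isomorphism being read off from the proof of Theorem~\ref{thm formal def} (the relevant push-forwards are reflexive since all exceptional loci have codimension at least two, so goodness is not needed for it). From this point the argument is verbatim Steps~1--3 of the proof of Theorem~\ref{main thm equiv}: base-change along $\iota : \widehat{X} \to \mathcal{X}$ and apply Lemma~\ref{lem loc tilt} to obtain tilting bundles $\widehat{T}, \widehat{T}'$ on $\widehat{Y}, \widehat{Y}'$ with $\widehat{\phi}_* \widehat{T} \simeq \widehat{\phi}'_* \widehat{T}'$ and $\End_{\widehat{Y}}(\widehat{T}) \simeq \End_{\widehat{Y}'}(\widehat{T}')$; descend via Kaledin's Theorem~\ref{Kal08 thm1.8} to $\Gm$-equivariant tilting bundles $T, T'$ on $Y, Y'$; and carry out Kaledin's construction and comparison of $\Gm$-equivariant structures on $\widehat{\phi}_* \widehat{T}$ and $\widehat{\phi}'_* \widehat{T}'$ (Steps~2 and~3) to conclude that the induced isomorphism $\End_Y(T) \simeq \End_{Y'}(T')$ restricts over $X_0$ to the original one, so that the resulting tilting-type equivalence between $\D(Y)$ and $\D(Y')$ is indeed a lift.

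The one thing that genuinely needs to be checked is that Corollary~\ref{cor thm form defo}, and not only Theorem~\ref{thm formal def}, still feeds the $\Gm$-equivariant machinery of Steps~2--3 with everything it used in the good case, namely the algebra isomorphism together with $\varphi_* \mathcal{T} \simeq \varphi'_* \mathcal{T}'$. Since Corollary~\ref{cor thm form defo} is a direct consequence of the \emph{proof} of Theorem~\ref{thm formal def}, and that proof establishes both facts using only that $(\phi_0)_* T_0$ is reflexive and rigid --- which holds for small $\phi_0$ by the small-resolution case of the rigidity corollary in Section~\ref{sect complete local} --- rather than that $T_0$ is good, there is nothing further to verify. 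This is therefore the main, and essentially the only, obstacle, and it is already taken care of by the earlier results; note also that, just as for Corollary~\ref{cor thm form defo} compared with Theorem~\ref{thm formal def}, the conclusion here is a tilting-type equivalence that need not be good.
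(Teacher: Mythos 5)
Your proposal is correct and follows exactly the route the paper intends: the paper gives no separate argument for this corollary beyond the remark that it follows ``as in Corollary~\ref{cor thm form defo},'' i.e.\ by rerunning the proof of Theorem~\ref{main thm equiv} with Corollary~\ref{cor thm form defo} substituted for Theorem~\ref{thm formal def} in Step~1. Your observation that goodness was only needed for reflexivity and rigidity of $(\phi_0)_*T_0$, both of which the paper establishes for small resolutions without goodness, is precisely the justification the paper relies on.
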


Let $p : Y \to \Spec D$, $p' : Y' \to \Spec D$ and $q : X \to \Spec D$ be the morphisms associated to the deformation.

\begin{cor} \label{cor equiv defo fiber}
Under the same assumption as in Theorem \ref{main thm equiv}, assume that there exists a good and strict tilting-type equivalence between $\D(Y_0)$ and $\D(Y'_0)$.
Then there is a good tilting-type equivalence between $\D(p^{-1}(t))$ and $\D(p'^{-1}(t))$ for any closed point $t \in \Spec D$.
\end{cor}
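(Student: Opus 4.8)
The plan is to reduce the statement to Theorem \ref{main thm equiv} by restricting the tilting bundles it produces to the fiber over $t$.

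First I would apply Theorem \ref{main thm equiv} to the given deformation: this yields $\Gm$-equivariant tilting bundles $T$ on $Y$ and $T'$ on $Y'$ which determine a lift of the given good and strict tilting-type equivalence, together with an $R$-algebra isomorphism $\End_Y(T)\simeq\End_{Y'}(T')$ (and $\phi_*T\simeq\phi'_*T'$, though this last point is not strictly needed). Since we work over $\CC$ and all schemes are of finite type, a closed point $t\in\Spec D$ has residue field $\CC$; write $Y_t:=p^{-1}(t)$, $Y'_t:=p'^{-1}(t)$, and $X_t:=q^{-1}(t)=\Spec R_t$ with $R_t:=R\otimes_D\CC$, and set $T_t:=T|_{Y_t}$, $T'_t:=T'|_{Y'_t}$. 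These are vector bundles, and they are good because a trivial direct summand of $T$ restricts to a trivial direct summand of $T_t$.

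The substantive step is to show that $T_t$ and $T'_t$ are again tilting bundles. For the partial tilting condition I would argue as follows. The sheaf $T^\vee\otimes T$ is locally free on $Y$, hence flat over $D$ (as $Y\to\Spec D$ is flat); since $T$ is tilting and $X$ is affine we get $R^i\phi_*(T^\vee\otimes T)=0$ for $i>0$ and $\phi_*(T^\vee\otimes T)=\End_Y(T)=:\Lambda$, so $\Lambda$ is flat over $D$ as well. Because $Y$ and $X$ are flat over $\Spec D$, cohomology and base change applies to the projective morphism $\phi$ along $\{t\}\hookrightarrow\Spec D$ and gives $R\phi_{t*}(T_t^\vee\otimes T_t)\simeq\Lambda\otimes_D\CC=:\Lambda_t$, concentrated in degree $0$; since $X_t$ is affine this yields $\Ext^i_{Y_t}(T_t,T_t)=0$ for $i>0$ and $\End_{Y_t}(T_t)\simeq\Lambda_t$, and similarly $\End_{Y'_t}(T'_t)\simeq\Lambda'_t:=\End_{Y'}(T')\otimes_D\CC$. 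It then remains to see that $T_t$ generates $\mathrm{D}^-(\Qcoh(Y_t))$. Here I would use that, by Proposition \ref{tilting equiv}, $\RHom_Y(T,-)$ is an $R$-linear (hence $D$-linear) equivalence $\D(Y)\xrightarrow{\sim}\D(\Lambda)$, and that $T$ and $\Lambda$ are flat over $D$; base-changing this equivalence along $D\to\CC$, which is legitimate by the flatness just noted, produces an equivalence $\RHom_{Y_t}(T_t,-):\D(Y_t)\xrightarrow{\sim}\D(\Lambda_t)$, and an object admitting such an equivalence is in particular a generator. Alternatively, one can note that the fiberwise tilting locus in $\Spec D$ is open and contains $d$, where $T_d$ is, by the definition of a lift, the tilting bundle of the original equivalence; since the $\Gm$-action is good, any closed point $t$ can be flowed into that locus, and $\Gm$-equivariance of $T$ transports the tilting property back to $Y_t$.

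Finally, tensoring the isomorphism $\End_Y(T)\simeq\End_{Y'}(T')$ with $\CC$ over $D$ gives an $R_t$-algebra isomorphism $\Lambda_t\simeq\Lambda'_t$, so Proposition \ref{tilting equiv} produces
\[ \D(Y_t)\simeq\D(\Lambda_t)\simeq\D(\Lambda'_t)\simeq\D(Y'_t), \]
which is a good tilting-type equivalence determined by $(T_t,T'_t)$. I expect the main obstacle to be the generation step: the partial tilting condition and the comparison of endomorphism algebras are routine once the flatness over $D$ is in place, but descending the equivalence $\D(Y)\simeq\D(\Lambda)$ to the fiber (or, in the alternative route, establishing openness of the tilting locus and invoking the $\Gm$-flow) is where the real work lies.
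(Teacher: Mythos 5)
Your overall strategy is exactly the paper's: apply Theorem \ref{main thm equiv} to obtain tilting bundles $T$, $T'$ with $\End_Y(T)\simeq\End_{Y'}(T')$, restrict them to the fibres over $t$, use derived base change along $\{t\}\hookrightarrow\Spec D$ to get the partial-tilting property and the identification of the fibrewise endomorphism algebras with $\End_Y(T)\otimes_D\CC$, and conclude via Proposition \ref{tilting equiv}. The goodness of $T_t$, the flatness of $\Lambda=\End_Y(T)$ over $D$ (which indeed drops out of the same base-change isomorphism, since $R\phi_{t*}(T_t^{\vee}\otimes T_t)$ has no cohomology in negative degrees, so the negative Tor terms of $\Lambda$ against $\CC$ vanish), and the final comparison of algebras all match the paper's argument.

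The one weak point is the generation step, and neither of your two routes is sound as written. ``Base-changing the equivalence $\D(Y)\simeq\D(\Lambda)$ along $D\to\CC$'' is not a formal operation: the existence of an induced equivalence $\D(p^{-1}(t))\simeq\D(\Lambda\otimes_D\CC)$ is essentially equivalent to $T_t$ being tilting, so without invoking genuine dg-categorical base-change machinery this is circular. The $\Gm$-flow alternative is not available in the stated generality either, since Theorem \ref{main thm equiv} does not assume that the $\Gm$-action descends to $\Spec D$ or that $p$ is equivariant over an action on the base, so distinct closed fibres need not be related by the flow. Fortunately generation has a one-line proof, which is presumably why the paper dismisses it as clear: for $F\in\mathrm{D}^-(\Qcoh(p^{-1}(t)))$ and $i:p^{-1}(t)\hookrightarrow Y$ the closed immersion, adjunction gives $\RHom_{p^{-1}(t)}(T_t,F)\simeq\RHom_Y(T,i_*F)$ (using that $T$ is locally free, so $Li^*T=T_t$, and that $i_*$ is exact); since $T$ generates and $i_*$ is conservative, $F=0$. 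This is the same mechanism as in Lemma \ref{lem loc tilt}, with the affine morphism there replaced by a closed immersion. With that substitution your proof is complete and coincides with the paper's.
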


\begin{proof}
Let $T_0$ and $T'_0$ are tilting bundle that determines the good and strict tilting-type equivalence between $\D(Y_0)$ and $\D(Y'_0)$.
Then by Theorem \ref{main thm equiv} we have a tilting bundle $T$ (resp. $T'$) on $Y$ (resp. $Y'$) such that
\[ \End_Y(T) \simeq \End_{Y'}(T'). \]
Let us consider the following diagram.
\[ \begin{tikzcd}
p^{-1}(t) \arrow[d, hook] \arrow[r, "\phi"] & q^{-1}(t) \arrow[d, hook] \arrow[r] & \Spec \CC \arrow[d, "t"] \\
Y \arrow[r, "\phi"] \arrow[rr, "p"', bend right=20]& X \arrow[r, "q"] & \Spec D.
\end{tikzcd} \]
Since we have an isomorphism
\begin{align*}
(R\phi_*F)|_{q^{-1}(t)} \simeq \RGamma(p^{-1}(t), F|_{p^{-1}(t)})
\end{align*}
for an vector bundle $F$ on $Y$ applying the (derived) flat base change formula, we have that $T_t := T|_{p^{-1}(t)}$ is a good partial tilting bundle on $p^{-1}(t)$.
In addition it is clear that $T_t$ is a generator and hence $T_t$ is a good tilting bundle on $p^{-1}(t)$.

Similarly, a bundle $T'_t := T'|_{p'^{-1}(t)}$ is a tilting bundle on $p'^{-1}(t)$ such that
\[ \End_{p^{-1}(t)}(T_t) \simeq \End_{p'^{-1}(t)}(T'_t). \]
This shows the result.
Note that we also have $\phi_*(T_t) \simeq \phi'_*(T'_t)$.
\end{proof}

%%%%%%%%%%%%%%%%%%%%%%%%%%%%%%%%%%%%%%%%%%%%%%%%%%%%%%%%%%%%%%%%%%%%%%%%%
\section{Examples} \label{sect example}

The aim of this section is to provide some applications and counter-examples of the theorem we established in the sections above.

\subsection{Stratified Mukai flops and stratified Atiyah flops} \label{subsect; stratified flop}

Let $V = \CC^N$ be a $N$-dimensional vector space, $r$ an integer such that $1 \leq r \leq N-1$, and $\Gr(r, N)$ the Grassmannian of $r$-dimensional linear subspaces of $V$.
%In the following, we assume $2r \leq N$.
For each $r$ such that $2r \leq N$, we consider the following three varieties
\begin{align*}
Y_0 &:= \{ (L, A) \in \Gr(r, N) \times \End(V) \mid A(V) \subset L, A(L) = 0 \}, \\
Y_0' &:= \{ (L', A') \in \Gr(N - r, N) \times \End(V) \mid A'(V) \subset L', A'(L') = 0 \}, \\
X_0 &:= \overline{B(r)} := \{ A \in \End(V) \mid  A^2 =0, \dim \Ker A \geq N - r \}.
\end{align*}
The variety $Y_0$ has two projections $\phi_0 : Y_0 \to X$ and $\pi_0 : Y_0 \to \Gr(r, N)$.
Via the second projection $\pi_0$, we can identify $Y_0$ with the total space of the cotangent bundle on the Grassmannian $\Gr(r, N)$.

Similarly, $Y'_0$ has two projections $\phi'_0 : Y_0 \to X$ and $\pi'_0 : Y_0 \to \Gr(N - r, N)$, 
and the second projection arrows us to identify $Y'_0$ with the total space of the cotangent bundle on the Grassmannian $\Gr(N-r, N)$.

The affine variety $X_0$ is called a nilpotent orbit closure of type A.
The singularity of $X_0$ is known to be symplectic, and hence to be normal, Gorenstein, and canonical.

It is easy to see that two morphisms $\phi_0 : Y_0 \to X_0$ and $\phi_0' : Y'_0 \to X_0$ give resolutions of $X_0$.
Since $Y_0$ and $Y'_0$ are algebraic symplectic varieties, these two resolutions are crepant resolutions.

If $N \geq 3$ and $2r < N$, the diagram
\[ \begin{tikzcd}
Y_0 \arrow[dr, "\phi_0"] & & Y'_0 \arrow[ld, "\phi'_0"'] \\
& X_0 &
\end{tikzcd} \]
is a flop and this flop is called a \textit{stratified Mukai flop} on $\Gr(r,N)$.

Note that $Y_0$, $Y_0'$, and $X_0$ have natural $\Gm$-actions and $\phi_0$ and $\phi'_0$ are $\Gm$-equivariant.

These three varieties $Y_0$, $Y'_0$ and $X_0$ have natural one-parameter $\Gm$-equivariant deformations as follows.
\begin{align*}
Y &:= \{ (L, A, t) \in \Gr(r,N) \times \End(V) \times \CC \mid (A-t \cdot \id)(V) \subset L, (A + t \cdot \id)(L) = 0 \}, \\ 
Y' &:= \{ (L', A', t') \in \Gr(N - r,N) \times \End(V) \times \CC \mid (A' - t' \cdot \id)(V) \subset L', (A' + t' \cdot \id)(L') = 0 \}, \\
X &:=  \{ (A, t) \in \End(V) \times \CC \mid A^2 = t^2 \cdot \id, \dim \Ker (A - t \cdot \id) \geq N - r \}.
\end{align*}
Note that the variety $Y$ is isomorphic to the total space of a bundle $\widetilde{\Omega}_{\Gr(r,N)}$ on $\Gr(r, N)$ that lies on an exact sequence
\[ 0 \to \Omega_{\Gr(r, N)} \to \widetilde{\Omega}_{\Gr(r,N)} \to \stsh_{\Gr(r,N)} \to 0 \]
that gives a generator of
\[ H^1(\Gr(r, N), \Omega_{\Gr(r,N)}) = \CC. \]
The corresponding statement holds for $Y'$.
Put
\[ \phi : Y \ni (L,A,t) \mapsto (A, t) \in X ~ \text{and} ~  \phi' : Y' \ni (L',A',t') \mapsto (A', -t') \in X. \]
Then the morphisms $\phi : Y \to X$ and $\phi' : Y' \to X$ give two crepant resolutions of $X$, and the diagram
\[ \begin{tikzcd}
Y \arrow[dr, "\phi"] & & Y' \arrow[ld, "\phi'"'] \\
& X &
\end{tikzcd} \]
is a flop called a \textit{stratified Atiyah flop} on $\Gr(r,N)$.

In \cite{CKL10, CKL13}, Cautis, Kamnitzer, and Licata proved that there are derived equivalences for stratified Mukai flops $\D(Y_0) \simeq \D(Y'_0)$
(later we refer to this equivalence as CKL's equivalence).
Their equivalence was given as a Fourier-Mukai transform and was obtained as a corollary of their framework of \textit{categorical $\mathfrak{sl}_2$ action}.
In \cite{Cau12}, Cautis studied an explicit description of the Fourier-Mukai kernel, and as an application of it,
he show that CKL's equivalence for a stratified Mukai flop extends to an equivalence for a stratified Atiyah flop.

In the following, we add some results for derived equivalences for stratified Mukai flops and stratified Atiyah flops from the point of view of tilting bundles.

\begin{thm} \label{tilting SMF}
Assume that $2r < N$.
Then there exists a strict tilting-type equivalence for a stratified Mukai flop on $\Gr(r,N)$.
\end{thm}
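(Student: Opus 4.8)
The plan is to produce explicit tilting bundles $T_0$ on $Y_0 = T^*\Gr(r,N)$ and $T_0'$ on $Y_0' = T^*\Gr(N-r,N)$ that agree on the common open locus $U_0 = (X_0)_{\mathrm{sm}}$, and then invoke Lemma \ref{lem tilting crep canoni} to conclude that the induced tilting-type equivalence is strict. The natural candidates come from the well-known tilting bundle on the Grassmannian itself: the Kapranov-type bundle built from the (duals of) Schur functors $\Sigma^\lambda \mathcal{S}$ of the tautological subbundle $\mathcal{S}$ on $\Gr(r,N)$, with $\lambda$ ranging over Young diagrams inside an $r\times(N-r)$ box. Pulling this collection back along the cotangent projection $\pi_0 : Y_0 \to \Gr(r,N)$ and taking the direct sum gives a candidate $T_0$; since $\pi_0$ is an affine morphism (the total space of a vector bundle), the partial-tilting (Ext-vanishing) property can be checked downstairs on $\Gr(r,N)$ after decomposing $(\pi_0)_*\mathcal{O}_{Y_0}$ into its Schur summands $\bigoplus_k \operatorname{Sym}^k \Omega_{\Gr(r,N)}$, and the generation property follows because the pullback of a generating collection generates. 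The analogous construction on $\Gr(N-r,N)$, using the tautological subbundle there, produces $T_0'$.

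The crucial point — and the main obstacle — is to verify that $T_0$ and $T_0'$ restrict to the \emph{same} bundle on the smooth locus $U_0$. Geometrically, $U_0$ is a single (dense) symplectic leaf: the minimal nilpotent-orbit-type stratum where $A$ has rank exactly $r$ and $\operatorname{Ker} A$ has dimension exactly $N-r$. Over $U_0$, both resolutions are isomorphisms, and $U_0$ carries two a priori different tautological bundles — $L = \operatorname{Im} A \subset V$ coming from $\Gr(r,N)$, and $L' = \operatorname{Ker} A \subset V$ coming from $\Gr(N-r,N)$. The key identity to establish is that on $U_0$ one has a canonical isomorphism $L \cong (V/L')^{\vee}$ (or, using the symplectic/trace form, $L^{\vee}\otimes(\text{line}) \cong V/L'$), so that Schur functors of one are, up to a twist by a power of the determinant, Schur functors of the other — and then one arranges the two collections of $\lambda$'s (both indexed by the $r\times(N-r)$ box, which is symmetric under transpose) so that the twists match. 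Making this dictionary precise, and checking that it genuinely identifies $T_0|_{U_0}$ with $T_0'|_{U_0}$ as bundles (not merely summand-by-summand up to reordering), is where the real work lies; this is presumably the computation that the hypothesis $2r < N$ is needed for, so that the flop is a genuine (non-trivial) flop and $X_0$ is normal Gorenstein of the right type for Lemma \ref{lem tilting crep canoni} to apply.

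Assuming that identification, the rest is formal: $X_0$ is a normal Gorenstein affine variety (a type-A nilpotent orbit closure, hence symplectic, hence has rational Gorenstein canonical — in fact symplectic — singularities) of dimension $\geq 2$, and $\phi_0 : Y_0 \to X_0$, $\phi_0' : Y_0' \to X_0$ are crepant resolutions with common open set $U_0 = (X_0)_{\mathrm{sm}}$. By Lemma \ref{lem tilting crep canoni}, the condition $T_0|_{U_0}\simeq T_0'|_{U_0}$ gives an $R$-algebra isomorphism $\End_{Y_0}(T_0)\simeq \End_{Y_0'}(T_0')$ through the common restriction $H^0(U_0, (T_0^{\vee}\otimes T_0)|_{U_0})$, hence a strict tilting-type equivalence $\D(Y_0)\simeq\D(\End_{Y_0}(T_0))\simeq\D(\End_{Y_0'}(T_0'))\simeq\D(Y_0')$. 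Since the Kapranov-type collections contain $\mathcal{O}$ as a summand (the empty Young diagram), this equivalence is moreover good, though that is not needed for the statement. I would present the argument in exactly this order: (1) recall/construct the Grassmannian tilting bundle and pull back; (2) check partial-tilting and generation via the affine morphism $\pi_0$; (3) set up the $L \leftrightarrow V/L'$ dictionary on $U_0$ and match the two collections; (4) apply Lemma \ref{lem tilting crep canoni}.
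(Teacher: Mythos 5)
Your proposal takes a completely different route from the paper, and it contains a genuine gap rather than merely deferred routine work. The paper does not construct any explicit tilting bundle on $T^*\Gr(r,N)$. Instead it observes that $X_0=\overline{B(r)}$ is an affine symplectic variety and that $\phi_0,\phi_0'$ are symplectic resolutions, invokes Kaledin's abstract existence theorem (Theorem \ref{Kal08 thm1.6}) to obtain tilting bundles over the completion with matching pushforwards and isomorphic endomorphism algebras, and then uses Theorem \ref{Kal equiv} to descend them to $Y_0$ and $Y_0'$. The entire content of the paper's proof is therefore the one-line verification that the scaling action $t\cdot A=t^{-1}A$ on $\overline{B(r)}\subset\End(V)$ is a \emph{good} $\Gm$-action: the origin is the unique fixed point and $\Gm$ acts on $\mathfrak{m}_o$ with strictly positive weights.

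The gap in your approach is step (2). Affineness of $\pi_0$ reduces partial tilting of the pulled-back Kapranov collection to the vanishing of
$H^{>0}\bigl(\Gr(r,N),\,\Sigma^\lambda\mathcal{S}^\vee\otimes\Sigma^\mu\mathcal{S}\otimes\Sym^k\Theta_{\Gr(r,N)}\bigr)$ for all $k\geq0$
(note that $(\pi_0)_*\stsh_{Y_0}\simeq\bigoplus_k\Sym^k\Theta_{\Gr(r,N)}$, not $\bigoplus_k\Sym^k\Omega_{\Gr(r,N)}$ as you wrote), and this vanishing is exactly the hard point: it holds for $r=1$, which is why the Mukai-flop case admits the explicit treatment of \cite{H17a}, but it is not known — and is not expected to hold without modifying the collection — for $r\geq2$. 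This is precisely why the paper routes through Kaledin's theorem and why the author only \emph{conjectures}, at the end of Section \ref{subsect; stratified flop}, that an explicit (CKL-type) equivalence is tilting-type for general $r$; the remark after Theorem \ref{Kal08 thm1.6} that Kaledin's bundles are too implicit even to decide goodness confirms that no explicit tilting bundle is available in this range. Your step (3), the identification $T_0|_{U_0}\simeq T_0'|_{U_0}$ via the canonical isomorphism $V/L'\xrightarrow{\;\sim\;}L$ induced by $A$ itself (it is $V/L'$, not $(V/L')^\vee$), is likewise only announced, not carried out. Finally, the hypothesis $2r<N$ is not what makes either computation work; it is simply the range in which the stratified Mukai flop is defined (and, elsewhere in the paper, what guarantees $\codim_{X_0}\Sing(X_0)\geq3$). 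To salvage your strategy you would need to prove the Ext-vanishing above, which is an open problem for $r\geq2$, or restrict to $r=1$.
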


\begin{proof}
According to Theorem \ref{Kal equiv}, it is enough to show that there is a good $\Gm$-action on $X_0 = \overline{B(r)}$.

Let $e_1, \dots, e_N$ be a standard basis of $V = \CC^N$ and $f_1, \dots, f_N$ the dual basis of $V^{\vee}$.
We regard $x_{ij} := e_i \otimes f_j \in V \otimes_{\CC} V^{\vee}$ as a variable of the affine coordinate ring of $\End(V) = V^{\vee} \otimes V$.
Then the affine coordinate ring $R_0$ of $X_0$ is a quotient of a polynomial ring $\CC[(x_{ij})_{i,j}]$ and the maximal ideal $\mathfrak{m}_o$ corresponding to 
the origin $o \in X_0$ is the ideal generated by the image of $\{ x_{ij} \}_{i,j}$.

Let us consider an action of $\Gm$ on $X_0$ given by $t \cdot A := t^{-1}A$ for $t \in \Gm$ and $A \in X_0$.
Clearly this action has a unique fixed point $o \in X$.
As a $\Gm$-representation, $\mathfrak{m}_o$ splits into the direct sum of lines spanned by a monomial.
Let $r \in R_0$ be a monomial of degree $d \geq 1$.
Then, for $t \in \Gm$ and $A \in X_0$, we have
\[ r^t(A) = r(t^{-1} \cdot A) = r(tA) = t^dr(A). \]
Thus the action of $\Gm$ on $\mathfrak{m}_o$ is positive weight and hence the action of $\Gm$ on $X_0$ is good.
\end{proof}

As an application of Theorem \ref{main thm equiv}, we have the following result.

\begin{thm} \label{Mukai Atiyah lift}
Assume that $2r \leq N - 1$.
Then, any good and strict tilting-type equivalence between $\D(Y_0)$ and $\D(Y_0')$ lifts to a good tilting-type equivalence between $\D(Y)$ and $\D(Y')$.

In addition, if $2r \leq N -2$,
then any strict tilting-type equivalence between $\D(Y_0)$ and $\D(Y_0')$ lifts to a good tilting-type equivalence between $\D(Y)$ and $\D(Y')$.
\end{thm}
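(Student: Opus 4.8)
The plan is to deduce Theorem \ref{Mukai Atiyah lift} from Theorem \ref{main thm equiv} applied to the explicit one-parameter $\Gm$-equivariant family $Y \dashrightarrow Y'$ over $\Spec D$ with $D = \CC[t]$ and $d = (t)$ described above. First I would verify that this family meets every hypothesis of Theorem \ref{main thm equiv}: $X_0 = \overline{B(r)}$ is normal Gorenstein (its singularities are symplectic); $X = \Spec R$ is affine; the projections $\phi : Y \to X$ and $\phi' : Y' \to X$ are projective (each of $Y, Y'$ is a vector-bundle total space over a Grassmannian, hence proper—indeed projective—over its affinisation) and $\Gm$-equivariant; and the scaling action $s \cdot (A, t) = (s^{-1}A, s^{-1}t)$ on $X$ is good, with unique fixed point the origin $(0,0)$, all coordinate functions there of weight $1$ (the same computation as in the proof of Theorem \ref{tilting SMF}), and $(0,0)$ lying over $d = 0 \in \Spec D$. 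Flatness of $Y, Y', X$ over $D$ is automatic, as these are irreducible varieties dominating the smooth curve $\Spec\CC[t]$.

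The only substantive point is the codimension hypothesis $\codim_{X_0}\Sing(X_0) \geq 3$. Here I would invoke the standard description of the nilpotent orbit closures of type $A$: the singular locus of $X_0 = \overline{B(r)}$ is the smaller orbit closure $\overline{B(r-1)}$, and $\dim\overline{B(k)} = 2k(N-k)$. Hence
\[ \codim_{X_0}\Sing(X_0) = 2r(N-r) - 2(r-1)(N-r+1) = 2(N-2r+1), \]
which is $\geq 3$—indeed $\geq 4$—precisely when $2r \leq N-1$. With this in hand Theorem \ref{main thm equiv} yields a tilting-type equivalence $\D(Y) \simeq \D(Y')$ determined by $(T,T')$ with $\phi_*T \simeq \phi'_*T'$; restricting this last isomorphism to the common open locus $U$ gives $T|_U \simeq T'|_U$ (so the lift is strict), and since $T_0, T_0'$ are good the trivial line-bundle summands lift uniquely to trivial summands of $T, T'$, so the lift is good. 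This proves the first assertion.

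For the addendum, assume $2r \leq N-2$. An entirely analogous count for $\phi_0 : Y_0 = T^*\Gr(r,N) \to X_0$—whose fibre over a general point of $\overline{B(r-1)}$ is a projective space $\PP^{N-2r+1}$—gives $\codim_{Y_0}\exc(\phi_0) = N-2r+1 \geq 3$, and similarly for $\phi_0'$. Thus the extra hypothesis of the $\Gm$-equivariant refinement of Theorem \ref{main thm equiv} (the analogue of Corollary \ref{cor thm form defo}, stated just after Lemma \ref{lem loc tilt}) is satisfied, and \emph{any} strict tilting-type equivalence between $\D(Y_0)$ and $\D(Y_0')$—no longer assumed good—lifts to a tilting-type equivalence between $\D(Y)$ and $\D(Y')$. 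Since moreover $\codim_{Y_0}\exc(\phi_0) \geq 2$ the resolutions $\phi_0, \phi_0'$ are small, so one can twist the given tilting bundles by a line-bundle direct summand to reduce to the good case already treated, obtaining a good lift.

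I expect the main obstacle to be the two codimension estimates: identifying $\Sing\overline{B(r)} = \overline{B(r-1)}$ and the fibre structure of $\phi_0$ over $\overline{B(r-1)}$, and carrying the dimension bookkeeping correctly—everything else is either a citation of Theorem \ref{main thm equiv} and its corollaries or a routine verification that the displayed family is a good $\Gm$-equivariant deformation. A secondary delicate point is ensuring that the lift in the addendum is \emph{good}; this is exactly where smallness of the resolutions and the reduction by a line-bundle summand are used.
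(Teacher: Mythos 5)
Your proposal is correct and follows essentially the same route as the paper: verify the good $\Gm$-action and the bound $\codim_{X_0}\Sing(X_0)=2(N-2r+1)\geq 3$ for $2r\leq N-1$, then for the addendum bound $\codim_{Y_0}\exc(\phi_0)$ and invoke Theorem \ref{main thm equiv} and its corollary, with strictness read off from $\phi_*T\simeq\phi'_*T'$. The only place the paper is more careful is in the exceptional-locus estimate: since $\exc(\phi_0)=\bigcup_{k\leq r-1}\overline{\phi_0^{-1}(B(k))}$, one must check that the codimension $N-2r+1$ computed over the open stratum $B(r-1)$ is actually the minimum over all strata $k$, which the paper does by minimizing the quadratic $2k(N-k)+(r-k)(N-r-k)$ in $k$; your argument should include that (routine) check.
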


\begin{proof}
As in Theorem \ref{tilting SMF}, we can check that there exists a good $\Gm$-action on $X$ which lifts to $\Gm$-actions on $Y$ and $Y'$.
Note that $\dim X_0 = \dim \overline{B(r)} = 2r(N-r)$ and $\Sing(X_0) = \overline{B(r-1)}$.
Thus we have
\begin{align*}
\codim_{X_0} \Sing(X_0) = 2r(N-r) - 2(r-1)(N - r + 1) = 2(N-2r+1).
\end{align*}
Therefore $\codim_{X_0} \Sing(X_0) \geq 3$ if and only if $2r < N$.

Furthermore, if we put
\[ B(k) := \{ A \in \End(V) \mid  A^2 =0, \dim \Ker A = N - k \}, \]
the fiber $\phi_0^{-1}(A)$ of $A \in B(k)$ ($k<r$) is isomorphic to $\Gr(r - k, \Ker(A)/ \mathrm{Im}(A))$ and thus we have
\[ \dim \phi_0^{-1}(A) = (r - k)(N - r - k). \]
Therefore we have
\[ \dim \phi_0^{-1}(B(k)) = 2k(N-k) + (r - k)(N-r-k) \]
for $0 \leq k \leq r -1$ and hence we have
\begin{align*}
 \codim_{Y_0} \overline{\phi_0^{-1}(B(k))} &= 2r(N-r) - 2k(N-k) - (r - k)(N-r-k)\\
%&= 2rN - 2r^2 - 2kN + 2k^2 - rN + kN + r^2 - k^2 \\
%&= rN - r^2 - kN + k^2 \\
&=\left(k- \frac{N}{2}\right)^2 - \frac{N^2}{4} + rN - r^2 \\
&\geq N - 2r + 1.
\end{align*}
and the equality holds if $k = r - 1$.
This shows that $\codim_{Y_0}(\exc(\phi_0)) \geq 3$ if and only if $2r \leq N - 2$.

Let $T$ and $T'$ be tilting bundles that give the tilting-type equivalence between $\D(Y)$ and $\D(Y')$.
Then by Theorem \ref{main thm equiv} we have $\phi_*T \simeq \phi'_*T'$.
This means that the tilting-type equivalence is strict.
\end{proof}

In conclusion, we have the following result.

\begin{thm}
If $2r \leq N - 2$, then there exists a tilting-type equivalence for the stratified Atiyah flop on $\Gr(r, N)$.
\end{thm}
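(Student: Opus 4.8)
The plan is to deduce the statement formally from the two preceding results on stratified flops, without any new construction. First I would note that the hypothesis $2r \leq N-2$ implies in particular $2r < N$, so Theorem \ref{tilting SMF} applies and produces a \emph{strict} tilting-type equivalence between $\D(Y_0)$ and $\D(Y_0')$, i.e. a strict tilting-type equivalence for the stratified Mukai flop on $\Gr(r,N)$. I would invoke Theorem \ref{tilting SMF} here purely as a black box; all of the real content (finding a tilting bundle on $T^*\Gr(r,N)$ via the good $\Gm$-action and Kaledin's machinery) is packaged inside it.

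Next I would feed this equivalence into Theorem \ref{Mukai Atiyah lift}. Because $2r \leq N-2$, the second assertion of that theorem applies verbatim: any strict tilting-type equivalence between $\D(Y_0)$ and $\D(Y_0')$ — with no goodness hypothesis needed in this range — lifts to a good tilting-type equivalence between $\D(Y)$ and $\D(Y')$. By construction $Y \dashrightarrow Y'$ is the stratified Atiyah flop on $\Gr(r,N)$, so this is exactly a tilting-type equivalence for the stratified Atiyah flop, which is what is asserted. If one wants the sharper conclusion that the lift is strict (matching the corollary in the introduction), I would additionally recall that the proof of Theorem \ref{Mukai Atiyah lift}, via Theorem \ref{main thm equiv}, yields lifting tilting bundles $T$ on $Y$ and $T'$ on $Y'$ with $\phi_* T \simeq \phi'_* T'$, whence $T|_U \simeq T'|_U$ on the common open locus $U$ and the equivalence is strict.

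There is essentially no obstacle: the statement is a one-step combination of Theorems \ref{tilting SMF} and \ref{Mukai Atiyah lift}, and the only thing to be careful about is matching hypotheses — namely that $2r \leq N-2$ is precisely what is needed both to make $\codim_{X_0}\Sing(X_0) \geq 3$ hold (so that Theorem \ref{tilting SMF} gives an equivalence and Theorem \ref{main thm equiv} can be applied) and to drop the goodness assumption in the lifting step.
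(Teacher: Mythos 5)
Your proof is correct and is exactly the argument the paper intends: the theorem is stated as an immediate consequence of Theorem \ref{tilting SMF} (applicable since $2r\leq N-2$ implies $2r<N$) combined with the second assertion of Theorem \ref{Mukai Atiyah lift}. The only minor imprecision is in your closing sentence: the condition $\codim_{X_0}\Sing(X_0)\geq 3$ already holds whenever $2r<N$, and it is the stronger bound $2r\leq N-2$ that gives $\codim_{Y_0}(\exc(\phi_0))\geq 3$ and thereby lets you drop the goodness hypothesis in the lifting step.
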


The author expects that the same statement holds for remaining cases:

\begin{conj}
Assume that $2r = N -1$ or $2r = N$.
There exists a tilting-type equivalence for the stratified Atiyah flop on $\Gr(r, N)$.
\end{conj}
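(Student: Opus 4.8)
The plan is to separate the two boundary cases, since they interact quite differently with the machinery of Sections \ref{sect complete local} and \ref{sect equivariant}. Recall from the proof of Theorem \ref{Mukai Atiyah lift} that $\codim_{X_0}\Sing(X_0) = 2(N-2r+1)$, which equals $4$ when $2r = N-1$ and $2$ when $2r = N$. Thus $2r = N-1$ still lies within the range where Theorem \ref{main thm equiv} applies, whereas $2r = N$ falls outside it. It is worth noting, however, that for $2r = N$ the singular locus of the \emph{total} space $X$ has codimension three: the fibres $X_\tau$ over $\tau \neq 0$ are smooth, so $\Sing(X) = \Sing(X_0)$ sits in codimension $2+1 = 3$ in $X$. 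The simplest instance $N=2$, $r=1$ already illustrates this, since there $X$ is the conifold $\{a^2+bc = \tau^2\}$ and $\phi,\phi'$ are its two small resolutions.

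For $2r = N-1$ I would argue as follows. Since $2r < N$, Theorem \ref{tilting SMF} provides a strict tilting-type equivalence for the stratified Mukai flop, and as $\codim_{X_0}\Sing(X_0) = 4 \geq 3$ the only hypothesis of Theorem \ref{Mukai Atiyah lift} left to verify is goodness. The remaining task is therefore to arrange that the tilting bundles $T_0$ on $Y_0$ and $T_0'$ on $Y_0'$ each contain $\stsh$ as a direct summand. The tilting bundles produced by Kaledin's method (Theorem \ref{Kal equiv}) are assembled from Schur functors $\Sigma^\lambda \mathcal{R}$ of the tautological subbundle $\mathcal{R}$ on the Grassmannian; I would check that the generating family of weights can be normalised so that the trivial summand $\stsh = \Sigma^{(0)}\mathcal{R}$ occurs, and that this normalisation is compatible with the strict identification over $U_0 = (X_0)_{\mathrm{sm}}$. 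Granting this, Theorem \ref{Mukai Atiyah lift} lifts the equivalence to a good tilting-type equivalence for the stratified Atiyah flop, settling this case.

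The case $2r = N$ is genuinely harder and must bypass the central fibre, since $\codim_{X_0}\Sing(X_0) = 2$ and, by the counter-example of Section \ref{sect Eg counter}, the deformation machinery cannot be invoked blindly. The idea is to exploit the observation above and regard $\phi : Y \to X$ and $\phi' : Y' \to X$ directly as two crepant resolutions of the single normal Gorenstein affine variety $X$, which carries the good $\Gm$-action $s\cdot(A,\tau) = (s^{-1}A, s^{-1}\tau)$ with unique fixed point the origin, and for which $\codim_X \Sing(X) = 3$. Over $U = X_{\mathrm{sm}}$ both $\phi$ and $\phi'$ are isomorphisms, and the two tautological bundles are canonically identified there. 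The plan is to construct tilting bundles $T$ on $Y$ and $T'$ on $Y'$ --- built from Schur functors of the tautological bundles pulled back from $\Gr(r,N)$ and $\Gr(N-r,N)$ along the bundle projections $Y \to \Gr(r,N)$, $Y' \to \Gr(N-r,N)$ --- in such a way that $T|_U \simeq T'|_U$. Granting strictness, the reflexivity argument of Lemma \ref{lem tilting crep canoni} (using that $\End_Y(T)$ and $\End_{Y'}(T')$ are Cohen--Macaulay, hence reflexive, and agree on the complement of a codimension-three set) yields the $R$-algebra isomorphism $\End_Y(T) \simeq \End_{Y'}(T')$ and hence the desired strict tilting-type equivalence.

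I expect the main obstacle to be in this last case, namely the construction of the tilting bundles themselves. Unlike the central fibre, the total space $X$ is odd-dimensional and not symplectic, so the resolutions $Y, Y'$ are not symplectic resolutions and Kaledin's machinery does not apply to them directly; the Ext-vanishing for a Schur-functor bundle on the total space $Y = \Tot(\widetilde{\Omega}_{\Gr(r,N)})$ must instead be verified by a direct cohomological computation exploiting the specific geometry of the deformed nilpotent orbit. Verifying simultaneously that the two bundles restrict to the same bundle on $U$ --- essentially re-proving the flop identification in the degenerate range $2r = N$ --- is the delicate point, and is where the explicit representation theory of the tautological bundles, rather than the general deformation theorems, will have to do the work.
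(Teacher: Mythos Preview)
This statement is a \emph{Conjecture} in the paper, and the paper offers no proof; the only evidence given is the example immediately following it, which records that the low-dimensional cases $(N,r)=(2,1)$ and $(N,r)=(3,1)$ are classically known (the $3$-fold Atiyah flop and the standard $4$-fold flop). There is therefore no paper proof against which to compare your proposal; what you have written is a plan of attack on an open problem.

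As a plan your case separation is sound and matches the paper's own diagnosis of the obstructions. But there is a genuine gap in the case $2r = N-1$: you assert that Kaledin's tilting bundles are ``assembled from Schur functors $\Sigma^\lambda \mathcal{R}$ of the tautological subbundle'' and can be normalised to contain $\stsh$ as a summand. Kaledin's construction (via quantisation in positive characteristic) is abstract and does not produce bundles in this explicit form; the paper itself remarks, just before Theorem~\ref{Kal equiv}, that ``it is not clear whether the tilting bundle he constructed is good or not.'' Your assertion thus presupposes precisely the point at issue, and without an independent proof of goodness the appeal to the first part of Theorem~\ref{Mukai Atiyah lift} does not close the case. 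For $2r = N$ you are explicit that the construction of tilting bundles on the non-symplectic total spaces $Y,Y'$, together with the strictness verification, remains to be done, so that portion is an outline rather than an argument; your observation that $\codim_X \Sing(X) = 3$ is correct and potentially useful, but it does not by itself supply the tilting bundles.
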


This conjecture is true in the following low dimensional cases:

\begin{eg} \rm
If $N = 2$ and $r = 1$, the stratified Atiyah flop is the $3$-fold Atiyah flop.
If $N = 3$ and $r = 1$, the stratified Atiyah flop on $\Gr(1,3)$ is the usual standard flop of $4$-folds.
In these cases, the conjecture above is known to be true.
\end{eg}

The author also expects that CKL's equivalence for a stratified Mukai flop is tilting-type.
Indeed, if $r=1$, CKL's equivalence for a Mukai flop is tilting-type \cite{Cau12, H17a}.
In addition, as noted above, Cautis proved that CKL's equivalence extends to an equivalence for a stratified Atiyah flop \cite[Theorem 4.1]{Cau12}
as in our Theorem \ref{Mukai Atiyah lift}.

Finally we note that the discussions above also show the following result.

\begin{thm}
The nilpotent orbit closure $X_0 := \overline{B(r)}$ (resp. its $\Gm$-equivariant deformation $X$) admits an NCCR for all $2r \leq N$ that is derived equivalent to $Y_0$ and $Y'_0$ (resp. $Y$ and $Y'$).
\end{thm}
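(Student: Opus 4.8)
The plan is to reduce the whole statement to the production of a single good tilting bundle and then feed it into the machinery already assembled. Suppose we have a tilting bundle $T_0$ on $Y_0$ (resp.\ $T$ on $Y$) that is good, and on the other side a tilting bundle $T_0'$ on $Y_0'$ (resp.\ $T'$ on $Y'$) together with an $R_0$-algebra (resp.\ $R$-algebra) isomorphism $\End_{Y_0}(T_0)\simeq\End_{Y_0'}(T_0')$. Then Lemma~\ref{from tilting to NCCR}, together with Corollary~\ref{cor CMness} and Lemma~\ref{lem wemyss}, shows that $M_0:=(\phi_0)_*T_0$ (resp.\ $M:=\phi_*T$) gives an NCCR $\End_{Y_0}(T_0)\simeq\End_{R_0}(M_0)$ of $R_0$ (resp.\ of $R$), and Proposition~\ref{tilting equiv} yields equivalences $\D(\End_{R_0}(M_0))\simeq\D(Y_0)$ and, via the algebra isomorphism, $\D(\End_{R_0}(M_0))\simeq\D(Y_0')$. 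So it remains to exhibit such bundles in all cases $2r\le N$.

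For the affine variety $X_0$ this is immediate. When $2r<N$, Theorem~\ref{tilting SMF} (via Theorem~\ref{Kal equiv}, applied to the symplectic singularity $X_0=\overline{B(r)}$ with the scaling $\Gm$-action constructed in its proof) provides tilting bundles $(T_0,T_0')$ on $(Y_0,Y_0')$ with $T_0|_{U_0}\simeq T_0'|_{U_0}$ and an $R_0$-algebra isomorphism $\End_{Y_0}(T_0)\simeq\End_{Y_0'}(T_0')$; since $\codim_{Y_0}\exc(\phi_0)\ge N-2r+1\ge 2$ the resolution $\phi_0$ is even small, so Lemma~\ref{from tilting to NCCR} applies without any goodness hypothesis. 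When $2r=N$ we have $\Gr(r,N)=\Gr(N-r,N)$, hence $Y_0=Y_0'$ (and $\phi_0,\phi_0'$ are two maps out of the same variety), so it suffices to produce a good tilting bundle on $Y_0$; this is given by Theorem~\ref{Kal equiv} applied to the symplectic resolution $\phi_0:Y_0\to X_0$, and it is good because Kaledin's construction assembles it from a Kapranov-type exceptional collection on $\Gr(r,N)$, which contains $\stsh$. In either case $M_0:=(\phi_0)_*T_0$ gives an NCCR of $R_0$ derived equivalent to $Y_0$ and to $Y_0'$.

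For the deformation $X$ there are again two regimes. If $2r\le N-1$, then $\codim_{X_0}\Sing(X_0)=2(N-2r+1)\ge 3$, so Theorem~\ref{Mukai Atiyah lift} (equivalently Theorem~\ref{main thm equiv}) lifts the above equivalence to \emph{good} tilting bundles $(T,T')$ on $(Y,Y')$ with $\End_Y(T)\simeq\End_{Y'}(T')$; here $\phi$ need not be small, but $T$ is good, so Lemma~\ref{from tilting to NCCR} still gives that $M:=\phi_*T$ is an NCCR of $R$ (using that, from the explicit description of the family, $X$ is normal Gorenstein and $\phi$ is a crepant resolution), derived equivalent to $Y$ and $Y'$ by Proposition~\ref{tilting equiv}. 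If $2r=N$, the codimension hypothesis of Theorems~\ref{thm formal def} and \ref{main thm equiv} fails, but $Y\cong Y'$ (total spaces of $\widetilde{\Omega}$ on the same Grassmannian), so it again suffices to produce one good tilting bundle on $Y$. One obtains it by lifting the good tilting bundle $T_0$ on $Y_0$ along the deformation: infinitesimally by Proposition~\ref{Kar1}, to the formal completion by Grothendieck's existence theorem, and then down to a $\Gm$-equivariant good tilting bundle $T$ on $Y$ by Theorem~\ref{Kal08 thm1.8}, using the good $\Gm$-action on $X$ (the scaling action, whose unique fixed point lies over $d$). Crucially, the codimension bound is only used in Sections~\ref{sect complete local}--\ref{sect equivariant} for the \emph{compatibility} of the two sides (via rigidity of $M_0$), not for lifting a single bundle. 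Then $M:=\phi_*T$ is an NCCR of $R$ derived equivalent to $Y\cong Y'$.

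The main obstacle is precisely the boundary case $2r=N$: since $\codim_{X_0}\Sing(X_0)=2$, none of the deformation theorems of Sections~\ref{sect complete local}--\ref{sect equivariant} apply directly, and one must argue by hand, as above, that a good tilting bundle lifts to $Y$ and that $X$ remains normal Gorenstein with $\phi$ crepant so that Corollary~\ref{cor CMness}, Lemma~\ref{lem wemyss} and Lemma~\ref{from tilting to NCCR} stay available. A secondary, essentially routine point is to confirm that the tilting bundles produced by Kaledin's construction are good, i.e.\ contain $\stsh$ as a direct summand, which follows from their being built out of Kapranov's exceptional collection on the Grassmannian.
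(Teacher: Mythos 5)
Your overall strategy diverges from the paper's in one important way: the paper does not try to realize the derived equivalence between the NCCR and \emph{both} resolutions through matching tilting bundles. It lifts a \emph{single} tilting bundle $T_0$ from $Y_0$ to $Y$ (which, as you correctly observe, needs neither goodness nor the codimension bound — those enter only in comparing the two sides via rigidity), applies Lemma \ref{from tilting to NCCR} to the small morphism $\phi:Y\to X$, and then simply cites Cautis's Fourier--Mukai equivalence $\D(Y)\simeq\D(Y')$ for the remaining half of the statement. Since the theorem only asks for an NCCR that is derived equivalent to $Y$ and to $Y'$ — not for a tilting-type equivalence between them — this one-sided route suffices and avoids all of the boundary-case difficulties you wrestle with.

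The genuine gap in your argument is the assertion that Kaledin's tilting bundles are good because they are ``assembled from a Kapranov-type exceptional collection on $\Gr(r,N)$, which contains $\stsh$.'' That is not how Kaledin's construction proceeds (it goes through quantization in positive characteristic), and the paper explicitly states, just before Theorem \ref{Kal equiv}, that it is \emph{not} known whether the resulting tilting bundle is good. This claim is load-bearing in your proof at exactly two places: (i) for $X_0$ with $2r=N$, where $\phi_0$ is divisorial (its exceptional locus over $\overline{B(r-1)}$ has codimension $1$ in $Y_0$, cf.\ the $N=2$, $r=1$ example where $\exc(\phi_0)$ is a $(-2)$-curve), so Lemma \ref{from tilting to NCCR} genuinely requires $T_0$ to be good — and the paper's own remark after Lemma \ref{lem tilting crep canoni} shows the pushforward of a non-good tilting bundle along a divisorial crepant resolution can fail to be reflexive; and (ii) for the deformation with $2r=N-1$, where $\codim_{Y_0}\exc(\phi_0)=2$ forces you to invoke the ``good and strict'' version of Theorem \ref{main thm equiv} rather than its corollary. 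Neither case can be closed with the justification you give. For the deformation cases you could repair this by dropping the two-sided tilting data entirely, lifting one tilting bundle to the small resolution $\phi:Y\to X$ (goodness not needed there), and quoting Cautis for $\D(Y)\simeq\D(Y')$, which is precisely what the paper does; but the $X_0$, $2r=N$ instance of your argument would still need an independent source of a good tilting bundle on $Y_0$.
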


\begin{proof}
We can take a lift $T$ of a tilting bundle $T_0$ on $Y_0$ to $Y$ without assuming that $T_0$ is good or $\codim_{X_0} \Sing(X_0) \geq 3$.
Then the result follows from Lemma \ref{from tilting to NCCR}.
As noted above, derived equivalences for $Y$ and $Y'$ was proved by Cautis.
\end{proof}

\subsection{A counter-example} \label{sect Eg counter}
In the present subsection, we provide an example that suggests we cannot remove the assumption $\codim_{X_0} \Sing(X_0) \geq 3$ in Theorem \ref{main thm equiv}.

Let us consider the case if $N = 2$ and $r=1$ in the subsection above.
Then $X_0 = \overline{B(1)}$ has a du Val singularity of type $A_1$, and $Y_0 = Y_0'$ is the total space of a line bundle $\stsh_{\PP^1}(-2)$ on $\PP^1$.
Moreover the singularity of $X$ is a threefold $A_1$-singularity, and $Y$ and $Y'$ are isomorphic to the total space of $\stsh_{\PP^1}(-1)^{\oplus 2}$ as abstract varieties.
However there is no isomorphism $f : Y \xrightarrow{\sim} Y'$ such that $\phi = \phi' \circ f$.
A bundle $T_0 = \stsh_{Y_0} \oplus \stsh_{Y_0}(-1)$, where $\stsh_{Y_0}(-1)$ is a pull-back of $\stsh_{\PP^1}(-1)$ to $Y_0$, is a good tilting bundle on $Y_0$.

A pair of tilting bundles $(T_0, T_0)$ provides a good and strict tilting-type equivalence $\D(Y_0) \simeq \D(Y_0)$, which is identity.
$T_0$ lifts to a good tilting bundle $T = \stsh_Y \oplus \stsh_Y(-1)$  on $Y$, where $\stsh_Y(-1)$ is a pull-back of $\stsh_{\PP^1}(-1)$ to $Y$.
Similarly, $T_0$ lifts to a tilting bundle $T' = \stsh_{Y'} \oplus \stsh_{Y'}(-1)$  on $Y'$, where $\stsh_{Y'}(-1)$ is a pull-back of $\stsh_{\PP^1}(-1)$ on $Y'$.

However we have $T|_{Y^o} \neq T'|_{Y^o}$, where $Y^o$ is the common open subset of $Y$ and $Y'$.
Indeed, we have $\phi_* \stsh_Y(-1) \neq \phi'_* \stsh_{Y'}(-1)$.

On the other hand, a pair of tilting bundles $(T_0, T_0^{\vee})$ induces a good and strict tilting-type equivalence $\D(Y_0) \xrightarrow{\sim} \D(Y_0)$, which is a spherical twist around a sheaf $\stsh_{\PP^1}(-1)$ on the zero-section $\PP^1 \subset Y_0$.

Since $T_0^{\vee}$ lifts to a bundle $T'^{\vee}$ on $Y'$ and one has $T|_{Y^o} \simeq T'^{\vee}|_{Y^o}$, the above equivalence lifts to a good and strict tilting-type equivalence
\[ \RHom_Y(T,-) \otimes^{\mathrm{L}}_{\End_Y(T)} T'^{\vee} : \D(Y) \xrightarrow{\sim} \D(Y'). \]

%%%%%%%%%%%%%%%%%%%%%%%%%%%%%%%%%%%%%%%%%%%%%%%%%%%%%%%%%%%%%%%%%%%%%
\appendix
\section{Symplectic resolutions} \label{sect appendix}

\subsection{Definitions and Properties}

In this subsection, we recall some basics of symplectic singularities.

\begin{defi}[\cite{Be00}] \rm
Let $X$ be an algebraic variety. We say that $X$  is a \textit{symplectic variety} if
\begin{enumerate}
\item[(i)] $X$ is normal.
\item[(ii)] The smooth part $X_{\mathrm{sm}}$ of $X$ admits a symplectic $2$-form $\omega$.
\item[(iii)] For every resolution $f : Y \to X$, the pull back of $\omega$ to $f^{-1}(X_{\mathrm{sm}})$ extends to a global holomorphic $2$-form on $Y$.
\end{enumerate}
Let $X$ be an algebraic variety. We say that a point $x \in X$ is a \textit{symplectic singularity} if there is an open neighborhood $U$ of $x$ such that $U$ is a symplectic variety.  
\end{defi}

Symplectic singularities belong to a good class of singularities that appears in minimal model theory.

\begin{prop}[\cite{Be00}]
A symplectic singularity is Gorenstein canonical.
\end{prop}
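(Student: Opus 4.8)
The plan is to follow Beauville's argument: the symplectic form trivializes the canonical sheaf, and condition (iii) forces all discrepancies to be nonnegative. Write $\dim X = 2n$ and let $\omega$ be the symplectic form on $X_{\mathrm{sm}}$.

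First I would note that $\omega^{\wedge n}$ is a nowhere-vanishing section of $\Omega^{2n}_{X_{\mathrm{sm}}} = K_{X_{\mathrm{sm}}}$, so $K_{X_{\mathrm{sm}}} \simeq \stsh_{X_{\mathrm{sm}}}$. Since $X$ is normal, $\codim_X \Sing(X) \geq 2$, so pushing forward along the inclusion $j : X_{\mathrm{sm}} \hookrightarrow X$ shows that the canonical (dualizing) sheaf $\omega_X = j_* K_{X_{\mathrm{sm}}}$ is again trivial. In particular $K_X$ is Cartier, so $X$ is $\mathbb{Q}$-Gorenstein of index one and it makes sense to speak of its discrepancies. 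Next, for an arbitrary resolution $\pi : Y \to X$ I would write $K_Y = \pi^* K_X + \sum_i a_i E_i$ with the $E_i$ the $\pi$-exceptional prime divisors; since $\omega_X \simeq \stsh_X$ we may take $\pi^* K_X = 0$, so that $\sum_i a_i E_i = \operatorname{div}_Y(\pi^*(\omega^{\wedge n}))$ as divisors on $Y$. By condition (iii) the a priori rational form $\pi^*\omega$ extends to a global holomorphic $2$-form $\widetilde{\omega}$ on $Y$, hence $\widetilde{\omega}^{\wedge n}$ is a genuine regular section of $\Omega^{2n}_Y = K_Y$; as it agrees with $\pi^*(\omega^{\wedge n})$ on the dense open set $\pi^{-1}(X_{\mathrm{sm}})$, the two coincide as rational sections of $K_Y$. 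Therefore $\sum_i a_i E_i = \operatorname{div}_Y(\widetilde{\omega}^{\wedge n}) \geq 0$, and since the $E_i$ are distinct primes, every $a_i \geq 0$. As every resolution of $X$ satisfies (iii), this shows all discrepancies of $X$ are nonnegative, i.e. $X$ has canonical singularities.

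To finish, I would upgrade ``$\omega_X$ invertible'' to ``Gorenstein'' by recalling that a normal variety with Cartier canonical divisor and canonical singularities has rational singularities (Elkik), and that rational singularities are Cohen--Macaulay; combined with $\omega_X \simeq \stsh_X$ this gives that $X$ is Gorenstein. The only genuinely non-formal input is condition (iii) — it is precisely the extendability of $\pi^*\omega$ across the exceptional locus that produces the inequality in the discrepancy computation — together with the external fact that canonical Gorenstein singularities of index one are rational, hence Cohen--Macaulay. Everything else is the codimension-two reflexivity argument and bookkeeping with divisors on the resolution; I expect the citation of the rationality/Cohen--Macaulayness statement to be the only point that cannot be verified by hand in a few lines.
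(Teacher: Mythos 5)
Your proof is correct and is precisely Beauville's original argument for this statement; the paper itself offers no proof and simply cites \cite{Be00}, where the reasoning is exactly as you describe (the top power of the symplectic form trivializes $\omega_X$, condition (iii) makes the discrepancies nonnegative, and Elkik's theorem plus Cohen--Macaulayness of rational singularities upgrades index one to Gorenstein). No gaps.
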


For symplectic singularities, we can consider the following reasonable class of resolutions.

\begin{defi} \rm
Let $X$ be a symplectic variety.
A resolution $\phi : Y \to X$ of $X$ is called \textit{symplectic} if the extended $2$-form $\omega$ on $Y$ is non-degenerate.
In other words, the $2$-form $\omega$ defines a symplectic structure on $Y$.
\end{defi}

It is easy to observe the following criteria for symplectic resolutions.

\begin{prop}
Let $X$ be a symplectic variety and $\phi : Y \to X$ a resolution. Then, the following statements are equivalent
\begin{enumerate}
\item[(1)] $\phi$ is a crepant resolution,
\item[(2)] $\phi$ is a symplectic resolution,
\item[(3)] the canonical divisor $K_Y$ of $Y$ is trivial.
\end{enumerate}
\end{prop}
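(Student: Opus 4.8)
The plan is to use the extension $\tilde{\omega}$ of the symplectic form as the bridge between all three conditions, together with the fact recorded in the preceding proposition that symplectic singularities are Gorenstein canonical.

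First I would record two preliminary facts. First, since $\omega^{n}$ (where $2n=\dim X$) trivializes the canonical sheaf of $X_{\mathrm{sm}}$, and $X\setminus X_{\mathrm{sm}}$ has codimension $\geq 2$ in the normal variety $X$, reflexivity of the (Gorenstein, hence invertible) canonical sheaf gives $\omega_{X}\simeq\stsh_{X}$, i.e. $K_{X}\sim 0$. Second, by condition (iii) in the definition of a symplectic variety the form $\phi^{*}\omega$ extends to a global $2$-form $\tilde{\omega}$ on $Y$, and its top power $\tilde{\omega}^{\,n}$ is a global section of $\omega_{Y}$ which is nonzero, because $\tilde{\omega}$ is nondegenerate on the dense open locus $Y\setminus\exc(\phi)$, which $\phi$ identifies with an open subset of $X_{\mathrm{sm}}$. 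Hence $\tilde{\omega}^{\,n}$ defines an effective divisor $\Delta:=\mathrm{div}(\tilde{\omega}^{\,n})\geq 0$ with $\stsh_{Y}(\Delta)\simeq\omega_{Y}$, and $\Delta$ is supported on $\exc(\phi)$; in particular $\Delta$ is $\phi$-exceptional.

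The core step is a negativity observation: an effective $\phi$-exceptional divisor $B$ on $Y$ with $B\sim 0$ must vanish. Indeed, writing $B=\mathrm{div}_{Y}(f)$ with $f\in\CC(X)^{\times}$ and pushing forward, $\mathrm{div}_{X}(f)=\phi_{*}B=0$, so $f$ is a unit on the normal variety $X$, hence a unit on $Y$, forcing $B=\mathrm{div}_{Y}(f)=0$ (this is a special case of the negativity lemma). Granting this, the equivalences follow quickly, and I would prove $(1)\Leftrightarrow(3)$ and $(2)\Leftrightarrow(3)$. For $(1)\Leftrightarrow(3)$: if $\phi$ is crepant then $K_{Y}=\phi^{*}K_{X}\sim 0$ by the first preliminary fact; conversely, writing $K_{Y}=\phi^{*}K_{X}+\sum a_{i}E_{i}$ with $a_{i}\geq 0$ (canonicity, from the previous proposition) and using $K_{X}\sim 0$, the effective $\phi$-exceptional divisor $\sum a_{i}E_{i}\sim K_{Y}\sim 0$ vanishes, so all $a_{i}=0$ and $\phi$ is crepant. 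For $(2)\Leftrightarrow(3)$: $\phi$ is symplectic iff $\tilde{\omega}$ is everywhere nondegenerate iff $\tilde{\omega}^{\,n}$ is nowhere zero (a pointwise linear-algebra fact) iff $\Delta=0$; and $\Delta=0$ immediately gives $\omega_{Y}\simeq\stsh_{Y}$, while conversely $K_{Y}\sim 0$ makes $\Delta\sim 0$, hence $\Delta=0$ by the negativity observation.

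I expect the only genuine subtleties to lie in the two geometric claims packaged into the preliminary step: that $\phi$ really is an isomorphism over a dense open subset of $X_{\mathrm{sm}}$ (so that $\tilde{\omega}^{\,n}\neq 0$) and that no component of $\Delta$ dominates a divisor of $X$ (so that $\Delta$ is $\phi$-exceptional). Both reduce to the same point: a prime divisor $E$ of $Y$ with $\phi(E)$ a divisor would lie over the smooth locus and over the isomorphism locus of $\phi$ generically, where $\tilde{\omega}^{\,n}$ is invertible, so $E$ cannot be exceptional and cannot be a component of $\Delta$. Everything else is formal once the negativity observation is in hand.
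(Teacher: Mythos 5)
Your proof is correct and complete; the paper itself offers no argument for this proposition (it is stated as an easy observation in the appendix), so there is nothing to compare against beyond noting that your route --- extending $\omega$ to $\tilde{\omega}$, observing $K_X\sim 0$ and $\operatorname{div}(\tilde{\omega}^{\,n})$ effective and $\phi$-exceptional, and killing effective exceptional divisors linearly equivalent to zero --- is exactly the standard one going back to Beauville. Your elementary substitute for the negativity lemma (writing the divisor as $\operatorname{div}_Y(f)$ and pushing forward to see $f$ is a unit on the normal variety $X$) is clean and correct, as is your verification that no component of $\operatorname{div}(\tilde{\omega}^{\,n})$ can dominate a divisor of $X$.
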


\subsection{Derived equivalence}
In this subsection, we discuss the derived equivalence for symplectic resolutions.
First we recall the following theorem proved by Kaledin's \cite{Kal08}.
Note that the following theorem is not stated explicitly in \cite{Kal08},
but we can obtain it by analyzing the proof of Theorem 1.6 of loc. cit.

\begin{thm}[\cite{Kal08}, Theorem 1.6] \label{Kal08 thm1.6}
Let $X = \Spec R$ be an affine symplectic variety,
and let $\phi : Y \to X$ and $\phi' : Y' \to X$ be two symplectic resolutions of $X$.
Then, for every maximal ideal $\mathfrak{m} \subset R$,
there exists a strict tilting-type equivalence between $\D(Y \otimes \widehat{R_{\mathfrak{m}}})$ and $\D(Y' \otimes \widehat{R_{\mathfrak{m}}})$
defined by tilting bundles $(\EE, \EE')$, satisfying
\[ (\phi \otimes \widehat{R_{\mathfrak{m}}})_*\EE \simeq (\phi' \otimes \widehat{R_{\mathfrak{m}}})_* \EE'. \]
\end{thm}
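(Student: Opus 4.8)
The strategy is to unwind Kaledin's proof of \cite[Theorem 1.6]{Kal08} and record the intermediate objects it produces, rather than merely its final derived-equivalence statement. First I would spread out the data: choose a finitely generated subring $\Lambda \subset \CC$ over which $X$, $Y$, $Y'$, $\phi$, $\phi'$, the symplectic form, and the completion $\widehat{R_{\mathfrak{m}}}$ all admit models, and then reduce modulo a sufficiently large prime $p$ to obtain symplectic resolutions $\widehat{\phi}_{\mathbf{k}} : \widehat{Y}_{\mathbf{k}} \to \widehat{X}_{\mathbf{k}}$ and $\widehat{\phi}'_{\mathbf{k}} : \widehat{Y}'_{\mathbf{k}} \to \widehat{X}_{\mathbf{k}}$ over an algebraically closed field $\mathbf{k}$ of characteristic $p$, working throughout over the completion at the point lying above $\mathfrak{m}$.

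Next I would invoke the construction of a Frobenius-constant quantization of a symplectic resolution in characteristic $p$: there is a sheaf of algebras $\mathcal{O}_h$ on (the Frobenius twist of) $\widehat{Y}_{\mathbf{k}}$ whose pushforward is a split Azumaya algebra on $\widehat{X}_{\mathbf{k}}$, and a splitting module yields a vector bundle $\EE_{\mathbf{k}}$ on $\widehat{Y}_{\mathbf{k}}$ which, by Kaledin's argument, is tilting. The essential point is that $H^{>0}(\widehat{Y}_{\mathbf{k}}, \mathcal{O}_h) = 0$, so that $\End(\EE_{\mathbf{k}}) \simeq \widehat{\phi}_{\mathbf{k},*}\EE_{\mathbf{k}}$ is a central reduction of the quantization algebra of $\widehat{R}_{\mathbf{k}}$, and this algebra — once the quantization period is fixed — depends only on the symplectic base, not on the resolution. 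Running the same construction on $\widehat{Y}'_{\mathbf{k}}$ with a matching choice of period produces a tilting bundle $\EE'_{\mathbf{k}}$ with $\widehat{\phi}'_{\mathbf{k},*}\EE'_{\mathbf{k}} \simeq \widehat{\phi}_{\mathbf{k},*}\EE_{\mathbf{k}}$ as modules over this quantization, hence in particular an algebra isomorphism $\End(\EE_{\mathbf{k}}) \simeq \End(\EE'_{\mathbf{k}})$; by Proposition \ref{tilting equiv} this already gives the derived equivalence in characteristic $p$.

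To descend back to characteristic zero I would use that all of this lives over the formal completion and that the relevant conditions are open/closed in the arithmetic family. The partial tilting bundles $\EE_{\mathbf{k}}$ and $\EE'_{\mathbf{k}}$ lift uniquely along the family (the obstruction lies in $\Ext^1$ of the endomorphism bundle, which vanishes, cf. Proposition \ref{Kar1}) to tilting bundles $\EE$ on $Y \otimes \widehat{R_{\mathfrak{m}}}$ and $\EE'$ on $Y' \otimes \widehat{R_{\mathfrak{m}}}$, and the module isomorphism $\widehat{\phi}_{\mathbf{k},*}\EE_{\mathbf{k}} \simeq \widehat{\phi}'_{\mathbf{k},*}\EE'_{\mathbf{k}}$ propagates to $(\phi \otimes \widehat{R_{\mathfrak{m}}})_*\EE \simeq (\phi' \otimes \widehat{R_{\mathfrak{m}}})_*\EE'$; this is precisely the lifting step in \cite{Kal08}. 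Restricting the latter isomorphism to the smooth locus $X_{\mathrm{sm}}$, where both resolutions are isomorphisms, gives $\EE$ and $\EE'$ agreeing on the common open subset, so the resulting tilting-type equivalence is \emph{strict}, as claimed.

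The main obstacle is exactly this last extraction: \cite{Kal08} is written so as to output a derived equivalence, and one must follow the construction closely enough to verify (i) that the splitting bundles on the two resolutions have genuinely isomorphic pushforward modules over $\widehat{R_{\mathfrak{m}}}$, not merely isomorphic endomorphism algebras — which requires aligning the quantization periods on $Y$ and $Y'$ — and (ii) that the positive-characteristic tilting bundles lift canonically to characteristic zero. Point (ii) is also what forces the statement to be formulated over the completion $\widehat{R_{\mathfrak{m}}}$, since the quantization and its splitting are only produced formal-locally around a fixed point of the symplectic variety.
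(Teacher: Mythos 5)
Your proposal is correct and takes essentially the same route as the paper, which offers no independent argument for this statement but simply records that Kaledin's construction in the proof of \cite[Theorem 1.6]{Kal08} --- spreading out, reduction modulo a large prime, Frobenius-constant quantization whose pushforward is a split Azumaya algebra on the formal neighborhood, splitting modules as tilting bundles, and unobstructed lifting back to characteristic zero over the completion --- produces tilting bundles on the two resolutions whose pushforwards to $\Spec \widehat{R_{\mathfrak{m}}}$ are identified, whence strictness by restricting to $X_{\mathrm{sm}}$. The one slip is notational: $\End(\EE_{\mathbf{k}})$ is not isomorphic to $\widehat{\phi}_{\mathbf{k},*}\EE_{\mathbf{k}}$ itself but to the global sections of $\widehat{\phi}_{\mathbf{k},*}\EEnd(\EE_{\mathbf{k}})$, i.e.\ to the central reduction of the quantization of which $\widehat{\phi}_{\mathbf{k},*}\EE_{\mathbf{k}}$ is a module; this does not affect the argument.
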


%The property that the equivalence is strict follows from his proof.
Since the construction of tilting bundles is very complicated, it is not clear whether the tilting bundle he constructed is good or not (at least for the author).

\begin{thm} \label{Kal equiv}
Let $X = \Spec R$ be an affine symplectic variety, and $\phi : Y \to X$ and $\phi : Y' \to X$ two symplectic resolutions of $X$.
Assume that $X$ admits a good $\Gm$-action.
Then there exists a strict tilting-type equivalence between $\D(Y)$ and $\D(Y')$.
\end{thm}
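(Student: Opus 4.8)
The plan is to complete $X$ at its unique $\Gm$-fixed point, use Kaledin's results there to manufacture a strict tilting-type equivalence together with $\Gm$-equivariant tilting bundles on $Y$ and $Y'$, and then descend along the completion using the equivalence between finitely generated $\Gm$-equivariant modules over $R$ and over its completion.

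First I would set up notation. Let $\mathfrak{m} \subset R$ be the maximal ideal of the unique $\Gm$-fixed closed point $x \in X$, let $\widehat{R} := \widehat{R_{\mathfrak{m}}}$ and $\widehat{X} := \Spec \widehat{R}$, and put $\widehat{Y} := Y \times_X \widehat{X}$ and $\widehat{Y}' := Y' \times_X \widehat{X}$ with projections $\widehat{\phi}$ and $\widehat{\phi}'$. Since $X$ is symplectic it is normal and Gorenstein, and $\phi$, $\phi'$, being symplectic resolutions, are crepant. Applying Theorem \ref{Kal08 thm1.6} to $\mathfrak{m}$ produces tilting bundles $\widehat{\EE}$ on $\widehat{Y}$ and $\widehat{\EE}'$ on $\widehat{Y}'$ together with an $\widehat{R}$-algebra isomorphism $\End_{\widehat{Y}}(\widehat{\EE}) \simeq \End_{\widehat{Y}'}(\widehat{\EE}')$ and an isomorphism $\widehat{\phi}_*\widehat{\EE} \simeq \widehat{\phi}'_*\widehat{\EE}'$, i.e.\ a strict tilting-type equivalence over $\widehat{X}$. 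By Theorem \ref{Kal08 thm1.8}, $\widehat{\EE}$ (resp.\ $\widehat{\EE}'$) is the pull-back of a $\Gm$-equivariant tilting bundle $\EE$ on $Y$ (resp.\ $\EE'$ on $Y'$). Write $M := \phi_*\EE$ and $M' := \phi'_*\EE'$, which are finitely generated $\Gm$-equivariant $R$-modules; since $\phi$, $\phi'$ are proper, flat base change along $R \to \widehat{R}$ gives $\widehat{M} \simeq \widehat{\phi}_*\widehat{\EE}$ and $\widehat{M}' \simeq \widehat{\phi}'_*\widehat{\EE}'$.

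The crux is to promote $\widehat{M} \simeq \widehat{M}'$ to a $\Gm$-equivariant isomorphism. This is exactly the content of Steps 2 and 3 in the proof of Theorem \ref{main thm equiv}: one reconstructs Kaledin's $\Gm$-equivariant structures on $\widehat{\phi}_*\widehat{\EE}$ and $\widehat{\phi}'_*\widehat{\EE}'$ from the $\Gm$-action on $\widehat{R}$ via the associated derivation and a lift of the identity over the dual numbers, and one observes that the ambiguity in this construction is a torsor under $\End_{\widehat{Y}}(\widehat{\EE})$, which is identified with $\End_{\widehat{Y}'}(\widehat{\EE}')$ through the strict tilting-type equivalence over $\widehat{X}$; hence the lifts can be chosen compatibly and the isomorphism $\widehat{M} \simeq \widehat{M}'$ may be taken $\Gm$-equivariant. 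By Lemma \ref{Kal08 lem5.3} this descends to a $\Gm$-equivariant, hence in particular an $R$-module, isomorphism $M \simeq M'$.

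It remains to conclude. Restricting $M \simeq M'$ to $U := X_{\mathrm{sm}}$, over which both $\phi$ and $\phi'$ are isomorphisms, and identifying $\phi^{-1}(U) \cong U \cong \phi'^{-1}(U)$, yields $\EE|_U \simeq \EE'|_U$. Since $\EE$ and $\EE'$ are tilting bundles on the crepant resolutions $Y$ and $Y'$ of the normal Gorenstein affine variety $X$ that agree on $U$, Lemma \ref{lem tilting crep canoni} gives a strict tilting-type equivalence $\D(Y) \simeq \D(Y')$, as required. The main obstacle is the middle step, the equivariance of the descent isomorphism: Theorems \ref{Kal08 thm1.6} and \ref{Kal08 thm1.8} must be combined carefully rather than quoted as black boxes, so that the module identification over $\widehat{X}$ is seen to respect the $\Gm$-structures coming from the descent of the tilting bundles, which is why one has to replay the bookkeeping of Steps 2--3 of the proof of Theorem \ref{main thm equiv}.
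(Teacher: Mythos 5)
Your proposal is correct and follows essentially the same route as the paper: complete at the unique fixed point, invoke Theorem \ref{Kal08 thm1.6} to get the strict tilting-type equivalence over $\widehat{R}$, descend the tilting bundles via Theorem \ref{Kal08 thm1.8}, match the $\Gm$-equivariant structures on the push-forwards by replaying Steps 2--3 of the proof of Theorem \ref{main thm equiv}, and descend the isomorphism $\widehat{\phi}_*\widehat{\EE} \simeq \widehat{\phi}'_*\widehat{\EE}'$ using Lemma \ref{Kal08 lem5.3}. Your explicit final appeal to Lemma \ref{lem tilting crep canoni} merely spells out what the paper leaves implicit in its concluding sentence.
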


\begin{proof}
First we note that the $\Gm$-action on $X$ lifts to a $\Gm$-action on $Y$ and $Y'$ \cite[Theorem 1.8 (i)]{Kal08}.
Let $\mathfrak{m} \subset R$ be the maximal ideal that corresponds to a unique fixed point of $X$.
Let $\widehat{R}$ be the completion of $R$ with respect to $\mathfrak{m} \subset R$.
Put $\widehat{Y} := Y \times_X \Spec \widehat{R}$ and $\widehat{Y}' := Y' \times_X \Spec \widehat{R}$,
and let $\widehat{\phi} : \widehat{Y} \to \Spec \widehat{R}$ and $\widehat{\phi'} : \widehat{Y}' \to \Spec \widehat{R}$ be the projections.
%Let $Y^o \subset Y, ~ Y'$ be the largest common open subscheme of $Y$ and $Y'$. 
%Put $\widehat{Y}^o := Y^o \times_Y \widehat{Y} = Y^o \times_{Y'} \widehat{Y}'$.

Then, by Theorem \ref{Kal08 thm1.6}, there exist tilting bundles $\widehat{\EE}$ and $\widehat{\EE'}$ on $\widehat{Y}$ and $\widehat{Y}'$, respectively, 
such that
%$\widehat{\EE}|_{\hat{Y}^o} \simeq \widehat{\EE'}|_{\hat{Y}^o}$.
%Thus we have an isomorphism
$\widehat{\phi}_* \widehat{\EE} \simeq \widehat{\phi'}_* \widehat{\EE'}$ as $\widehat{R}$-modules
and $\End_{\widehat{Y}}(\widehat{\EE}) \simeq \End_{\widehat{Y}'}(\widehat{\EE'})$ as $\widehat{R}$-algebras.

By Theorem \ref{Kal08 thm1.8}, there exist tilting bundles $\EE$ and $\EE'$ on $Y$ and $Y'$, respectively,
such that $\EE \otimes_R \widehat{R} \simeq \widehat{\EE}$ and $\EE' \otimes_R \widehat{R} \simeq \widehat{\EE'}$.
Since $\phi_* \EE \otimes_R \widehat{R} \simeq \widehat{\phi}_* \widehat{\EE}$ and $\phi'_* \EE' \otimes_R \widehat{R} \simeq \widehat{\phi'}_* \widehat{\EE'}$,
Lemma \ref{Kal08 lem5.3} and the similar argument as in Step 3 of the proof of Theorem \ref{main thm equiv} imply that we have
\[ \phi_* \EE \simeq \phi'_* \EE'. \]
%and hence we have $\EE|_{Y^o} \simeq \EE'|_{Y^o}$.
Thus we have a strict tilting-type equivalence between $\D(Y)$ and $\D(Y')$.
\end{proof}

\end{document}